\newcommand{\figref}[1]{Fig.~\ref{fig:#1}}
\newcommand{\lemref}[1]{Lemma~\ref{lem:#1}}
\newcommand{\exref}[1]{Example~\ref{ex:#1}}
\newcommand{\eqnref}[1]{Eq.~\eqref{eqn:#1}}
\newcommand{\secref}[1]{Section~\ref{sec:#1}}
\newcommand{\ssecref}[1]{Section~\ref{ssec:#1}}
\newcommand{\subfigref}[2]{Fig.~\ref{fig:#1}\subref{subfig:#1-#2}}
\newcommand{\thmref}[1]{Theorem~\ref{thm:#1}}
\newcommand{\defref}[1]{Definition~\ref{def:#1}}
\newcommand{\corref}[1]{Corollary~\ref{cor:#1}}
\newcommand{\stmtref}[1]{Statement~\emph{\ref{stmt:#1}}} % used within proofs
\newcommand{\defstmtref}[2]{\stmtref{#1-#2} of \defref{#1}}
\newcommand{\lemstmtref}[2]{\stmtref{#1-#2} of \lemref{#1}}
\DeclareMathOperator{\lkmap}{lk}
\DeclareMathOperator{\reach}{reach}
\DeclareMathOperator{\vertmap}{verts}
\newcommand{\cubes}[1]{\mathcal{#1}}
\newcommand{\clcubes}[1]{\overline{\cubes{#1}}}
\newcommand{\di}[1]{\dipaths{#1}{}{}}
\newcommand{\dipaths}[3]{\overrightarrow{P}_{#2}^{#3}(#1)}
\newcommand{\downset}[2]{#1_{\preceq#2}}
\renewcommand{\int}{\textsc{int}}
\newcommand{\Khat}{{\widehat{K}}}
\newcommand{\pastlk}[2]{\lkmap^{-}_{#1}(#2)}
\newcommand{\reachcplx}[2]{\reach(#1,#2)}
\newcommand{\upset}[2]{#1_{#2\preceq}}
\newcommand{\verts}[1]{{\vertmap({#1})}}
\newcommand{\vecj}{\mathbf{j}}
\newcommand{\veck}{\mathbf{k}}
\newcommand{\vecp}{\mathbf{p}}
\newcommand{\vecq}{\mathbf{q}}
\newcommand{\vecv}{\mathbf{v}}
\newcommand{\vecw}{\mathbf{w}}
\newcommand{\vecx}{\mathbf{x}}
\newcommand{\vecy}{\mathbf{y}}
\newcommand{\zero}{{\mathbf{0}}} % in R^n
\newcommand{\setzero}{\{ \zero \}} %
\newcommand{\one}{\mathbf{1}} % in R^n
\newcommand{\binone}{\mathbf{1}} % in \{0,1\}^n
\newcommand{\binzero}{\mathbf{0}} % in \{0,1\}^n
\newcommand{\B}{\mathbb{B}}
\newcommand{\R}{\mathbb{R}}
\renewcommand{\S}{\mathbb{S}}
\newcommand{\Z}{\mathbb{Z}}
\theoremstyle{definition}
\newtheorem{theorem}{Theorem}
\newtheorem{definition}{Definition}
\newtheorem{remark}{Remark}
\newtheorem{lemma}{Lemma}
\newtheorem{corollary}{Corollary}
\newtheorem{example}{Example}
\newtheorem{acknowledgement}{Acknowledgement}
\begin{document}

\title{Combinatorial Conditions for Directed Collapsing}
% Use \titlerunning{Short Title} for an abbreviated version of
% your contribution title if the original one is too long
\author{Robin Belton\footnote{ Robin Belton, Montana State University, Bozeman, MT, USA,
        robin.belton@montana.edu},
       Robyn Brooks\footnote{ Robyn Brooks, Boston College, Chestnut Hill, MA, USA,
        robyn.brooks@bc.edu},
       Stefania Ebli\footnote{ Stefania Ebli, \'{E}cole Polytechnique F\'{e}d\'{e}ral de Laussane, Lausanne, Switzerland,
        stefania.ebli@epfl.ch},
       Lisbeth Fajstrup\footnote{ Lisbeth Fajstrup, Aalborg University, Aalborg, Denmark,
        fajstrup@math.aau.dk},\\
       Brittany Terese Fasy\footnote{ Brittany Terese Fasy, Montana State University Bozeman, MT, USA,
        brittany.fasy@montana.edu},
       Nicole Sanderson\footnote{ Nicole Sanderson, Lawrence Berkeley National Laboratory, Berkeley, CA, USA, nzs5677@psu.edu},
       and Elizabeth Vidaurre\footnote{ Elizabeth Vidaurre, Molloy College, Rockville Centre, NY, USA,
        evidaurre@molloy.edu}}
% \authorrunning{Belton et al.}
% Use \authorrunning{Short Title} for an abbreviated version of
% your contribution title if the original one is too long

% Use the package "url.sty" to avoid
% problems with special characters
% used in your e-mail or web address
%

\maketitle

%%%%%%%%%%%%%%%%%%%%%%%%%%%%%%%%%%%%%%%%%%%%%%%%%%%%%%%%%%%%%%%%%%%%%%%%%%%%%%%%%%%%%%%%%

\abstract{
The purpose of this article is to study directed collapsibility of directed Euclidean cubical complexes. One application of this is in the nontrivial task of verifying the execution of concurrent programs.
The classical definition of collapsibility involves certain conditions on a pair of cubes of the complex. 
The direction of the space can be taken into account by requiring that the past  links of vertices remain homotopy equivalent after collapsing. We call this type of collapse a \emph{link-preserving directed collapse}.
In this paper, we give combinatorially equivalent conditions for preserving the topology of the links, allowing for the implementation of an algorithm for collapsing a directed Euclidean cubical complex.
Furthermore, we give conditions for when link-preserving directed collapses preserve the contractability 
and connectedness of directed path spaces, as well as examples when link-preserving directed 
collapses do not preserve the number of connected components of the path space between the minimum
and a given vertex.
}

\section{Introduction}

A directed Euclidean cubical complex is a subset of $\R^n$ comprising a finite
union of directed unit cubes.  Directed paths (i.e., paths that are nondecreasing in
all coordinates) and spaces of directed paths are the objects of study in this
paper.
In particular, we address the question of how to simplify
directed Euclidean complexes without significantly changing the spaces of
directed~paths.

This model is motivated by several applications, where each axis of the model
corresponds to a parameter of the application (e.g., time).
In particular, Euclidean cubical complexes are used to model
concurreny in computer programming~\cite{dijkstra1977two,lisbeth,FGR,ziemianski2016execution}, hybrid
dynamical systems~\cite{wisniewski2006towards},
and motion planning~\cite{ghrist2010configuration}.
Consider the application to concurrency.  In this example, each
axis represents a sequence of actions a process completes in the program
execution. The complex itself corresponds to ``compatible'' parameters (i.e.,
when the processes can execute simultaneously).  Cubes missing from the complex
correspond to parameters for which the processes
cannot execute simultaneously for some reason, such as when they require the same
resources with limited capacity; see \figref{annulus-cube}.
A directed path (dipath) in the complex represents a, possibly
partial, program execution. Such executions are equivalent if the corresponding
dipaths are \emph{directed homotopic}.  Simplifying the complexes allows for a more compact
representation of the execution space, which, in turn, reduces the complexity
of validating correctness of concurrent programs.
\begin{figure}[th]
    \centering
    {\includegraphics[height=1.5in]{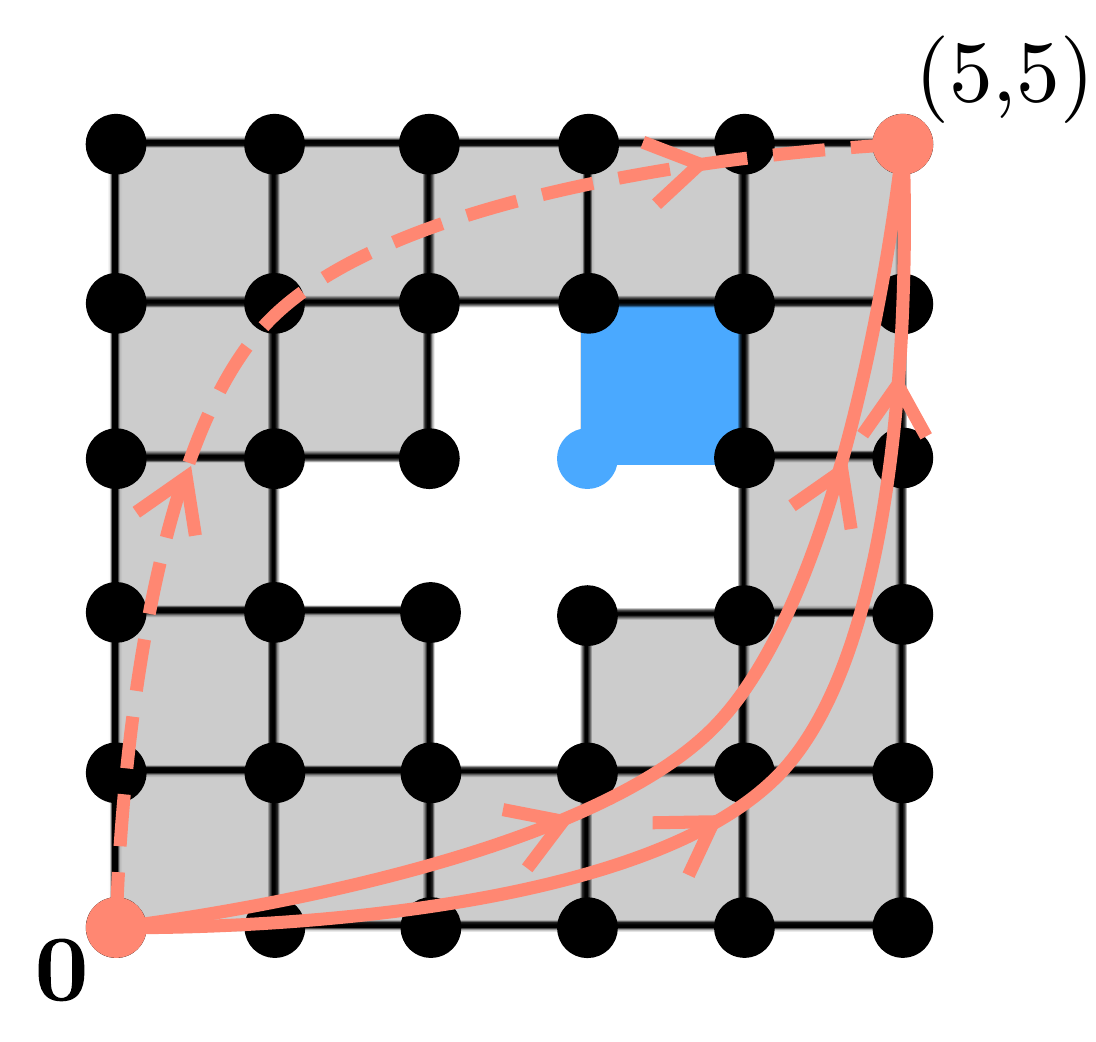}}
    \caption{The Swiss Flag and Three Directed Paths. The gray and blue
    squares are the two-cubes of a Euclidean cubical complex.
    The bi-monotone increasing
    paths are directed paths
    starting at $(0,0)$ and ending at~$(5,5)$.
    This complex has a cross-shaped hole in the middle.
    As a consequence, the solid directed paths are directed homotopic
    while the dashed directed path is not directed homotopic to either of the
    other directed paths. Each point highlighted in blue
    is \emph{unreachable}, meaning that we cannot reach any point highlighted in blue
    without breaking bi-monotonicity in a path starting at $(0,0)$.
    This complex models the dining philsophers problem, a well-known example
    in concurrency, where two processes require two shared
    resources with limited capacity~\cite{dijkstra1977two,hoare}.
    The two distinct
    paths (solid and dashed) represent which process uses both shared
    resources~first.
    }\label{fig:annulus-cube}
\end{figure}

A non-trivial Euclidean cubical complex
contains uncountably many dipaths and more information
than we need for understanding the topology of the spaces of dipaths. The main question
we ask is, \emph{How can we simplify a directed Euclidean cubical complex while still preserving spaces
of dipaths}?

\emph{Past links} are local representations of a Euclidean cubical complex at vertices. They were
introduced in~\cite{ziemianski2016execution} as a means to show that any finite homotopy
type can be realized as a connected component of the space of execution paths for some $PV$-model.
In~\cite{belton2020towards}, we found conditions for when the
local information of past links preserve the global information on the homotopy type of spaces
of dipaths. Because of these relationships between past links and dipath spaces,
we define collapsing in terms of past links.  We call this type of collapsing
\emph{link-preserving directed collapse} (LPDC). We aim to compress a Euclidean cubical
complex by LPDCs before attempting to answer questions about dipath spaces.

The main result of this paper is~\thmref{collapsingpairs},
which provides a simple criterion for such a
collapsing to be allowed:
\emph{A pair of cubes $(\tau,\sigma)$ is an LPDC pair  if and only if it is a
collapsing pair in the non-directed sense and~$\tau$ does not contain the
minimum vertex of $\sigma$.}
This condition greatly simplifies the definition of LPDC and  is easy to add to a collapsing
algorithm for Euclidean cubical complexes in the undirected setting. Algorithms and
implementations in this setting already exist such as in \cite{lachaud}. Furthermore, we provide
conditions for when LPDCs preserve the contractability and connectedness of dipath
spaces (\secref{pathspaces}) along with a discussion of some of the limitations (\secref{discussion}).
This work provides a start at the mathematical foundations for developing polynomial time
algorithms that collapse Euclidean cubical complexes and preserve dipath~spaces.

\section{Background}\label{sec:background}

This paper builds on our prior work~\cite{belton2020towards}, as well as work by
others~\cite{ziemianski2016execution,RZ,FGR,grandis2003directed,grandis2009diretced}.
In this section, we recall the definitions of directed Euclidean cubical
complexes, which are the objects that we study in this paper.
Then, we discuss the relationship between
spaces of directed paths and past links in directed Euclidean cubical complexes.
For additional background on directed topology (including generalizations of the
definitions below), we refer the reader to~\cite{lisbeth}.
We also assume the reader is familiar with the notion of homotopy equivalence of
topological spaces (denoted using $\simeq$ in this paper)
and homotopy between paths as presented in~\cite{hatcher}.

\subsection{Directed Spaces and Euclidean Cubical Complexes}

Let $n$ be a positive integer.
A \emph{(closed) elementary cube} in $\R^n$ is a product of closed intervals of the
following form:
\begin{equation}\label{eqn:elem-cube}
    [v_1-j_1,v_1] \times [v_2-j_2, v_2] \times
    \ldots \times [v_n-j_n, v_n]
    \subsetneq \R^n,
\end{equation}
where $\vecv=(v_1,v_2, \ldots, v_n) \in \Z^n$ and $\vecj=(j_1,j_2, \ldots, j_n) \in
\{0,1\}^n$.
We often refer to elementary cubes simply as \emph{cubes}.
The dimension of the cube is the number of unit entries in the vector $\vecj$;
specifically, the dimension of the cube in \eqnref{elem-cube}
is the sum:~$\sum_{i=1}^n j_i$. 

 In particular, when $\vecj=\zero := (0, 0,\ldots,0)$, the
elementary cube is a single point and often denoted using just~$\vecv$.
If~$\tau$ and~$\sigma$ are elementary cubes such that $\tau \subseteq \sigma$, we say
that $\tau$ is a face of~$\sigma$ and that~$\sigma$ is a coface of $\tau$.
Cubical sets were first introduced in the 1950s by Serre~\cite{serre1951homologie} in a more general
setting; see also~\cite{BrownOn81,jardine2002cubical,goubault1995geometry}.
\begin{figure}[th]
    \centering
    {\includegraphics[height=1.5in]{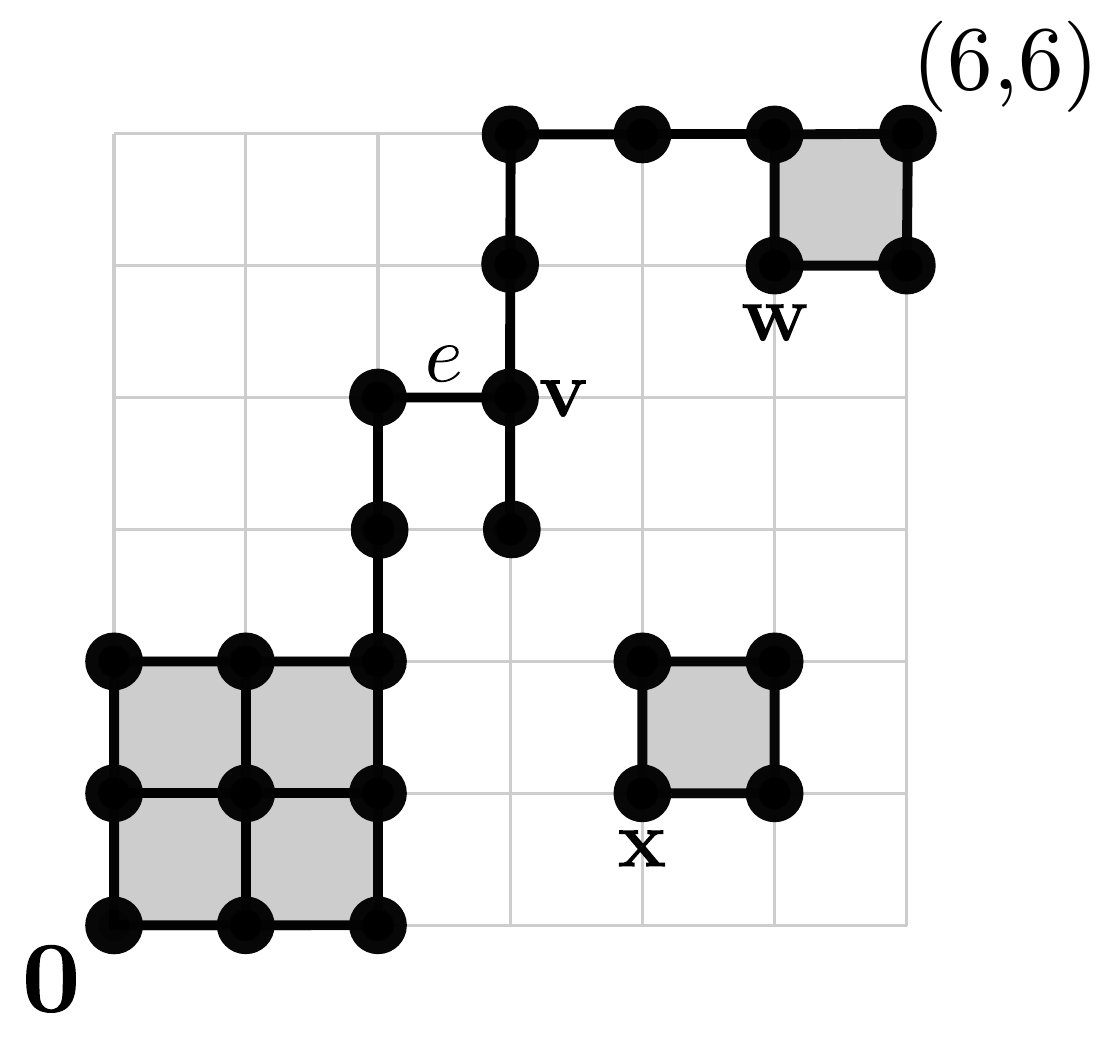}}
    \caption{Euclidean cubical complex in $\R^2$ with $24$
    zero-cubes (vertices),~$28$ one-cubes (edges),
    and six two-cubes (squares). By construction, all elementary cubes in a
    directed Euclidean cubical complex
    are axis aligned.  Consider the vertex $\vecv=(3,4)$.  The edge
    $e=[(2,4),(3,4)]$ (written $e=[2,3] \times [4,4]$ in the notation of
    \eqnref{elem-cube}) is one of the two lower cofaces of $\vecv$.  Since $e$
    is not a face of any two-cube, $e$ is a maximal cube (since it is not a face
    of a higher-dimensional cube).
    }\label{fig:ecc}
\end{figure}%

Elementary cubes stratify~$\R^n$, where two points~$x,y\in\R^n$ are in the same
stratum if and only if they are members of the same set of elementary cubes; we
call this the \emph{cubical stratification of~$\R^n$}. Each stratum in the
stratification is either an open cube or a single point.
A \emph{Euclidean cubical
complex}~$(K,\cubes{K})$ is a subspace~$K \subsetneq \R^n$ that is equal to
the  union of a finite set of elementary cubes, together with the stratification
$\cubes{K}$ induced
by the cubical stratification of~$\R^n$; see \figref{ecc}. 

 We topologize $K$
using the subspace topology with the standard topology on $\R^n$.
By construction, if $\sigma \in \cubes{K}$, then all of its faces are
necessarily in~$\cubes{K}$ as well.  If~$\sigma \in \cubes{K}$ with no proper
cofaces, then we say that $\sigma$ is a \emph{maximal cube} in~$K$.
We denote the set of closed cubes in $(K,\cubes{K})$ by $\clcubes{K}$; the set
of closed cubes in $\clcubes{K}$ is in one-to-one correspondence with the open
cubes in $\cubes{K}$. Specifically, vertices in~$\clcubes{K}$ correspond to
vertices in~$\cubes{K}$ and all other elementary cubes in~$\clcubes{K}$ correspond to their interiors in
$\cubes{K}$. Throughout this paper, we denote the set of zero-cubes in $\cubes{K}$ by
$\verts{K}$ and note that~$\verts{K} \subsetneq \Z^n$, since all cubes in
$(K,\cubes{K})$ are
elementary cubes.

The \emph{product order on $\R^n$}, denoted $\preceq$, is the partial order
such that for two points~$\vecp = (p_1,p_2, \ldots, p_n)$ and $\vecq=(q_1,q_2, \ldots, q_n)$
in~$\R^n$, we have that $\vecp \preceq \vecq$ if and only if $p_i\leq q_i$ for
each coordinate~$i$. Using this partial order, we define the interval of points
in $\R^n$ between $\vecp$ and $\vecq$ as
    $$[\vecp,\vecq]
    := \{\vecx \mid \vecp\preceq \vecx
    \preceq \vecq \}.$$
The point $\vecp$ is the minimum vertex of the interval
and $\vecq$ is the maximum vertex
of the interval, with respect to $\preceq$.  Notationally, we write this as~$\min([\vecp,\vecq]):=\vecp$
and~$\max([\vecp,\vecq]):=\vecq$.
When $\vecq \in \Z^n$ and~$\vecq=\vecp+\vecj$, for some~$\vecj \in \{0,1\}^n$,
the interval~$[\vecp,\vecq]$
is an elementary cube as defined in \eqnref{elem-cube}.
If, in addition, $\vecj$ is not the zero vector,
then we say that~$[\vecv-\vecj,\vecv]$ is a \emph{lower coface} of $\vecv$.

Using the fact that the partial order $(\R^n,\preceq)$ induces a partial order on the
points in $K$, we define directed paths in $K$ as the set of
nondecreasing paths in $K$:
A \emph{path} in $K$ is a continuous map from $I=[0,1]$ to~$K$, where $I$ is the unit interval. We say that a path $\gamma \colon I \to K$ goes
from $\gamma(0)$ to~$\gamma(1)$.
Letting $K^I$ denote the set of all paths in $K$,
the set of \emph{directed paths} (or \emph{dipaths} for short) is
$$
    \di{K} := \{ \gamma \in K^I ~|~ \forall i, j \text{ s.t.\ }
    0 \leq i \leq j \leq 1, \gamma(i) \preceq \gamma(j) \}.
$$
We topologize $\di{K}$ using the compact-open topology.  For $\vecp,\vecq \in K$, we
denote the subspace of dipaths from~$\vecp$ to $\vecq$
by~$\dipaths{K}{\vecp}{\vecq}$.
We refer to $(K, \di{K})$ as a \emph{directed Euclidean cubical
complex}.\footnote{Directed Euclidean cubical complexes are an example of a more general
concept known as \emph{directed space} (d-spaces).  To define a d-space, we have a
topological space $X$ and we define a set
of dipaths $P'(X) \subseteq X^I$ that  contains all constant paths,
and is closed under taking nondecreasing
reparameterizations, concatenations, and
subpaths. Indeed, $\di{K}$ satisfies these properties.}
The connected components of $\dipaths{K}{\vecp}{\vecq}$ are exactly the
equivalence classes of dipaths, up to dihomotopy.
If two dipaths,~$f$ and~$g$ are homotopic through a continuous family
of dipaths, then~$f$ and~$g$ are called~\emph{dihomotopic}.

Given a directed complex, certain subcomplexes are of~interest:
\begin{definition}[Special Complexes]\label{def:subcomplexes}
    Let $(K,\cubes{K})$ be a directed Euclidean cubical complex in $\R^n$.
    Let $\vecp \in \verts{K}$ and let $\sigma$ be an elementary cube (that need not be in
    $\cubes{K}$).
    \begin{enumerate}
        \item The complex above $\vecp$ is $\upset{K}{\vecp} := \{
                \vecq \in K \mid \vecp \preceq \vecq\}$.
        \item The complex below $\vecp$ is $\downset{K}{\vecp} := \{
                \vecq \in K \mid \vecq \preceq \vecp\}$.
        \item The reachable complex from $\vecp$ is $\reachcplx{K}{\vecp} := \{
                \vecq \in K \mid \dipaths{K}{\vecp}{\vecq}
                \neq \emptyset \}$.\label{stmt:subcomplexes-reachable}
        \item The complex restricted to $\sigma$ is
            $$K|_{\sigma} := \bigcup \{ \tau \in \cubes{K}
                \mid \min{\sigma} \preceq \min{\tau} \preceq
                    \max{\tau} \preceq \max{\sigma} \}.$$
        \item If $K=I^n$, then we call $(K, \cubes{K})$ the
                    \emph{standard unit cubical complex} and often denote it by
                    $(I^n, \cubes{I})$.  If $K = I^n + \vecx$ for some $\vecx
                    \in \Z^n$, then $K$ is a full-dimensional unit cubical complex.
    \end{enumerate}
\end{definition}

\subsection{Past Links of Directed Cubical Compelxes}
An \emph{abstract simplicial complex} is a finite collection $\mathcal{S}$ of sets that is closed under
the subset relation, i.e., if $A\in \mathcal{S}$ and $B$ is a set such that $\emptyset
\neq B \subseteq A$, then~$B\in \mathcal{S}$. The sets in $\mathcal{S}$ are called \emph{simplices}.
If the simplex~$A$ has $k+1$ elements, then we say that
the dimension of~$A$ is $\dim(A):=k$, and we say that $A$ is a $k$-simplex.
For example, the zero-simplices are the singleton sets and are often referred to
as vertices. Since every element of a set $A$ in $\mathcal{S}$ gives rise to a singleton set in the finite set $\mathcal{S}$, $A$ must be finite.

In a topological space embedded in $\R^n$, the
link of a point $\vecv$ is constructed by intersecting an arbitrarily small
$(n-1)$-sphere around $\vecv$ with the space itself.
In $\R^n$, the link of a point is
an~$(n-1)$-sphere. Moreover, if ~$\vecv \in \Z^n$, the link inherits the stratification as a subcomplex
of~$\R^n$, and can be represented as a simplicial complex whose~$i$-simplices are
in one-to-one correspondence with the~$(i+1)$-dimensional cofaces of $\vecv$.
The past link of $\vecv$ is the restriction of the link
using the set of lower cofaces of $\vecv$ instead of all cofaces.
Thus, we can represent each simplex in the past link as a
vector in~$\{0,1\}^n \setminus \setzero$, where the vector $\vecj \in \{0,1\}^n
\setminus \setzero$
represents the cube~$[\vecv-\vecj,\vecv]$ in the simplex-cube
correspondence.
As a simplicial complex, the past link of $\vecv$ in~$\R^n$ has~$n$
vertices~$\{ x_i \}_{1 \leq i \leq n}$, and~$\vecj$
represents the simplex~$\{ x_i | 1 \leq i \leq n, j_i=1\}$ of dimension
$||\vecj||_1-1$; for example, $(1,0,0)$ represents a vertex and $(1,0,1)$ represents an edge.
We are now ready to define the past link of a vertex in a Euclidean cubical
complex:
\begin{definition}[Past Link]\label{def:pl}
    Let $(K,\cubes{K})$ be a directed Euclidean cubical complex in $\R^n$.
    Let $\vecv \in \Z^n$.
    The
    \emph{past link} of $\vecv$ is the following simplicial complex:
    $$
    \pastlk{K}{\vecv} := \{
        \vecj \in \{0,1\}^n \setminus \setzero  \mid
        [\vecv-\vecj,\vecv] \subseteq K
    \}.$$
\end{definition}
As a set, the past link represents all
elementary cubes in $K$ for which the maximum vertex is $\vecv$.
As a simplicial complex, it describes (locally) the different types of dipaths
to or through $\vecv$ in $K$; see \figref{box-pastlink}.
\begin{figure}%
    \centering
    \subfloat[Past Link of $\vecv$\label{subfig:box-pastlink-v}]{%
        \includegraphics[height=1in]{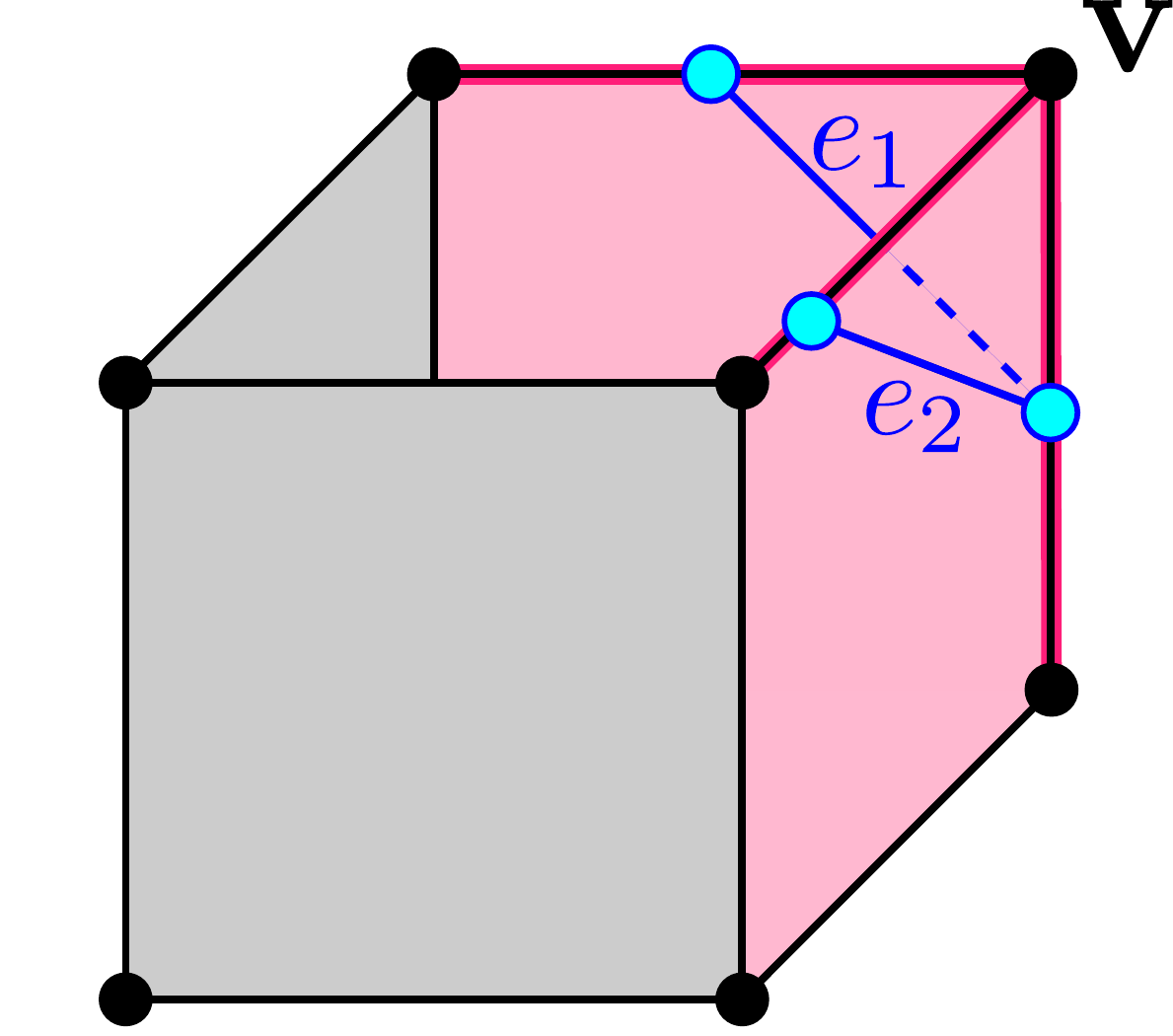}%
    }\hfil
    \subfloat[Past Link of $\vecx$\label{subfig:box-pastlink-x}]{%
        \includegraphics[height=1in]{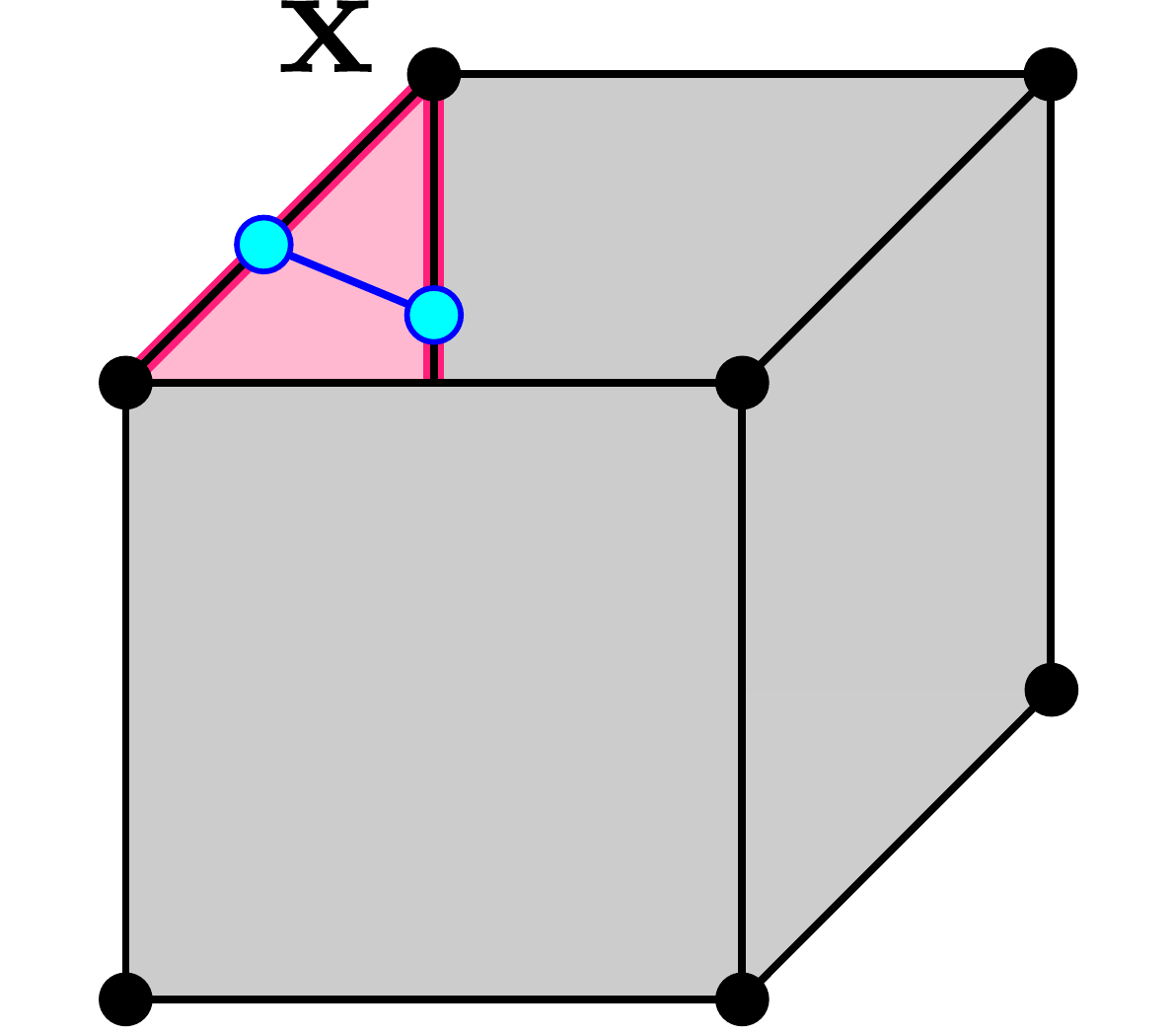}%
    }
    \caption{Past link in the Open Top Box.
    \protect\subref{subfig:box-pastlink-v}
    The maximum vertex of this complex is $\vecv = (v_1, v_2, v_3)$.
    The past link $\pastlk{K}{\vecv}$ is the simplicial complex
    comprising three vertices and two edges
    (shown in blue/cyan).
    These simplices are in one-to-one correspondence with the set of lower cofaces
    of~$\vecv$ (highlighted in
    pink).
    For example, the edges of
    $\pastlk{K}{\vecv}$, which are labeled~$e_1$ and~$e_2$,
    are in one-to-one correspondence with the
    elementary two-cubes that are lower cofaces of $\vecv$
    ($\sigma_1= [(v_1-1,v_2,v_3-1),\vecv]$
    and~$\sigma_2=[(v_1,v_2-1,v_3-1),\vecv]$, respectively).
    In the vector notation for simplices of~$\pastlk{K}{\vecv}$,
    we write~$e_1=(1,0,1)$ and $e_2=(0,1,1)$.
    \protect\subref{subfig:box-pastlink-x}
    The past link of a vertex $\vecx$ that is neither the minimum nor the
    maximum vertex in the~complex.
    \label{fig:box-pastlink}}
\end{figure}

We conclude
this section with a lemma summarizing properties of the past link, most of which
follow directly from definitions:

\begin{lemma}[Properties of Past Links]\label{lem:plprops}
    Let $(K,\cubes{K})$ be a directed Euclidean cubical complex in $\R^n$.
    Then, the following statements hold for all $\vecv \in \Z^n$:
    \begin{enumerate}
        \item $\pastlk{K}{\vecv} = \bigcup_{\vecp \in \R^n}
            \pastlk{\upset{K}{\vecp}}{\vecv}$.\label{stmt:plprops-bigcup}
        \item If $(K', \cubes{K'})$ is a subcomplex of $(K, \cubes{K})$, then
            $\pastlk{K'}{\vecv} \subseteq
            \pastlk{K}{\vecv}$.\label{stmt:plprops-subcomplex}
        \item $\pastlk{K}{\vecv} =
            \pastlk{\downset{K}{\vecv}}{\vecv}$.\label{stmt:plprops-upset}
        \item If there exists
            $\vecw \in \Z^n$ such that $K=[\vecw-\binone,\vecw]$, then
            $\pastlk{K}{\vecw}$ is the complete simplicial complex on $n$
            vertices.\label{stmt:plprops-full}
        \item $\pastlk{K}{\vecv}$ is a subcomplex of the complete simplicial
            complex on $n$ vertices.\label{stmt:plprops-subsimplex}
  
    \end{enumerate}
\end{lemma}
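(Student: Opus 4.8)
The plan is to reduce every statement to the cube-containment characterization $\vecj \in \pastlk{K}{\vecv} \iff [\vecv-\vecj,\vecv] \subseteq K$ coming from \defref{pl}, and to handle the statements in order of logical dependence rather than as listed, since the monotonicity statement \stmtref{plprops-subcomplex} underlies two of the others.

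I would first prove \stmtref{plprops-subcomplex}: if $K' \subseteq K$ and $[\vecv-\vecj,\vecv] \subseteq K'$, then $[\vecv-\vecj,\vecv] \subseteq K$, so $\pastlk{K'}{\vecv} \subseteq \pastlk{K}{\vecv}$. The same argument in fact shows monotonicity of the past link for arbitrary nested subsets of $\R^n$, not only cubical subcomplexes, which is what I will use in \stmtref{plprops-bigcup}. For \stmtref{plprops-upset}, one inclusion is monotonicity applied to the subcomplex $\downset{K}{\vecv} \subseteq K$; for the reverse, I observe that every point of $[\vecv-\vecj,\vecv]$ is $\preceq \vecv$, so any such cube lying in $K$ already lies in $\downset{K}{\vecv}$. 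For \stmtref{plprops-bigcup}, set-monotonicity gives $\pastlk{\upset{K}{\vecp}}{\vecv} \subseteq \pastlk{K}{\vecv}$ for every $\vecp$, and for the reverse inclusion I supply, for each $\vecj \in \pastlk{K}{\vecv}$, the lattice witness $\vecp = \vecv - \vecj = \min([\vecv-\vecj,\vecv])$: every point of the cube dominates $\vecp$, so the cube lies in $\upset{K}{\vecp}$ and $\vecj$ appears in that term of the union.

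The computation \stmtref{plprops-full} is direct: when $K = [\vecw-\binone,\vecw]$, every $\vecj \in \{0,1\}^n \setminus \setzero$ satisfies $\vecw - \binone \preceq \vecw - \vecj$, hence $[\vecw-\vecj,\vecw] \subseteq K$ and every nonempty vertex subset is a simplex, which is exactly the complete complex on $n$ vertices.

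The step I expect to require the most care is \stmtref{plprops-subsimplex}, as it is the only one asserting that $\pastlk{K}{\vecv}$ is a bona fide simplicial complex rather than a mere collection of vectors. Containment in the complete complex on $n$ vertices is formal, since each simplex is a vector in $\{0,1\}^n \setminus \setzero$. The substantive point is downward closure: given $\vecj \in \pastlk{K}{\vecv}$ and $\zero \neq \vecj' \preceq \vecj$, I must show $\vecj' \in \pastlk{K}{\vecv}$. Here $\vecj' \preceq \vecj$ makes $[\vecv-\vecj',\vecv]$ a face of $[\vecv-\vecj,\vecv]$, and I would invoke the defining property that a Euclidean cubical complex contains all faces of its cubes to conclude $[\vecv-\vecj',\vecv] \subseteq K$. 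This is the one place where the structural face-closure hypothesis on $K$, rather than the bare definition of the past link, carries the argument.
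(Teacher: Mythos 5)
Your proposal is correct, and on \stmtref{plprops-subcomplex}, \stmtref{plprops-upset}, and \stmtref{plprops-full} it coincides with the paper's argument (monotonicity of cube containment, monotonicity plus the observation that $[\vecv-\vecj,\vecv]\subseteq \downset{K}{\vecv}$, and the direct computation for the full cube). You diverge in two places, both defensibly. For \stmtref{plprops-bigcup}, the paper handles the reverse inclusion by first disposing of the case $K=\emptyset$ and then choosing a single global point $\vecq$ with $\vecq\preceq\vecw$ for every $\vecw\in\verts{K}$ (using finiteness of $\verts{K}$), so that every $\vecj\in\pastlk{K}{\vecv}$ lands in the one term $\pastlk{\upset{K}{\vecq}}{\vecv}$; your per-element witness $\vecp=\vecv-\vecj$ is more local, avoids the emptiness/finiteness discussion entirely, and is equally valid since $[\vecv-\vecj,\vecv]\subseteq\upset{K}{(\vecv-\vecj)}$. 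Your remark that monotonicity holds for arbitrary nested subsets (not just subcomplexes) is a useful point the paper leaves implicit when it applies past links to upsets. For \stmtref{plprops-subsimplex}, the paper sandwiches: it sets $L=K\cap[\vecv-\binone,\vecv]$, notes $\pastlk{L}{\vecv}=\pastlk{K}{\vecv}$, and applies \stmtref{plprops-subcomplex} and \stmtref{plprops-full} to get containment in the complete complex, leaving the downward-closedness of $\pastlk{K}{\vecv}$ implicit in the definition; you instead verify closure under taking nonzero $\vecj'\preceq\vecj$ directly, which is the more careful reading of ``subcomplex.'' Note that your appeal to face-closure of $K$ is not even needed there, since $[\vecv-\vecj',\vecv]\subseteq[\vecv-\vecj,\vecv]\subseteq K$ already gives the required containment under \defref{pl}; so your argument is, if anything, slightly stronger than necessary, and the paper's sandwich argument buys nothing you don't already get from the containment plus your closure check.
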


\begin{proof}
    \stmtref{plprops-bigcup}:
    If $K = \emptyset$, then all past links are empty and the equality trivially
    holds.
    If $K \neq \emptyset$, then $\verts{K}$ is a finite non empty set.  Thus, there
    exists~$\vecq \in \R^n$ such that for all $\vecw \in \verts{K}$,  $\vecq
    \preceq \vecw$.
    
    Let $\vecj \in \pastlk{K}{\vecv}$.  Then,~$[\vecv-\vecj,\vecv] \subseteq {K}$
    and so $\vecv-\vecj \in \verts{K}$.  Hence,~$\vecq \preceq
    \vecv-\vecj$, which means that~$\vecj \in \pastlk{\upset{K}{\vecq}}{\vecv} \subseteq
    \bigcup_{\vecp \in \R^n} \pastlk{\upset{K}{\vecp}}{\vecv}$.  The reverse
    inclusion follows from the fact that each of these statements holds if and
    only~if.

    \stmtref{plprops-subcomplex}: Observe that if $\vecj \in
    \pastlk{K'}{\vecv} $, then, by definition of the
    past link,~$[\vecv-\vecj,\vecv] \subseteq {K'}$.
    Since $K'\subseteq K$,
    we have $[\vecv-\vecj,\vecv] \subseteq {K'}\subseteq {K}$.
    Therefore, we can conclude that $\vecj  \in \pastlk{K}{\vecv}$.

    \stmtref{plprops-upset}: By \stmtref{plprops-subcomplex} (which we just
    proved),
    we have the following
    inclusion $\pastlk{\downset{K}{\vecv}}{\vecv}\subseteq \pastlk{K}{\vecv}$.
    To prove the inclusion in the other direction, let
    $\vecj \in \pastlk{K}{\vecv}$. Since~$\vecv-\vecj \preceq \vecv$, then
    $[\vecv-\vecj,\vecv] \subseteq {\downset{K}{\vecv}}$. Therefore, we
    conclude that~$\pastlk{K}{\vecv} \subseteq
    \pastlk{\downset{K}{\vecv}}{\vecv}$.

    \stmtref{plprops-full}: Since
    $K=[\vecw-\binone,\vecw]$, we know that
    $K$ is full-dimensional, and so for all $\vecj \in
    \{0,1\}^n$, $[\vecw-\vecj,\vecw] \subseteq K$.  Thus, by
    definition of past link, we have that the past link of $\vecw$ is:
    $\pastlk{K}{\vecw} := \{0,1\}^n \setminus \setzero$, which is the complete
    simplicial complex on $n$ vertices.

    \stmtref{plprops-subsimplex}:
    Let $L=K\cap[\vecv-\binone,\vecv]$.
    By definition of past link, we know~$\pastlk{L}{\vecv} = \pastlk{K}{\vecv}$.
    By \stmtref{plprops-subcomplex}, since
    $L$ is a subcomplex of the cube $[\vecv-\binone,\vecv]$,
    we know~$\pastlk{L}{\vecv} \subseteq \pastlk{[\vecv-\binone,\vecv]}{\vecv}$.
    By \stmtref{plprops-full}, the past link $\pastlk{[\vecv-\binone,\vecv]}{\vecv}$ is the complete simplicial
    complex on $n$ vertices.
    Therefore, $\pastlk{K}{\vecw}$ is the complete simplicial complex on $n$~vertices.
\end{proof}

\subsection{Relationship Between Past Links and Path Spaces}\label{ssec:prior}
The topology of the past links is intrinsically related to spaces
of dipaths. Specifically, in~\cite{belton2020towards} we prove that
the contractability and/or connectedness of past links of vertices in directed
Euclidean cubical complexes
with a minimum vertex\footnote{In~\cite{belton2020towards}, the minimum
(initial) vertex
was often assumed to be $\zero$ for ease of exposition.  We restate the lemmas and
theorems here using more general notation, where $K$ has a minimum vertex~$\vecw$.}
implies that all spaces of dipaths with $\vecw$ as initial point are also contractible and/or connected.

\begin{theorem}[{Contractability~\cite[Theorem
    1]{belton2020towards}}]\label{thm:partial-contractability}
    Let $(K,\cubes{K})$ be a directed Euclidean cubical complex in $\R^n$
    that has a minimum vertex $\vecw$.
    If, for all vertices~$\vecv \in \verts{K}$,
    the past link $\pastlk{K}{\vecv}$  is contractible, then
    the space~$\dipaths{K}{\vecw}{\veck}$ is contractible for all $\veck \in
    \verts{K}$.
\end{theorem}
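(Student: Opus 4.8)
The plan is to prove \thmref{partial-contractability} by induction on the $\ell^1$-distance $\|\veck - \vecw\|_1$ from the minimum vertex, converting the global question about $\dipaths{K}{\vecw}{\veck}$ into the local data recorded by the past link $\pastlk{K}{\veck}$. First I would reduce to a finite subcomplex: every dipath from $\vecw$ to $\veck$ is nondecreasing and squeezed between $\vecw$ and $\veck$, so it lies in $\downset{K}{\veck}$, giving $\dipaths{K}{\vecw}{\veck} = \dipaths{\downset{K}{\veck}}{\vecw}{\veck}$, and the past links at vertices $\preceq \veck$ are unaffected by this restriction (cf.\ \lemstmtref{plprops}{upset}). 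The base case $\veck = \vecw$ is immediate, since the only nondecreasing path from $\vecw$ to $\vecw$ is constant and the dipath space is a single point.

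For the inductive step, fix $\veck \neq \vecw$ and assume $\dipaths{K}{\vecw}{\vecv}$ is contractible (hence nonempty, i.e.\ $\vecv$ is reachable from $\vecw$) for every vertex $\vecv$ with $\vecw \preceq \vecv \prec \veck$. I would then decompose $\dipaths{K}{\vecw}{\veck}$ according to how a dipath makes its \emph{final approach} to $\veck$. Writing $L := \pastlk{K}{\veck}$, each simplex $\vecj \in L$ corresponds to a cube $[\veck - \vecj, \veck] \subseteq K$, and I would cover $\dipaths{K}{\vecw}{\veck}$ by open sets $U_i$ indexed by the vertices $i$ of $L$ (the directions $\vecei$ with $[\veck-\vecei,\veck]\subseteq K$), where $U_i$ consists of those dipaths whose final segment into $\veck$ moves in coordinate $i$. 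The crucial geometric lemma I must establish is that an intersection $U_{i_0} \cap \cdots \cap U_{i_m}$ is nonempty exactly when $\{i_0,\dots,i_m\}$ is a simplex $\vecj$ of $L$, and that in that case the intersection deformation retracts onto $\dipaths{K}{\vecw}{\veck-\vecj}$ by contracting the in-cube tail of each dipath to the straight diagonal of $[\veck-\vecj,\veck]$. Because $\vecw \preceq \veck-\vecj \prec \veck$ (the left inequality holds since $\vecw$ is the minimum vertex) and $\veck-\vecj \in \verts{K}$, the inductive hypothesis makes every such intersection contractible and nonempty; in particular reachability of the corners is automatic, so no simplex of $L$ is dropped from the cover.

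With this cover in hand, the nerve lemma identifies $\dipaths{K}{\vecw}{\veck}$ up to homotopy with the nerve of the cover, which by the intersection pattern above is precisely the geometric realization $|L| = |\pastlk{K}{\veck}|$. Since $\pastlk{K}{\veck}$ is contractible by hypothesis, this shows $\dipaths{K}{\vecw}{\veck}$ is contractible and closes the induction. Equivalently, one can package the same decomposition as a homotopy colimit over the face poset of $L$ of the diagram $\vecj \mapsto \dipaths{K}{\vecw}{\veck-\vecj}$; a homotopy colimit of objectwise contractible spaces is homotopy equivalent to the classifying space of the indexing poset, which is again $|L|$.

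I expect the homological bookkeeping to be routine and the geometric decomposition lemma to be the main obstacle. Making the ``final approach'' precise requires a continuous rule for reparametrizing a dipath by its last exit from the union of lower cofaces of $\veck$, verifying that the resulting sets $U_i$ are genuinely open in the compact-open topology, and constructing the deformation retractions onto $\dipaths{K}{\vecw}{\veck-\vecj}$ coherently across overlaps so that the contractible-intersection hypotheses of the nerve lemma truly hold. Once this local straightening is in place, contractibility of the past link is exactly the input needed to trivialize the nerve.
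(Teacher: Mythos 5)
The paper does not prove this theorem itself: it is quoted from~\cite{belton2020towards}, and the proof there is an induction over the vertices of $K$ in which the local step is supplied by Ziemia\'{n}ski's Proposition~5.3, restated in this paper as \lemref{homotopy-equivalence}: once all $\dipaths{K}{\vecw}{\veck-\vecj}$, $\vecj\in\pastlk{K}{\veck}$, are known to be contractible, that lemma gives $\dipaths{K}{\vecw}{\veck}\simeq\pastlk{\upset{K}{\vecw}}{\veck}=\pastlk{K}{\veck}$, which is contractible by hypothesis. Your induction skeleton (base case $\veck=\vecw$, induction on $\|\veck-\vecw\|_1$, reachability of the corners $\veck-\vecj$ coming for free from contractibility at smaller distance, and the final identification of the dipath space with the past link) is exactly this argument; where you genuinely differ is that you propose to reprove the local lemma from scratch by an open cover and the nerve lemma, i.e.\ the ``main obstacle'' you defer is precisely the content of the cited proposition. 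That is a legitimate route, but as sketched it has a concrete problem: the sets $U_i$ of dipaths ``whose final segment into $\veck$ moves in coordinate $i$'' are not open in the compact-open topology --- the condition that coordinate $i$ stays strictly below $k_i$ until $t=1$ is destroyed by arbitrarily small perturbations that hit $k_i$ early and then stay constant, and the alternative ``tail eventually contained in $[\veck-\vecj,\veck]$'' sets are neither open nor closed --- so the open-cover nerve lemma does not apply directly, and you would also need to check coherence of the tail-straightening retractions across overlaps. The rigorous version of your decomposition is your own parenthetical alternative: express $\dipaths{K}{\vecw}{\veck}$ as a (homotopy) colimit over the face poset of $\pastlk{K}{\veck}$ of the spaces $\dipaths{K}{\vecw}{\veck-\vecj}$, which is how \cite{ziemianski2016execution} proceeds; objectwise contractibility then identifies the path space with the order complex of the poset, i.e.\ with $|\pastlk{K}{\veck}|$. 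So either carry out that homotopy-colimit argument carefully, or simply invoke \lemref{homotopy-equivalence} as the paper does, after which your induction closes the proof immediately.
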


An analogous theorem for connectedness also holds.

\begin{theorem}[{Connectedness~\cite[Theorem 2]{belton2020towards}}]\label{thm:connected}
    Let $(K,\cubes{K})$ be a directed Euclidean cubical complex in $\R^n$
    that has a minimum vertex $\vecw$.
    Suppose that, for all $\vecv \in \verts{K}$, the past link $\pastlk{K}{\vecv}$ is connected. Then,
    for all $\veck \in \verts{K}$,
    the space~$\dipaths{K}{\vecw}{\veck}$ is connected.

%\footnote{A slightly
    %weaker version appears in~\cite{belton2020towards}, where the conclusion of
    %the theorem is proven for
    %$\vecv \in \verts{K}$.  However, if $\vecv \in \Z^n \setminus \verts{K}$,
    %then the path space $\dipaths{K}{\vecw}{\vecv}$ is empty and hence
    %connected.  Note that the empty space is not contractible (by the usual
    %convention), so we do not generalize \thmref{partial-contractability} in
    %this way.}
\end{theorem}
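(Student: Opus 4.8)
The plan is to induct on $d(\veck):=\|\veck-\vecw\|_1=\sum_{j}(k_j-w_j)$, which is a nonnegative integer because the minimum vertex satisfies $\vecw\preceq\veck$. Before starting, I observe that under the hypotheses every vertex is reachable from $\vecw$: if $\veck\neq\vecw$, then $\pastlk{K}{\veck}$ is connected and hence nonempty, so some coordinate direction $i$ has $\veck-\mathbf{e}_i\in\verts{K}$ joined to $\veck$ by an edge of $K$; since $d(\veck-\mathbf{e}_i)=d(\veck)-1$, a smaller instance yields a dipath from $\vecw$ to $\veck-\mathbf{e}_i$, which we extend along this edge to $\veck$. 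In particular $\dipaths{K}{\vecw}{\veck}$ is always nonempty. For the base case $d(\veck)=0$ we have $\veck=\vecw$, and since dipaths are coordinatewise nondecreasing the only dipath from $\vecw$ to itself is constant, so $\dipaths{K}{\vecw}{\vecw}$ is a single point and connected. For the inductive step I assume $\dipaths{K}{\vecw}{\mathbf{u}}$ is connected for every vertex $\mathbf{u}$ with $d(\mathbf{u})<d(\veck)$.

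The core of the argument is a reduction of an arbitrary dipath to a combinatorial normal form, driven by an \emph{elementary move}: if two dipaths agree outside a subinterval on which both lie in a single elementary cube $\sigma\subseteq K$ and share the endpoints of that subinterval, then they are dihomotopic. This rests on the fact that an elementary cube is directed-contractible, so the space of dipaths between any two of its points is connected. Applying this near the endpoint $\veck$, I would show that every $\gamma\in\dipaths{K}{\vecw}{\veck}$ is dihomotopic to a dipath of the form $\gamma'\ast\overline{(\veck-\mathbf{e}_i)\,\veck}$, where $\overline{(\veck-\mathbf{e}_i)\,\veck}$ denotes the straight edge into $\veck$ along a coordinate direction $i$ with $\mathbf{e}_i\in\pastlk{K}{\veck}$, and where $\gamma'\in\dipaths{K}{\vecw}{\veck-\mathbf{e}_i}$. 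The idea is that the final cube through which $\gamma$ enters $\veck$ corresponds to a simplex of $\pastlk{K}{\veck}$, and inside that cube the approach can be straightened to pass through the vertex $\veck-\mathbf{e}_i$; note $d(\veck-\mathbf{e}_i)<d(\veck)$, so the inductive hypothesis applies to $\gamma'$.

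With this normal form in hand, the connectivity of $\pastlk{K}{\veck}$ performs the global work. Given two dipaths, reduced to edge-approaches along directions $i$ and $i'$, I use that $\pastlk{K}{\veck}$ is connected---equivalently that its $1$-skeleton is a connected graph---to choose a sequence $i=i_0,i_1,\dots,i_m=i'$ in which each consecutive pair $i_\ell,i_{\ell+1}$ spans an edge of $\pastlk{K}{\veck}$, i.e., the square $Q_\ell=[\veck-\mathbf{e}_{i_\ell}-\mathbf{e}_{i_{\ell+1}},\veck]$ lies in $K$. For each such square, its lower corner $\mathbf{c}=\veck-\mathbf{e}_{i_\ell}-\mathbf{e}_{i_{\ell+1}}$ satisfies $d(\mathbf{c})=d(\veck)-2$ and is reachable, so by the inductive hypothesis I may replace the initial portion of each of the two edge-approaches by a fixed dipath $\vecw\to\mathbf{c}$ followed by the appropriate edge of $Q_\ell$. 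The two resulting dipaths then differ only by the two boundary edge-paths of $Q_\ell$ from $\mathbf{c}$ to $\veck$, which are dihomotopic by the elementary move applied to $Q_\ell$. Chaining these dihomotopies along $i_0,\dots,i_m$ connects the two original dipaths, so $\dipaths{K}{\vecw}{\veck}$ is connected and the induction closes.

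I expect the main obstacle to be the normal-form reduction of the second paragraph: turning the intuitive instruction ``push the tail of the dipath onto the $1$-skeleton'' into a genuine dihomotopy through dipaths that stays inside $K$, is continuous in the compact-open topology, and handles the reparametrization bookkeeping at the concatenation point. Once the directed-contractibility input for single cubes is established in the form of the elementary-move lemma, the square move and the chaining argument are routine consequences of the inductive hypothesis and the connectivity of the past link.
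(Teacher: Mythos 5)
This paper does not actually prove \thmref{connected}; it imports it from~\cite{belton2020towards}, so I can only weigh your argument against the standard route. Your global skeleton (induction on $\|\veck-\vecw\|_1$, an elementary-move lemma inside a single cube, then chaining across edges of the connected past link via the squares $Q_\ell$ and their lower corners) is the right shape, and the elementary move itself is genuinely easy: linear interpolation of two dipaths with common endpoints inside one elementary cube is again a dipath in that cube. The genuine gap is the normal-form reduction, and it is not the ``reparametrization bookkeeping'' you anticipate --- the step fails as sketched. First, there is in general no single ``final cube'' of $K$ containing the whole tail of $\gamma$ once it enters the box $[\veck-\one,\veck]$: if that unit cube is not in $K$ but several of its proper faces are, a dipath can run through more than one of those faces on its way to $\veck$, so your elementary move cannot be applied once to the entire tail. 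Second, if you instead cut the tail at a later time $t_0$ from which it \emph{is} contained in a single cube $[\veck-\vecj,\veck]\subseteq K$ (this happens exactly when every coordinate of $\gamma(t_0)$ is strictly larger than the corresponding coordinate of $\veck-\one$), then $\gamma(t_0)\not\preceq\veck-\mathbf{e}_i$ for every $i$, so no rerouting inside that cube can force the path through any vertex of $\pastlk{K}{\veck}$; the straightened tail still enters $\veck$ through the interior of the same cube. Thus the claim ``every dipath to $\veck$ is dihomotopic rel endpoints to one entering along an edge (or through some vertex $\veck-\vecj$ with $\vecj\in\pastlk{K}{\veck}$)'' is precisely the hard content of the theorem, and your proposal assumes it rather than proves it. Closing it requires real machinery: either a cubical/combinatorial approximation theorem for dipaths and dihomotopies, or a Ziemia\'nski-type decomposition of the trace space of the kind quoted as \lemref{homotopy-equivalence} (see~\cite{ziemianski2016execution,lisbeth}); this is what the cited proof in~\cite{belton2020towards} leans on.

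Two secondary points. Your reachability remark ``connected hence nonempty'' silently uses the convention that the empty complex is not connected; a vertex $\neq\vecw$ can have empty past link even when $K$ has a minimum vertex, and your square-move step needs $\dipaths{K}{\vecw}{\mathbf{c}}\neq\emptyset$, so the convention must be fixed or the nonemptiness argued. Also, the inductive hypothesis you actually invoke is path-connectedness of $\dipaths{K}{\vecw}{\mathbf{u}}$ (you need a dihomotopy between the two prefixes, not merely that they lie in one connected component), so the induction should be phrased in terms of path components throughout, or you must note that these trace spaces have components equal to path components. With those caveats, the base case, the square move through the corner at distance $d(\veck)-2$, and the chaining along the $1$-skeleton of $\pastlk{K}{\veck}$ are fine --- but only once the missing normal-form lemma is supplied.
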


Furthermore, we proved a partial converse to \thmref{connected}. Specifically,
the converse holds only if $K$ is a reachable directed Euclidean cubical complex as defined
in~\defstmtref{subcomplexes}{reachable}. This is expected: properties of  parts of the directed
Euclidean complex which are not reachable from the vertex $\vecw$, do not influence the
dipath spaces from $\vecw$.

\begin{theorem}[{Realizing Obstructions \cite[Theorem 3]{belton2020towards}}]\label{thm:obstructions}
    Let $(K,\cubes{K})$ be a directed Euclidean cubical complex in $\R^n$.
    Let $\vecw \in \verts{K}$, and
    let $L = \reachcplx{K}{\vecw}$.
    Let~$\vecv \in \verts{L}$.
    If the past link $\pastlk{L}{\vecv}$ is disconnected, then
    the space~$\dipaths{K}{\vecw}{\vecv}$ is disconnected.
\end{theorem}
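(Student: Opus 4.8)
The plan is to manufacture a \emph{continuous} invariant of dipaths that records which connected component of the past link a dipath approaches $\vecv$ through, and to show this invariant takes at least two values. Concretely, I would build a continuous surjection
$\Phi \colon \dipaths{K}{\vecw}{\vecv} \to \pi_0\bigl(\pastlk{L}{\vecv}\bigr)$
onto the discrete set of connected components of the past link. Since $\pastlk{L}{\vecv}$ is disconnected, the target has at least two points, and a continuous surjection onto a discrete space of cardinality $\geq 2$ partitions its domain into two nonempty disjoint clopen sets (the preimages of the points), forcing the domain to be disconnected.

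The first step is a reduction from $K$ to $L=\reachcplx{K}{\vecw}$. Every dipath $\gamma$ from $\vecw$ lies entirely in $L$, because each initial segment $\gamma|_{[0,t]}$ is a dipath witnessing $\gamma(t)\in\reachcplx{K}{\vecw}$; hence $\dipaths{K}{\vecw}{\vecv}=\dipaths{L}{\vecw}{\vecv}$, and I work inside $L$. Next I would set up the local cone structure and define $\Phi$. Fix $\epsilon\in(0,1)$ and let $C:=\{\vecx\in L \mid \vecx\preceq\vecv,\ \|\vecx-\vecv\|\le\epsilon\}$. Since $\vecv\in\Z^n$ and all cubes are unit cubes, any such $\vecx$ lies in the cube $[\vecv-\vecj,\vecv]$ with $j_i=1$ exactly when $x_i<v_i$, and this cube is a lower coface recorded by $\pastlk{L}{\vecv}$; thus $C$ is the geometric cone on $P:=|\pastlk{L}{\vecv}|$ with apex $\vecv$, and the components of $C\setminus\{\vecv\}$ biject with $\pi_0(P)$. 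For a dipath $\gamma$, the function $r_\gamma(t):=\|\gamma(t)-\vecv\|$ is non-increasing (each coordinate $v_i-\gamma_i(t)$ is nonnegative and non-increasing because $\gamma(t)\preceq\vecv$ and $\gamma$ is nondecreasing), with $r_\gamma(1)=0$; hence the tail of $\gamma$ lies in the connected set $C\setminus\{\vecv\}$ and determines a single component, which I take to be $\Phi(\gamma)$. Surjectivity is then immediate from reachability: every component of $P$ contains some vertex $\vecei$, i.e.\ $[\vecv-\vecei,\vecv]\subseteq L$, so $\vecv-\vecei\in\verts{L}$ is reachable from $\vecw$; concatenating a dipath $\vecw\to\vecv-\vecei$ with the straight edge to $\vecv$ yields a dipath in that component.

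The main obstacle is proving $\Phi$ continuous into the discrete $\pi_0(P)$. Here I would use that on the compact domain $I$ the compact-open topology coincides with uniform convergence. Given $\gamma$ with $\Phi(\gamma)=A$ and corresponding open cone-component $U_A\subseteq C\setminus\{\vecv\}$, choose $s<1$ with $\gamma(s)\in U_A$ and $r_\gamma(s)<\epsilon$. If $\gamma'$ is uniformly close to $\gamma$, then $\gamma'(s)\in U_A$ (openness) with $r_{\gamma'}(s)<\epsilon$; because $r_{\gamma'}$ is non-increasing, the entire tail $\gamma'([s,1))$ stays within the $\epsilon$-ball and below $\vecv$, hence inside $C\setminus\{\vecv\}$, and being connected and meeting $U_A$ it lies in $U_A$, giving $\Phi(\gamma')=A$. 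The delicate points to verify with care are exactly that $r_\gamma$ is genuinely monotone (so the tail cannot escape the $\epsilon$-ball and re-enter through a different component) and that the cone decomposition of $C$ faithfully records precisely the simplices of $\pastlk{L}{\vecv}$; both rest on $\gamma(t)\preceq\vecv$ together with all cubes being unit cubes with integer vertices.

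Combining these, $\Phi$ is a continuous surjection onto a discrete set of size at least two, so $\dipaths{K}{\vecw}{\vecv}=\dipaths{L}{\vecw}{\vecv}$ is disconnected, as claimed.
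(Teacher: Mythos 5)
The present paper does not actually prove this statement; it imports it as Theorem 3 of \cite{belton2020towards}, so there is no in-paper proof to compare against and your argument has to stand on its own. Your overall strategy is sound: build a locally constant invariant $\Phi$ with values in the set of components of $\pastlk{L}{\vecv}$, recording through which component a dipath approaches $\vecv$. The reduction to $L=\reachcplx{K}{\vecw}$, the monotonicity of $r_\gamma$, the identification of the components of $C\setminus\{\vecv\}$ with those of the past link, and the surjectivity via reachability are all essentially correct, granted the paper's standing convention (Definition 1) that $\reachcplx{K}{\vecw}$ is a subcomplex --- you use this silently when you assert that the minimal cube $[\vecv-\vecj,\vecv]$ of a reachable point $\vecx$ is itself contained in $L$, which is what puts the support simplex into $\pastlk{L}{\vecv}$; that fact deserves an explicit word (or a citation), since for an arbitrary subset of $K$ it would be false.

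The concrete gap is with dipaths that reach $\vecv$ strictly before time $1$. By monotonicity such a path is constant equal to $\vecv$ on $[t_0,1]$, so its tail $\gamma([s,1))$ contains $\vecv$ and is \emph{not} contained in $C\setminus\{\vecv\}$: as written, $\Phi$ is not even well defined on these paths, and your continuity step (``the entire tail $\gamma'([s,1))$ stays \dots inside $C\setminus\{\vecv\}$'') fails for them; they are unavoidable, since any dipath reparametrized to be constant near $1$ has this form. The repair is routine but must be made: the hypothesis gives $\pastlk{L}{\vecv}$ at least two components, while $\pastlk{L}{\vecw}=\emptyset$ (nothing strictly below $\vecw$ is reachable from $\vecw$), so $\vecv\neq\vecw$ and the first hitting time $t_0(\gamma)$ of $\vecv$ is positive; define $\Phi(\gamma)$ by the component containing $\gamma(t)$ for $t$ just below $t_0(\gamma)$, and in the continuity argument replace $[s,1)$ by $[s,t_0(\gamma'))$, noting that $\gamma'(s)\in U_A$ forces $t_0(\gamma')>s$ and choosing the uniform bound smaller than $\|\gamma(s)-\vecv\|$ so that $\gamma'(s)\neq\vecv$. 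Two smaller points: ``the connected set $C\setminus\{\vecv\}$'' is a slip --- connectedness of $C\setminus\{\vecv\}$ is precisely what fails; what you actually use is that $\gamma([s,t_0))$ is a connected subset of it, together with the openness of the components of $C\setminus\{\vecv\}$ (local connectivity of the polyhedron $C$), which should be stated. With these repairs the argument does prove the theorem.
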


\section{Directed Collapsing Pairs}
Although simplicial collapses preserve the homotopy type of the underlying
space \cite[Proposition 6.14]{kozlovcombinatorial2007} and hence of all path spaces, this type of collapsing in directed Euclidean
cubical complexes may not preserve topological properties of spaces of dipaths. In this section,
we study a specific type of collapsing called a link-preserving directed collapse.
We define link-preserving directed collapses in \secref{LPDC-def} and give properties of link-preserving
directed collapses in \secref{LPDC-properties}.

\subsection{Link-Preserving Directed Collapses} \label{sec:LPDC-def}
Since we are interested in preserving the dipath spaces through
collapses, the results from \ssecref{prior}
motivate us to study a type of directed collapse (DC) via past links, introduced in \cite{belton2020towards}.
However, we call it a \emph{link-preserving directed collapse} (LPDC) (as
opposed to a \emph{directed collapse})
since we show in the last sections of this paper that when the spaces of dipaths starting
from the minimum vertex are not connected, the following definition of collapse does not preserve
the number of components.

\begin{definition}[Link Preserving Directed Collapse]\label{def:directedCollapse}
    Let $(K,\cubes{K})$ be a directed Euclidean cubical complex in $\R^n$.
    Let $\sigma \in \cubes{K}$ be a maximal cube, and let $\tau$ be a proper
    face of $\sigma$ such that no
    other maximal cube contains the face $\tau$ (in this case, we say that $\tau$ is a
    \emph{free face} of $\sigma$). Then, we define the $(\tau,\sigma)$-collapse
    of $K$ as the subcomplex obtained by
    removing everything in between $\tau$ and $\sigma$:
    \begin{equation}\label{eqn:directedCollapse}
    K' =
        K \setminus \{\gamma \in \cubes{K}  \mid
            \overline{\tau} \subseteq \overline{\gamma} \subseteq
            \overline{\sigma} \},
    \end{equation}
    and let $\cubes{K'}$ denote the stratification of the set $K'$
    induced by the cubical stratification of $\R^n$ (thus, $\cubes{K'} \subsetneq
    \cubes{K}$).

    We call the directed Euclidean cubical complex~$(K',\cubes{K'})$
    a \emph{link-preserving directed collapse (LPDC)} of $(K,\cubes{K})$  if,
    for all~$\vecv\in \verts{K'}$,
    the past link~$\pastlk{K}{\vecv}$ is homotopy equivalent to
    $\pastlk{K'}{\vecv}$ (denoted $\pastlk{K}{\vecv} \simeq \pastlk{K'}{\vecv}$).
    The pair $(\tau,\sigma)$ is then called an \emph{LPDC pair}.
\end{definition}

\begin{remark}[Simplicial Collapses]
    The study of simplicial collapses
    is known as \emph{simple homotopy
    theory}~\cite{whitehead1950simple,cohen2012course}, and traces back to the work of
    Whitehead in the 1930s~\cite{whitehead}.  The idea is very similar:
    If $C$ is an abstract simplicial complex and $\alpha \in C$ such that $\alpha$ is a
    proper face of exactly one maximal simplex $\beta$, then the following
    complex is the \emph{$\alpha$-collapse of $C$ in the simplicial setting}:
    $$C'=C \setminus \{
    \gamma \in C \mid \alpha \subseteq \gamma \subseteq \beta \}.$$
    Note that we use only the free face ($\alpha$) when
    defining a simplicial collapse, as
    doing so helps to distinguish between discussing
    a simplicial collapse and
    a directed Euclidean cubical collapse.
    In addition, we always explicitly state ``in the simplicial setting" when
    talking about a simplicial collapse.
\end{remark}

Applying a sequence of LPDCs to a directed Euclidean cubical complex can reduce the number
of cubes, and hence can more clearly illustrate the number of
dihomotopy classes of dipaths within the directed Euclidean cubical complex. For an example, see
\figref{swissflag-collapse}. However, it is not necessarily true that LPDCs preserve dipath spaces.
We discuss the relationship between dipath spaces and LPDCs in \secref{pathspaces}.

\begin{figure}[th]
    \centering
    \subfloat[Swiss Flag\label{subfig:swiss-collapse1}]{%
        \includegraphics[height=1in]{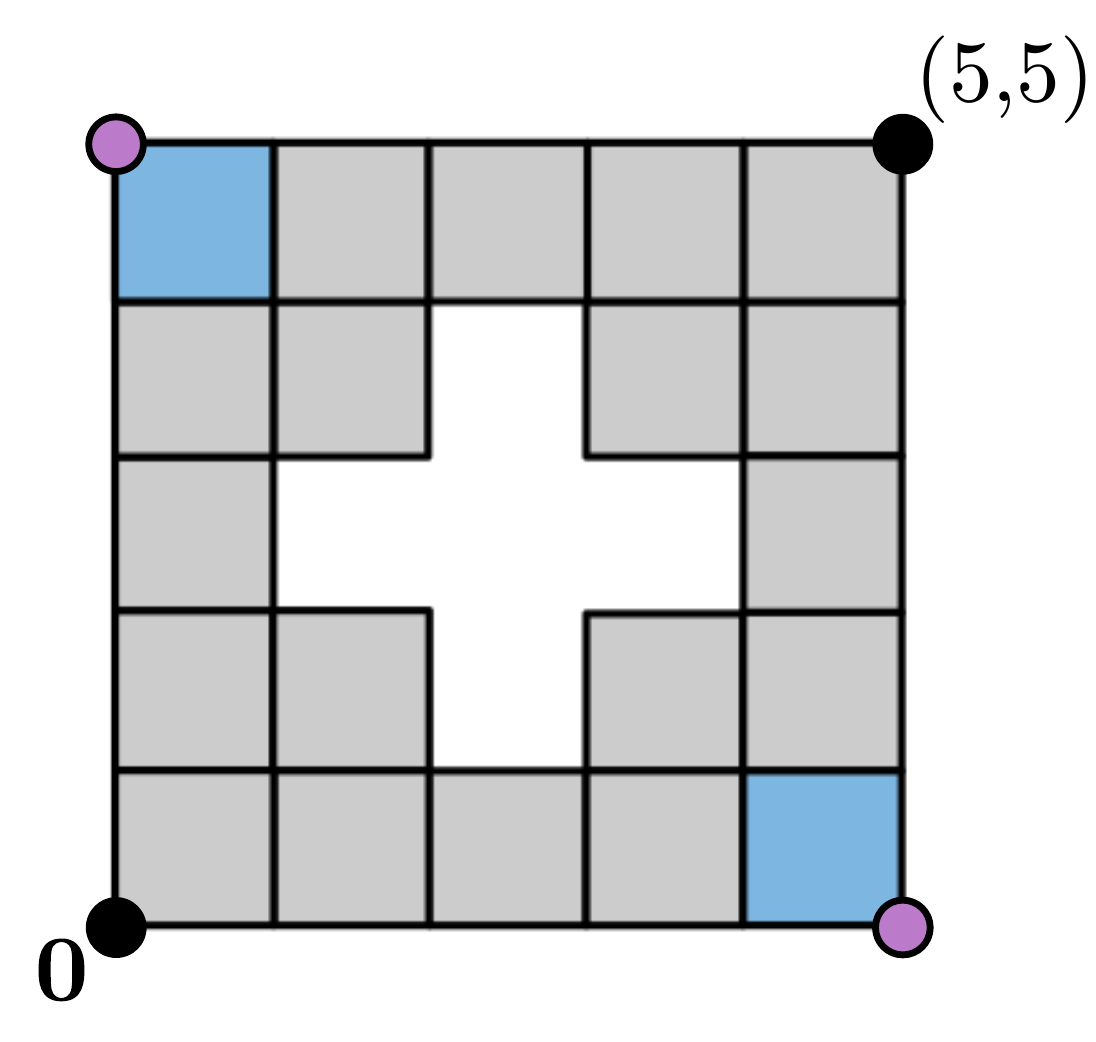}}%
    \hfil
    \subfloat[First stage\label{subfig:swiss-collapse2}]{%
        \includegraphics[height=1in]{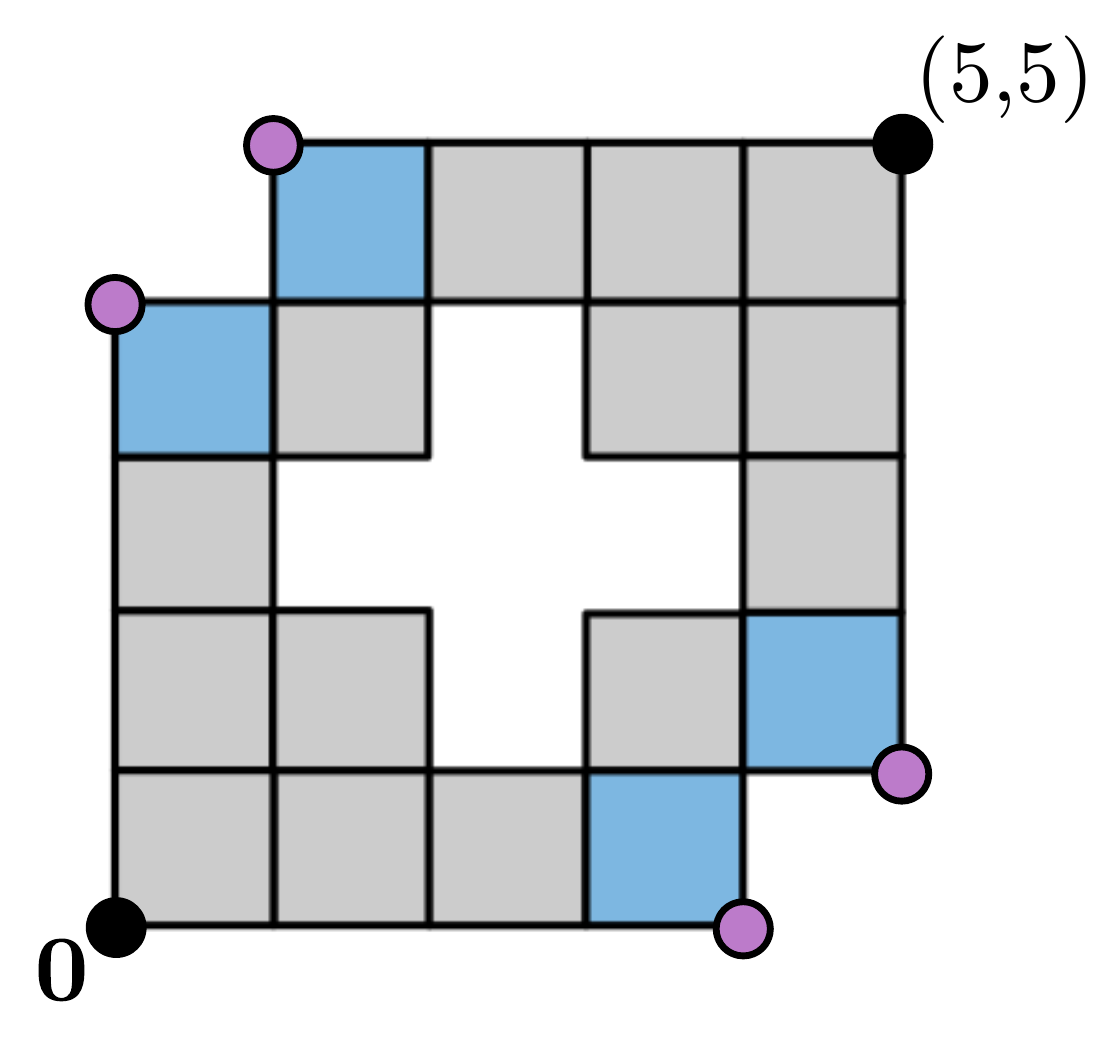}}%
    \hfil
    \subfloat[Second stage\label{subfig:swiss-collapse3}]{%
        \includegraphics[height=1in]{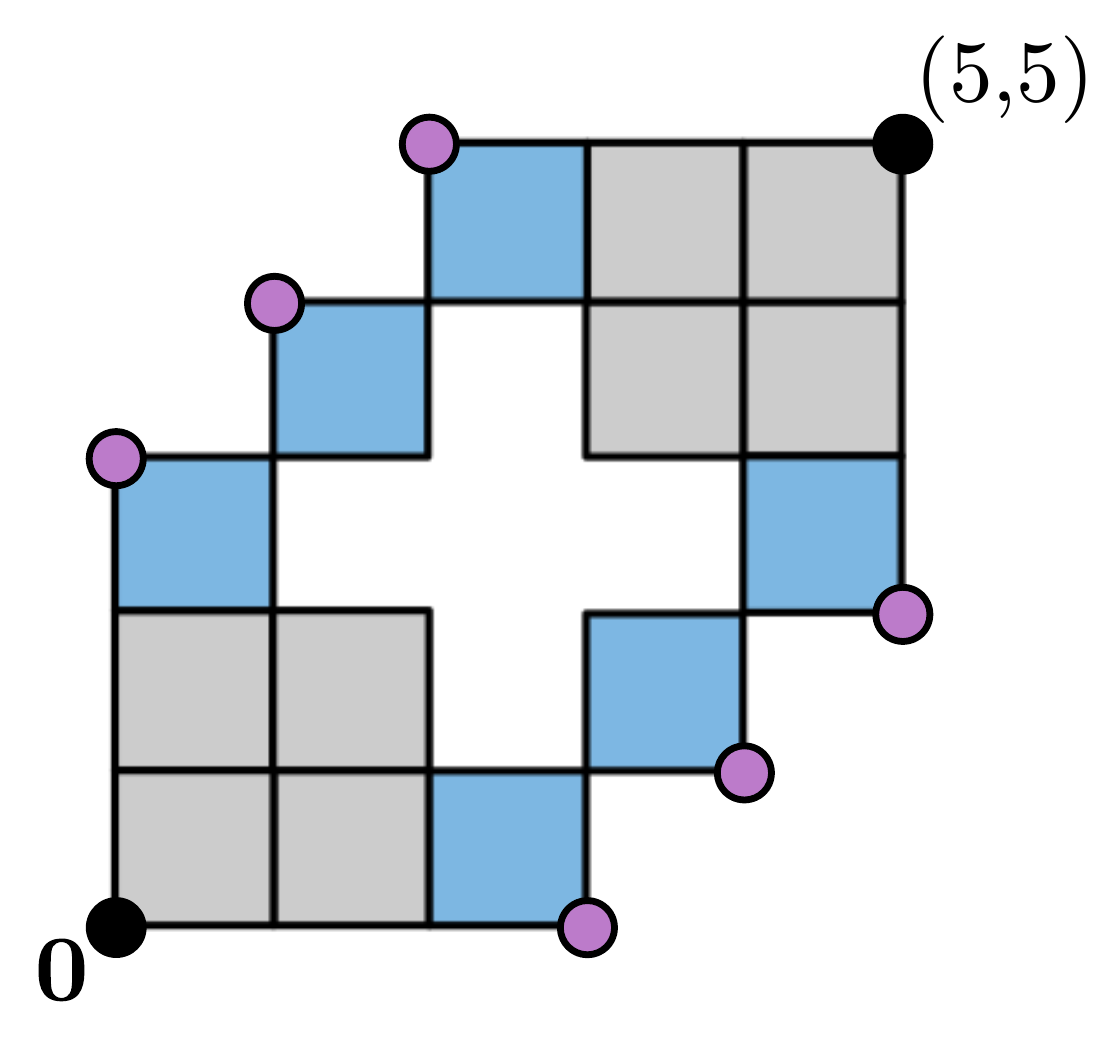}}\\
    \subfloat[Third stage\label{subfig:swiss-collapse4}]{%
        \includegraphics[height=1in]{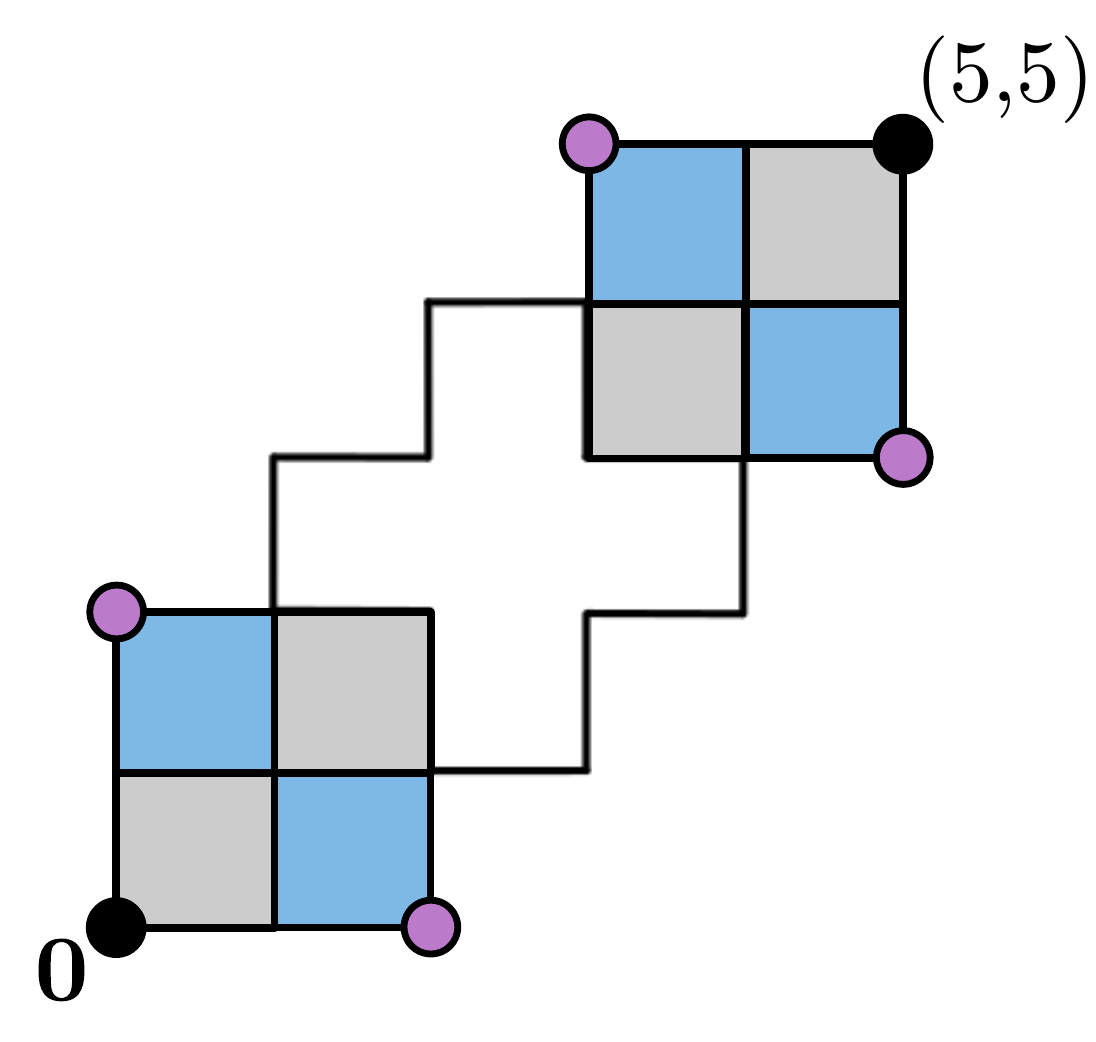}}%
    \hfil
    \subfloat[Fourth stage\label{subfig:swiss-collapse5}]{%
        \includegraphics[height=1in]{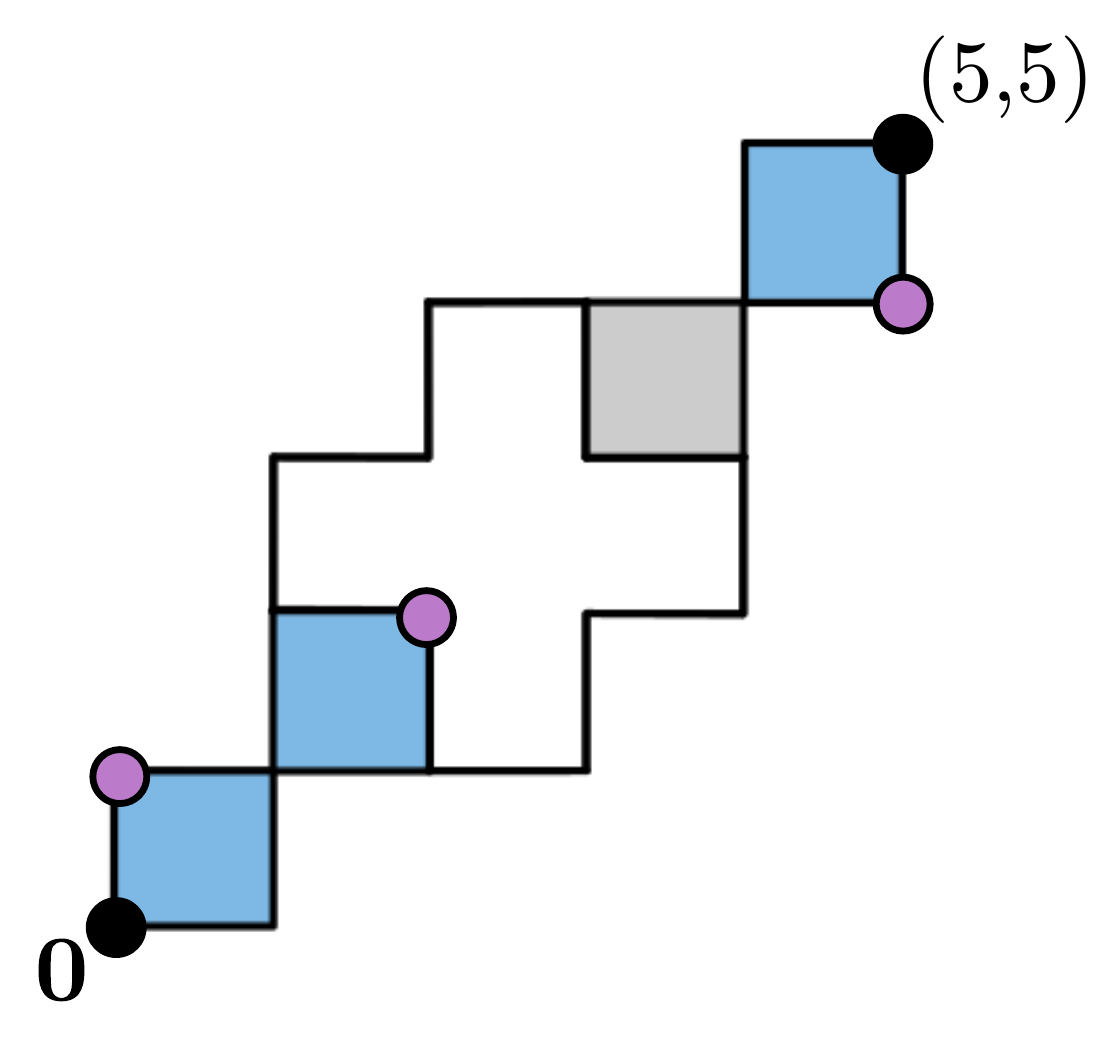}}%
    \hfil
    \subfloat[After Collapses\label{subfig:swiss-collapse-final}]{%
        \includegraphics[height=1in]{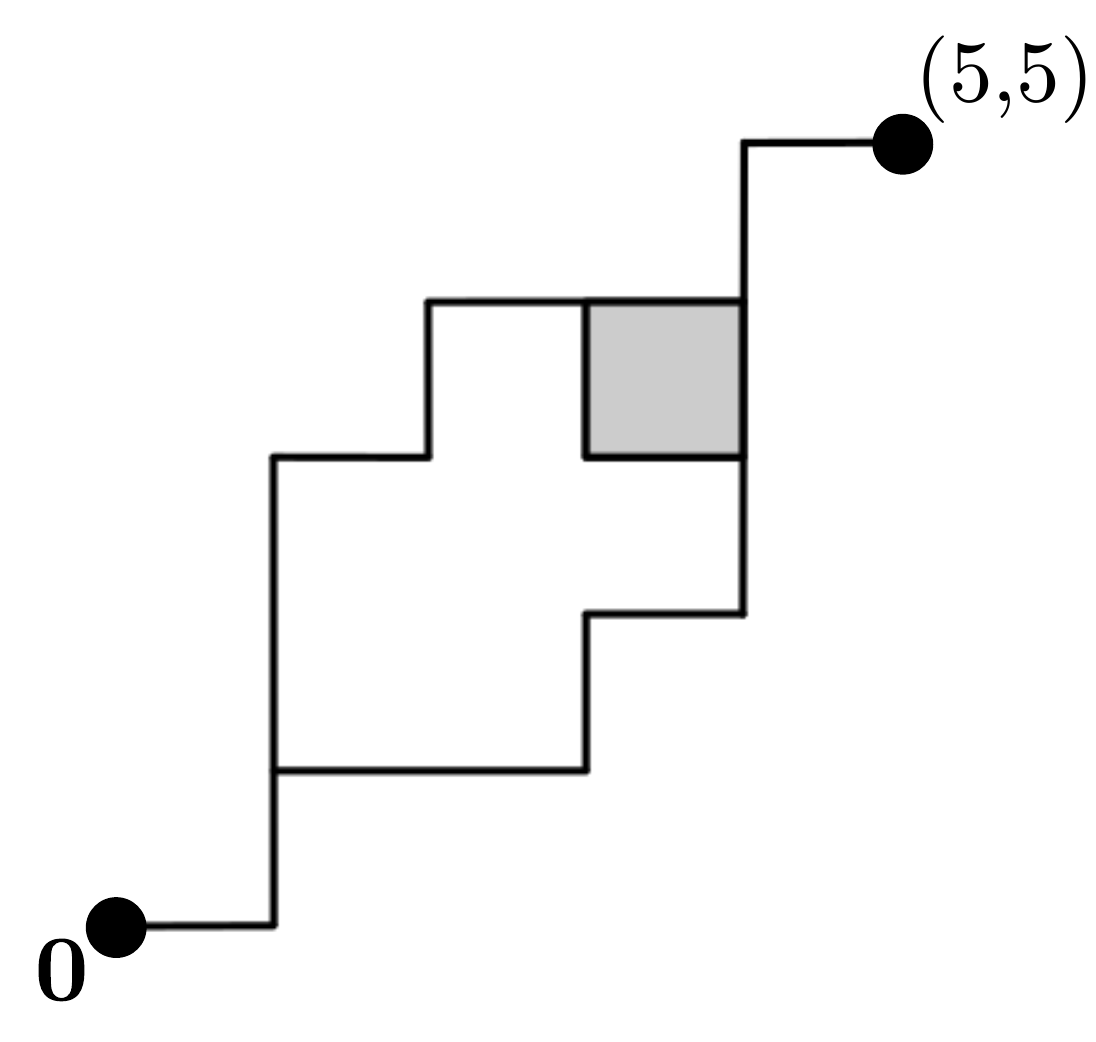}}%
    \caption{Collapsing the Swiss Flag. A sequence of vertex collapses is
    presented from the top left to bottom right.  At each stage of the sequence, the faces and
    vertices shaded in blue and purple represent the vertex collapsing
    pairs with the blue Euclidean cube
    being $\sigma$ and the purple vertex being $\tau$. The
    result of the sequence of LPDCs is shown in \protect\subref{subfig:swiss-collapse-final} is a
    one-dimensional directed Euclidean cubical complex and one two-cube. Observe that this
    directed Euclidean cubical complex clearly illustrates the two dihomotopy classes of $\protect\dipaths{K}{\zero}{(5,5)}$.
    }
    \label{fig:swissflag-collapse}
\end{figure}

\subsection{Properties of LPDCs} \label{sec:LPDC-properties}

We give a combinatorial condition for a collapsing pair  $(\tau, \sigma)$ to be
an LPDC pair; namely, the condition is that $\tau$ does not contain the minimal vertex of $\sigma$,
$\min(\sigma)$. From the definition of an LPDC, we see that finding an LPDC pair requires
computing the past link of \emph{all} vertices in $\verts{K'}$. In \cite{belton2020towards}, we
discuss how we can reduce to only checking the vertices in ~$\sigma$, since no other
vertices have their past links affected. In this paper, we prove we need to only
check \emph{one} condition to determine if we have an LPDC pair. The one simple
condition dramatically reduces the number of computations we need to perform in order to verify
we have an LPDC. This result  given in~\thmref{collapsingpairs} depends on the following lemmas
about the properties of past links on vertices.

\begin{lemma}[Properties of Past Links in a Vertex Collapse]\label{lem:pastlink}
    Let $(K,\cubes{K})$ be a directed Euclidean cubical complex in $\R^n$.
    Let~$\sigma \in \cubes{K}$ and~$\tau,\vecv \in \verts{\sigma}$
    such that $\tau \preceq \vecv$. If $\tau$ is a free face of $\sigma$ and~$K'$ is
    the~$(\tau,\sigma)$-collapse,
    then the following two statements hold:
    \begin{enumerate}
        \item $\pastlk{K|_{\sigma}}{\vecv}=\{\vecj \in\{0,1\}^n\setminus\setzero \mid
            \min(\sigma) \preceq \vecv - \vecj \}$.\label{stmt:pastlink-K}
        \item
            $\pastlk{K'|_{\sigma}}{\vecv}=\pastlk{K|_{\sigma}}{\vecv}\setminus\{\vecj\in\{0,1\}^n
            \setminus \setzero
            \mid \vecv- \vecj \preceq \tau \}$.\label{stmt:pastlink-Kprime}
    \end{enumerate}
\end{lemma}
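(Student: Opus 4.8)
The plan is to prove both statements by unwinding the definitions of $K|_{\sigma}$, $K'|_{\sigma}$, and the past link, reducing each membership question to a combinatorial inequality in the product order. The key preliminary observation is that, since $\sigma \in \cubes{K}$ and a cubical complex is closed under taking faces, every elementary cube $\gamma$ with $\min(\sigma) \preceq \min(\gamma) \preceq \max(\gamma) \preceq \max(\sigma)$ is a face of $\sigma$ and therefore lies in $\cubes{K}$. Hence the index set in \defstmtref{subcomplexes}{} for $K|_{\sigma}$ is exactly the set of all faces of $\sigma$, so $K|_{\sigma}$ equals the closed cube $\overline{\sigma}=[\min(\sigma),\max(\sigma)]$, and $\cubes{K|_{\sigma}}$ is precisely the collection of faces of $\sigma$. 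This reduces \stmtref{pastlink-K} to computing $\pastlk{\overline{\sigma}}{\vecv}$: by \defref{pl}, $\vecj \in \pastlk{K|_{\sigma}}{\vecv}$ iff $[\vecv-\vecj,\vecv]\subseteq\overline{\sigma}$, and since $\vecv\in\verts{\sigma}$ already gives $\vecv\preceq\max(\sigma)$, containment in the closed cube collapses to the single inequality $\min(\sigma)\preceq\vecv-\vecj$, which is the claimed description.

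For \stmtref{pastlink-Kprime} I would first identify $\cubes{K'|_{\sigma}}$. Because $\tau\in\verts{\sigma}$ is a vertex, $\overline{\tau}$ is a point, so the cubes deleted by \eqnref{directedCollapse} that are faces of $\sigma$ are exactly the faces $\gamma$ of $\sigma$ with $\tau\in\verts{\gamma}$; the surviving faces are those with $\tau\notin\verts{\gamma}$. These surviving faces do form a subcomplex of $\overline{\sigma}$, since containing $\tau$ as a vertex is a property inherited by cofaces rather than by faces (if $\tau\in\verts{\epsilon}$ and $\epsilon$ is a face of $\gamma$, then $\tau\in\verts{\gamma}$). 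The crux is then to show that $[\vecv-\vecj,\vecv]\subseteq K'|_{\sigma}$ holds if and only if the cube $[\vecv-\vecj,\vecv]$ is itself one of these surviving faces of $\sigma$. This is the one place I expect real subtlety: one must invoke the cubical stratification of $\R^n$, noting that the relative interior of $[\vecv-\vecj,\vecv]$ is a single open stratum, so if it meets the finite union of closed surviving faces that constitutes $K'|_{\sigma}$, then $[\vecv-\vecj,\vecv]$ must be a face of some surviving cube and hence, by closure under faces within $\overline{\sigma}$, a surviving cube itself.

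Combining these facts, $\vecj\in\pastlk{K'|_{\sigma}}{\vecv}$ iff $[\vecv-\vecj,\vecv]$ is a face of $\sigma$ (equivalently $\vecj\in\pastlk{K|_{\sigma}}{\vecv}$ by \stmtref{pastlink-K}) and $\tau\notin\verts{[\vecv-\vecj,\vecv]}$. The last step translates the vertex-membership condition: $\tau\in\verts{[\vecv-\vecj,\vecv]}$ means $\vecv-\vecj\preceq\tau\preceq\vecv$, and since $\tau\preceq\vecv$ holds by hypothesis, this reduces to $\vecv-\vecj\preceq\tau$. Thus $\pastlk{K'|_{\sigma}}{\vecv}$ is obtained from $\pastlk{K|_{\sigma}}{\vecv}$ by deleting exactly the vectors $\vecj$ with $\vecv-\vecj\preceq\tau$, which is \stmtref{pastlink-Kprime}. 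The main obstacle is the stratification argument establishing the set-containment characterization; everything else is bookkeeping with the product order, and the free-face hypothesis is not needed for the computation since it is entirely local to $\sigma$.
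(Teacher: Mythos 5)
Your proposal is correct and follows essentially the same route as the paper's proof: both reduce membership in the past links to the product-order conditions $\min(\sigma)\preceq\vecv-\vecj$ and $\vecv-\vecj\preceq\tau$ by unwinding the definitions of $K|_{\sigma}$, the collapse, and the past link. The only cosmetic difference is in the forward inclusion of the second statement, where you justify ``$[\vecv-\vecj,\vecv]\subseteq K'|_{\sigma}$ implies it is a surviving face'' via the stratification, while the paper reaches the same conclusion by a contradiction argument using the removed face $[\tau,\vecv]$.
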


\begin{proof}
    To ease notation, we define the following two sets:
    \begin{align*}
        J & := \{\vecj\in\{0,1\}^n\setminus\setzero \mid \min(\sigma) \preceq \vecv - \vecj \} \\
        I & := \{\vecj\in\{0,1\}^n\setminus\setzero \mid \vecv- \vecj \preceq \tau \}.
    \end{align*}

    % ... Proof of Statement 1.
    First, we prove \stmtref{pastlink-K} (that $\pastlk{K|_{\sigma}}{\vecv} = J$).
    We start with the forward inclusion. Let~$\vecj\in \pastlk{K|_{\sigma}}{\vecv}$.
    By the definition of past links (see \defref{pl}), we know that
    $[\vecv-\vecj,\vecv]\subseteq K|_{\sigma}$.
    By the definition of $K|_{\sigma}$ (see \defref{subcomplexes}),
    we know that~$\min(\sigma) \preceq \min([\vecv-\vecj,\vecv]) = \vecv-\vecj$. This then implies $\vecj\in J$.
    Therefore,~$\pastlk{K|_{\sigma}}{\vecv}\subseteq J$.
    For the backward inclusion, let~$\vecj\in J$.
    Then, since $\vecv \in \verts{\sigma}$ and
    $\sigma$ is an elementary cube by assumption,
    and $\min(\sigma) \preceq \vecv -
    \vecj$ by definition of~$J$, we have that $\vecv-\vecj$ is an element of $\verts{\sigma}$. Since~$\sigma \in \cubes{K}$, all faces
   of $\sigma$ must be in $\cubes{K}$; hence,~$[\vecv-\vecj,\vecv]\subseteq K|_{\sigma}$. Therefore,~$\vecj \in
    \pastlk{K|_{\sigma}}{\vecv}$,
    and so~$\pastlk{K|_{\sigma}}{\vecv} \supseteq J$.
    Since we have both inclusions, then \stmtref{pastlink-K} holds.

    % ... Proof of Statement 2.
    Now, we prove \stmtref{pastlink-Kprime} (that $\pastlk{K'|_{\sigma}}{\vecv}
    = J \setminus I$).
    Again, we prove the inclusions in both
    directions, starting with the forward inclusion. Consider $\vecj \in
    \pastlk{K'|_{\sigma}}{\vecv}$.
    By \lemstmtref{plprops}{subcomplex}, we have the following inclusion of past links: 
    $\pastlk{K'|_{\sigma}}{\vecv} \subseteq \pastlk{K|_{\sigma}}{\vecv}$,
    and so, we obtain~$\vecj \in \pastlk{K|_{\sigma}}{\vecv} = J$.
    Next, we must show that $\vecj \notin I$.
    Assume, for a contradiction, that $\vecj \in I$. Then, by definition of $I$,
    $\vecv-\vecj \preceq \tau$. Since~$\tau \preceq \vecv$,  we
    obtain the partial order~$\vecv -\vecj \preceq \tau \preceq \vecv$.
    This implies that $[\tau, \vecv]$ must be a face of  $[\vecv-\vecj,\vecv]$, i.e. $[\tau, \vecv]\subseteq [\vecv-\vecj,\vecv]$.
    Since~$[\vecv-\vecj,\vecv]$ is an elementary cube in $K' |_{\sigma}$, then
    its face $[\tau,
    \vecv]$ must also be an elementary cube in $K' |_{\sigma}$.
    Setting $\overline{\gamma} = [\tau,\vecv]$ and observing $\tau =
    \overline{\tau} \subseteq \overline{\gamma}
    \subseteq \overline{\sigma}$, we observe that~$\gamma$ is not an elementary
    cube in $K'$ by \eqnref{directedCollapse}.
    This gives us a contradiction and so~$\vecj \notin I$.
    Therefore, $\pastlk{K'|_{\sigma}}{\vecv} \subseteq J \setminus I$.

    Finally, we prove the backward inclusion of \stmtref{pastlink-Kprime}. Let
    $\vecj \in J \setminus I$.
    Then, by \stmtref{pastlink-K}, $\vecj \in \pastlk{K|_{\sigma}}{\vecv}$ and either $\tau
    \prec \vecv-\vecj$ or $\tau$
    is not comparable to~$\vecv-\vecj$ under $\preceq$.
    Thus, by \eqnref{directedCollapse}, $[\vecv-\vecj,\vecv]$
    is an elementary cube of~$K'|_{\sigma}$.
    Thus, by \defref{pl}, we have that $\vecj \in \pastlk{K'|_{\sigma}}{\vecv}$.
    Hence, we obtain that $J \setminus I\subseteq
    \pastlk{K'|_{\sigma}}{\vecv}$,
    and so \stmtref{pastlink-Kprime} holds.
\end{proof}

Using~\lemref{pastlink}, we see
why~$\tau$ cannot be the vertex $\min(\sigma)$ when performing
an LPDC. If $\tau = \min(\sigma)$, then
\begin{align*}
    \pastlk{K'|_{\sigma}}{\vecv} &=\{\vecj\in \{0,1\}^n\setminus\setzero \mid
    \min(\sigma) \preceq \vecv-\vecj\}\\
    &\quad \setminus\{\vecj \in \{0,1\}^n\setminus\setzero \mid \vecv-\vecj \preceq \min(\sigma)\}\\
    &=\{\vecj\in\{0,1\}^n\setminus\setzero \mid \min(\sigma)\preceq \vecv-\vecj \text{ and } \vecv-\vecj \succ \min(\sigma)\}\\
    &=\{\vecj\in\{0,1\}^n\setminus\setzero \mid \min(\sigma) \prec \vecv-\vecj\}\\
    &=\{\vecj\in\{0,1\}^n\setminus\setzero \mid \vecj \prec
    \vecv-\min(\sigma)\}.
\end{align*}
If $\vecv$ is the maximum vertex of $\sigma$, then we
obtain
$$ \pastlk{K'|_{\sigma}}{\vecv} =\{0,1\}^n\setminus \{ \binzero, \vecv-\min(\sigma) \}.$$
This computation gives us the following corollary, which we illustrate in
\figref{box-minvert} when $K$ is a single closed three-cube.

\begin{corollary}[Caution for a $(\min(\sigma),\sigma)$-Collapse]\label{cor:minvertexcollapse}
    Let $(K,\cubes{K})$ be a directed Euclidean cubical complex in~$\R^n$.
    Let $\sigma \in \cubes{K}$, $\tau = \min(\sigma)$, and $\vecv \in \verts{\sigma}$.
    If $\tau$ is a free face and $K'$ is the~$(\tau,\sigma)$-collapse, then the
    past link of $\vecv$ in $K'|_{\sigma}$ is:
    $$\{\vecj\in\{0,1\}^n\setminus\setzero \mid \vecj \prec \vecv-\min(\sigma)\}.$$

    In particular, if $\vecv = \max(\sigma)$ and $k=\dim(\sigma)$, then
    the past link  is the complete complex on $k$ elements before the collapse, and, after the
    collapse, it is homeomorphic to $\S^{k-2}$. Thus,~$(\tau,\sigma)$ is not an LPDC pair.

\end{corollary}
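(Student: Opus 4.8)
The plan is to derive the displayed formula directly from the algebra that precedes the statement, then read off the two extreme cases through the simplex--vector dictionary, and finally compare homotopy types. The first assertion requires no new work: it is exactly the last line of the chain of equalities displayed immediately before the corollary, obtained by specializing \lemstmtref{pastlink}{Kprime} to $\tau=\min(\sigma)$ and collapsing the conjunction ``$\min(\sigma)\preceq\vecv-\vecj$ and $\vecv-\vecj\not\preceq\min(\sigma)$'' into the single strict relation $\min(\sigma)\prec\vecv-\vecj$, which is equivalent to $\vecj\prec\vecv-\min(\sigma)$ after translating by $\vecv-\min(\sigma)$ in the product order on $\{0,1\}^n$. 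I would present this as a one-line consequence.

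For the ``in particular'' clause I would set $\mathbf{d}:=\max(\sigma)-\min(\sigma)$, a vector with exactly $k=\dim(\sigma)$ ones, and invoke \lemstmtref{pastlink}{K}: before the collapse $\pastlk{K|_\sigma}{\vecv}=\{\vecj\neq\binzero\mid\vecj\preceq\mathbf{d}\}$, which under the correspondence $\vecj\mapsto\{x_i\mid j_i=1\}$ is the set of all nonempty subsets of the $k$-element support of $\mathbf{d}$, i.e.\ the complete complex on $k$ vertices. The first formula then deletes precisely the single top vector $\vecj=\mathbf{d}$, leaving the complete complex minus its unique maximal simplex, namely the boundary of a $(k-1)$-simplex, which is $\S^{k-2}$ (for $k=1$ this degenerates to $\S^{-1}=\emptyset$). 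Since a complete complex is a cone and hence contractible while $\S^{k-2}$ is not contractible, the homotopy type changes.

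The main obstacle is that \defref{directedCollapse} compares the \emph{full} past links $\pastlk{K}{\vecv}$ and $\pastlk{K'}{\vecv}$, not their restrictions to $\sigma$, so I must check that this change is not an artifact of passing to $K|_\sigma$. I expect this to be the crux. I would resolve it by noting that the collapse removes from $\pastlk{K}{\vecv}$ only those simplices whose cubes lie between $\tau$ and $\sigma$; at $\vecv=\max(\sigma)$ this is the single simplex $\mathbf{d}$, which is \emph{maximal} in $\pastlk{K}{\vecv}$ because $\sigma$ is a maximal cube and hence admits no coface. Deleting a maximal $(k-1)$-simplex while keeping all of its proper faces always alters reduced homology: in the long exact sequence of the pair $(\pastlk{K}{\vecv},\pastlk{K'}{\vecv})$ the relative homology is $\Z$ concentrated in degree $k-1$, and the connecting homomorphism sends its generator to $[\partial\mathbf{d}]\in\tilde H_{k-2}(\pastlk{K'}{\vecv})$; whether this class vanishes or not, the groups $\tilde H_{k-1}$ and $\tilde H_{k-2}$ cannot both be preserved.

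Hence $\pastlk{K}{\vecv}\not\simeq\pastlk{K'}{\vecv}$ for an arbitrary ambient complex $K$, and $(\tau,\sigma)$ fails the homotopy-equivalence requirement of \defref{directedCollapse}, so it is not an LPDC pair. I would flag that the homology argument in the last paragraph is what upgrades the clean ``ball versus sphere'' picture valid for $K=\overline{\sigma}$ into a statement about general $K$, and is the only place where more than bookkeeping with the product order is needed.
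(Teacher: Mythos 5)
Your proposal is correct, and for the displayed formula and the ball-versus-sphere identification it follows the paper exactly: the paper obtains the first assertion from the chain of equalities displayed just before the corollary (specializing \lemstmtref{pastlink}{Kprime} to $\tau=\min(\sigma)$ and rewriting the conjunction as the strict inequality $\vecj\prec\vecv-\min(\sigma)$), and when $\vecv=\max(\sigma)$ it reads off the complete complex on $k$ vertices before the collapse and the boundary of a $(k-1)$-simplex, i.e.\ $\S^{k-2}$, after, just as you do. Where you genuinely diverge is the final claim that $(\tau,\sigma)$ is not an LPDC pair. The paper treats this as immediate from the computation restricted to $\sigma$ (and, later, in the proof of \thmref{collapsingpairs}, passes from $K|_{\sigma}$ to $K$ only by remarking that the past link of $\vecv$ is unchanged outside $K|_{\sigma}$), whereas you prove it for an arbitrary ambient complex: you observe that the collapse removes from $\pastlk{K}{\vecv}$ exactly the one simplex $\mathbf{d}=\vecv-\min(\sigma)$, which is maximal there because $\sigma$ is a maximal cube, and then use the long exact sequence of the pair to show that deleting a maximal $(k-1)$-simplex always changes reduced homology. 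That argument is sound, with one small refinement worth making explicit: the relevant dichotomy is whether $[\partial\mathbf{d}]$ has infinite or finite order in $\tilde H_{k-2}(\pastlk{K'}{\vecv})$ (not merely whether it vanishes) --- in the first case the rank of $\tilde H_{k-2}$ drops, in the second the rank of $\tilde H_{k-1}$ grows, so in either case the homotopy type changes. Your route costs an appeal to homology where the paper stays at the level of the contractible-versus-sphere picture inside $\overline{\sigma}$, but it buys a fully rigorous justification of the ``thus'' for a general ambient complex $K$, a step the paper handles only by assertion.
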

\begin{figure}[th]
    \centering
    \subfloat[Initial Complex\label{subfig:box-minvert-with}]{%
        \includegraphics[height=1in]{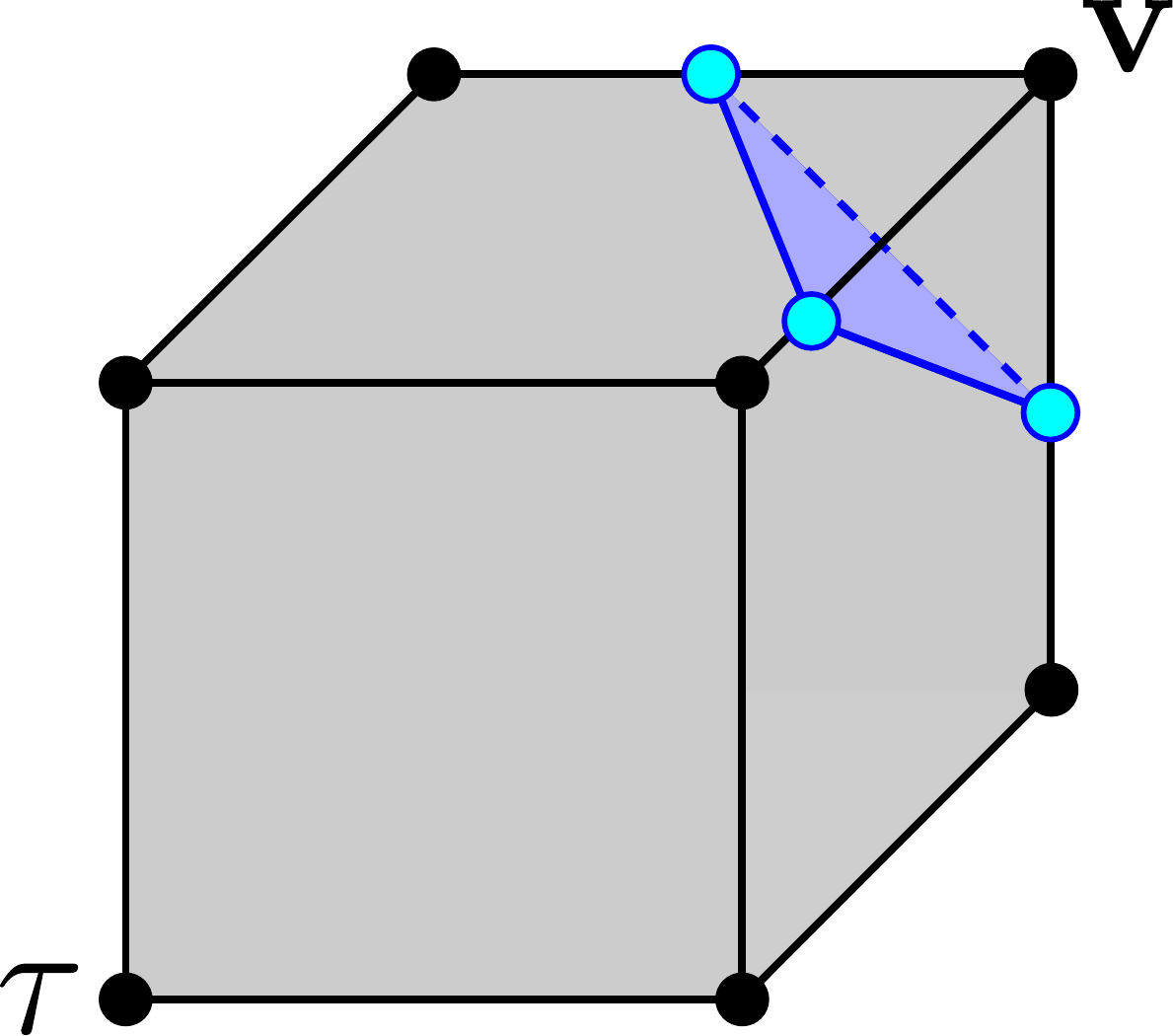}%
    }\hfil
    \subfloat[After Collapse\label{subfig:box-minvert-without}]{%
        \includegraphics[height=1in]{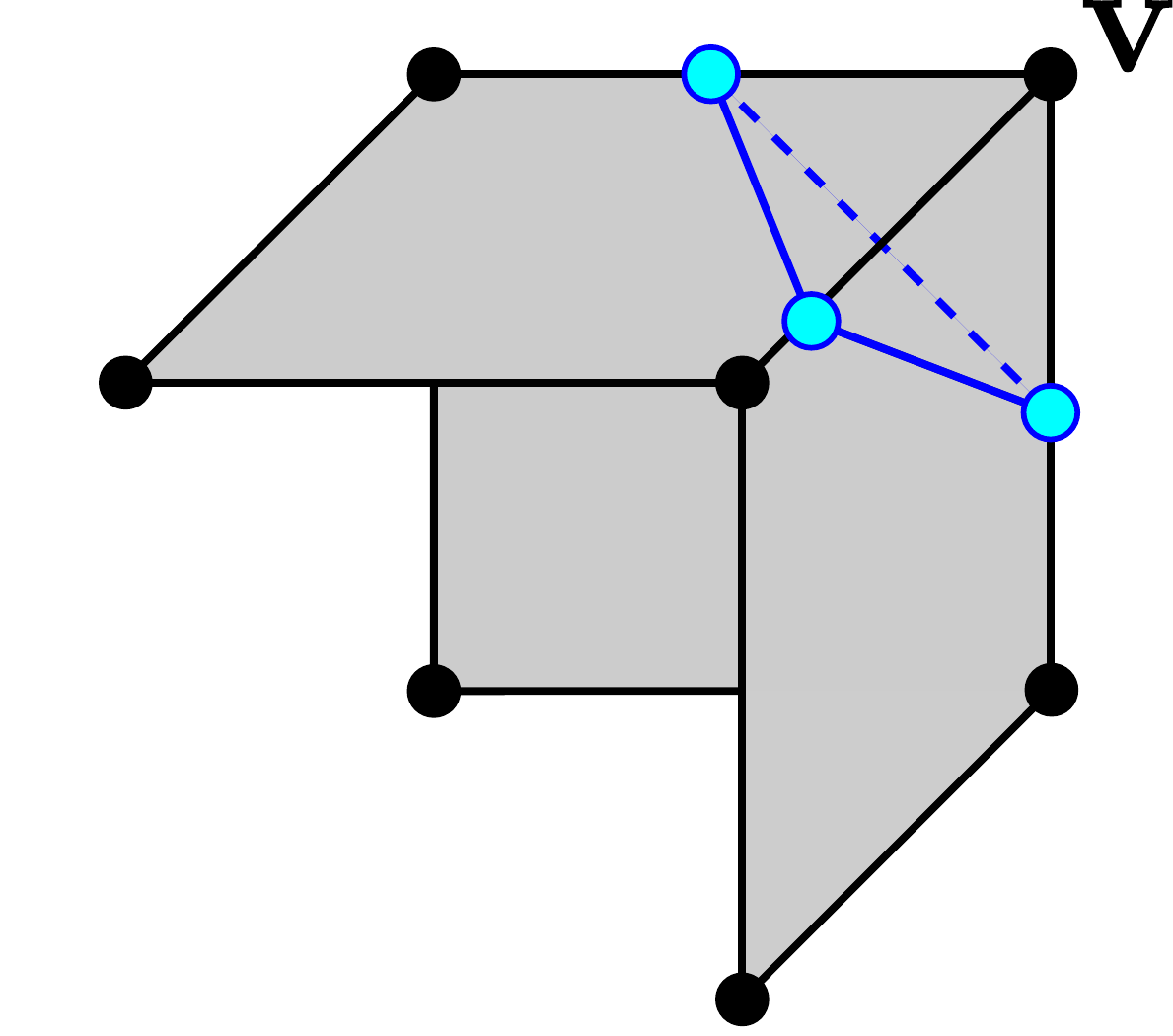}%
    }
    \caption{Removing the minimum vertex of a cube. Consider the directed
    Euclidean cubical complex
    in \protect\subref{subfig:box-minvert-with},
    which as a subset of~$\R^3$ is a single closed three-cube; call this three-cube
    $\sigma$.
    Letting $\tau = \min(\sigma)$, we observe that the past link of
    $\vecv=\max(\sigma)$ is
    contractible before the $(\tau,\sigma)$-collapse and is homeomorphic to
    $\S^1$ after the collapse. Thus, the past links before and after the
    collapse are not homotopy equivalent, and so
    this collapse is not an~LPDC.
    }
    \label{fig:box-minvert}
\end{figure}

The following lemma shows under which condition a directed Euclidean cubical collapse
induces a simplicial collapse in the past link.

\begin{lemma}[Vertex Collapses that Induce Simplicial Collapse of  Past Links]\label{lem:simp-collapse}
    Let $(K,\cubes{K})$ be a directed Euclidean cubical complex in $\R^n$.
    Let $\sigma \in \cubes{K}$ and~$\tau,\vecv \in \verts{\sigma}$
    such that $\tau \preceq \vecv$ and $\tau \neq \min(\sigma)$.
    If $\tau$ is a free face of $\sigma$ and $K'$ is the $(\tau,\sigma)$-collapse,
    then $\pastlk{K'}{\vecv}$ is the $(\vecv-\tau)$-collapse
    of $\pastlk{K}{\vecv}$ in the simplicial setting.
\end{lemma}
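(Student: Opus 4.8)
The plan is to work entirely in the vector notation for simplices of the past link, under which a simplex $\vecj\in\{0,1\}^n\setminus\setzero$ of $\pastlk{K}{\vecv}$ corresponds to the cube $[\vecv-\vecj,\vecv]$, and the face (subset) relation between simplices is exactly the product order $\preceq$ on the corresponding vectors. Since $\tau\preceq\vecv$ and both are vertices of the elementary cube $\sigma$, the difference $\alpha:=\vecv-\tau$ lies in $\{0,1\}^n$; because $\tau\neq\min(\sigma)$ and $\tau$ is a vertex of $\sigma$, we also have $\min(\sigma)\prec\vecv$, so that $\beta:=\vecv-\min(\sigma)$ is a nonzero vector (the degenerate case $\tau=\vecv$ deletes $\vecv$ from $K'$ altogether and may be set aside). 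I first record that the collapse only removes cubes $\gamma$ with $\overline{\tau}\subseteq\overline{\gamma}\subseteq\overline{\sigma}$, all of which are faces of $\sigma$; hence every removed cube with maximum vertex $\vecv$ already lies in $K|_{\sigma}$. Combined with \lemstmtref{plprops}{subcomplex}, this yields $\pastlk{K'}{\vecv}=\pastlk{K}{\vecv}\setminus R$, where $R$ is the set of simplices $\vecj$ whose cube $[\vecv-\vecj,\vecv]$ is deleted, so it suffices to identify $R$ with the set of simplices removed by the simplicial $\alpha$-collapse.

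Next I would compute $R$ explicitly. A cube $[\vecv-\vecj,\vecv]$ is deleted precisely when $\overline{\tau}\subseteq[\vecv-\vecj,\vecv]$ and $[\vecv-\vecj,\vecv]\subseteq\overline{\sigma}$. Using $\tau\preceq\vecv$ and $\vecv\preceq\max(\sigma)$, the first condition rewrites as $\vecv-\vecj\preceq\tau$, equivalently $\alpha\preceq\vecj$, and the second as $\min(\sigma)\preceq\vecv-\vecj$, equivalently $\vecj\preceq\beta$. This gives
$$R=\{\vecj\in\{0,1\}^n\setminus\setzero \mid \alpha\preceq\vecj\preceq\beta\},$$
which matches the difference $\pastlk{K|_{\sigma}}{\vecv}\setminus\pastlk{K'|_{\sigma}}{\vecv}$ computed in \lemstmtref{pastlink}{Kprime}. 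Note that $\tau\neq\min(\sigma)$ is exactly what guarantees $\alpha\prec\beta$, so $\alpha$ is a \emph{proper} face of $\beta$.

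The crux is to verify that $\beta$ is the unique maximal simplex of $\pastlk{K}{\vecv}$ containing $\alpha$, which is what makes $R$ the set deleted by a genuine simplicial collapse. Here the free-face hypothesis on $\tau$ enters. Any simplex $\vecj\in\pastlk{K}{\vecv}$ with $\alpha\preceq\vecj$ corresponds to a cube $c=[\vecv-\vecj,\vecv]$ with $\vecv-\vecj\preceq\tau\preceq\vecv$, so $\tau$ is a vertex, hence a face, of $c$. As $c$ is a face of some maximal cube $M$ of $K$ and $\tau\subseteq c\subseteq M$, the free face $\tau$ lies in $M$; freeness forces $M=\sigma$, so $c$ is a face of $\sigma$ and therefore $\min(\sigma)\preceq\vecv-\vecj$, i.e.\ $\vecj\preceq\beta$. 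Thus every simplex above $\alpha$ is bounded above by $\beta$, and $\beta$ is itself realized by the face $[\min(\sigma),\vecv]$ of $\sigma$, giving the unique maximal coface of $\alpha$.

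Combining these, the simplicial $(\vecv-\tau)$-collapse of $\pastlk{K}{\vecv}$ deletes exactly $\{\vecj \mid \alpha\preceq\vecj\preceq\beta\}=R$, so $\pastlk{K'}{\vecv}=\pastlk{K}{\vecv}\setminus R$ is precisely that collapse. I expect the main obstacle to be the third step: translating the cubical free-face condition on $\tau$ into the simplicial statement that $\alpha=\vecv-\tau$ has a unique maximal coface in the past link. The remaining work — rewriting the deletion conditions as order inequalities and transferring from $K|_{\sigma}$ to $K$ via \lemref{pastlink} and \lemref{plprops} — is routine bookkeeping.
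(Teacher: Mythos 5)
Your argument is correct and follows essentially the same route as the paper's proof: both identify $\vecv-\min(\sigma)$ as the unique maximal coface of $\vecv-\tau$ in the past link (using freeness of $\tau$ to force any cube containing $\tau$ into $\sigma$) and show that the cubical collapse deletes exactly the simplices in the interval between $\vecv-\tau$ and $\vecv-\min(\sigma)$, i.e., performs the simplicial $(\vecv-\tau)$-collapse. The paper packages this by restricting to $\downset{K}{\vecv}$ and invoking \lemstmtref{pastlink}{Kprime} and \lemstmtref{plprops}{upset}, whereas you compute the removed interval directly in the vector notation, but the underlying ideas coincide.
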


\begin{proof}
    Consider $\downset{K}{\vecv}$.
    Since $\tau, \vecv \in \verts{\sigma}$ and $\sigma$ is maximal in $K$, we
    know~$[\min(\sigma),\vecv]$ and~$[\tau, \vecv]$ are elementary cubes
    in~$\downset{K}{\vecv}$.
    Since $\tau$ is a free face of $\sigma$, we further
    know that $[\min(\sigma),\vecv]$ is the only maximal proper coface of $[\tau, \vecv]$ in
    $\downset{K}{\vecv}$.
    By definition of past link (\defref{pl}), we then
    have that~$\vecv-\min(\sigma)$ and $\vecv-\tau$ are simplices
    in~$\pastlk{\downset{K}{\vecv}}{\vecv}$,
    and~$\vecv-\min(\sigma)$ is the only maximal proper
    coface of~$\vecv-\tau$ in $\pastlk{\downset{K}{\vecv}}{\vecv}$.
    Hence, $\vecv-\tau$ is free in~$\pastlk{\downset{K}{\vecv}}{\vecv}$.
    Moreover, $\pastlk{\downset{K'}{\vecv}}{\vecv}$ is the $(\vecv-\tau)$-collapse of
    $\pastlk{\downset{K}{\vecv}}{\vecv}$. One can see this by using
    \lemstmtref{pastlink}{Kprime} by which $\pastlk{\downset{K'}{\vecv}}{\vecv}$ can be characterized as the $(\vecv-\tau)$-collapse of
    $\pastlk{\downset{K}{\vecv}}{\vecv}$.

    By \lemstmtref{plprops}{upset}, we know
    that~$\pastlk{K}{\vecv}=\pastlk{\downset{K}{\vecv}}{\vecv}$
    and that~$\pastlk{K'}{\vecv}=\pastlk{\downset{K'}{\vecv}}{\vecv}$, which concludes this
    proof.
\end{proof}

Next, we prove two lemmas concerning relationships of the past link of a vertex in the original
directed Euclidean cubical complex and in the collapsed directed Euclidean cubical complex.
These relationships depend on where $\vecv$ is located with respect to $\tau$.
In the first lemma, we consider the case where~$\min(\tau) \not\preceq \vecv$,
and we present a  sufficient condition for past links in $K$
and the $(\tau,\sigma)$-collapse to be equal.
See~\figref{box-v-uncomparable} for an example that illustrates the result of
this lemma.
\begin{figure}[t!b!]
    \centering
    \subfloat[Initial Complex\label{subfig:box-v-uncomparable-before}]{%
        \includegraphics[height=1in]{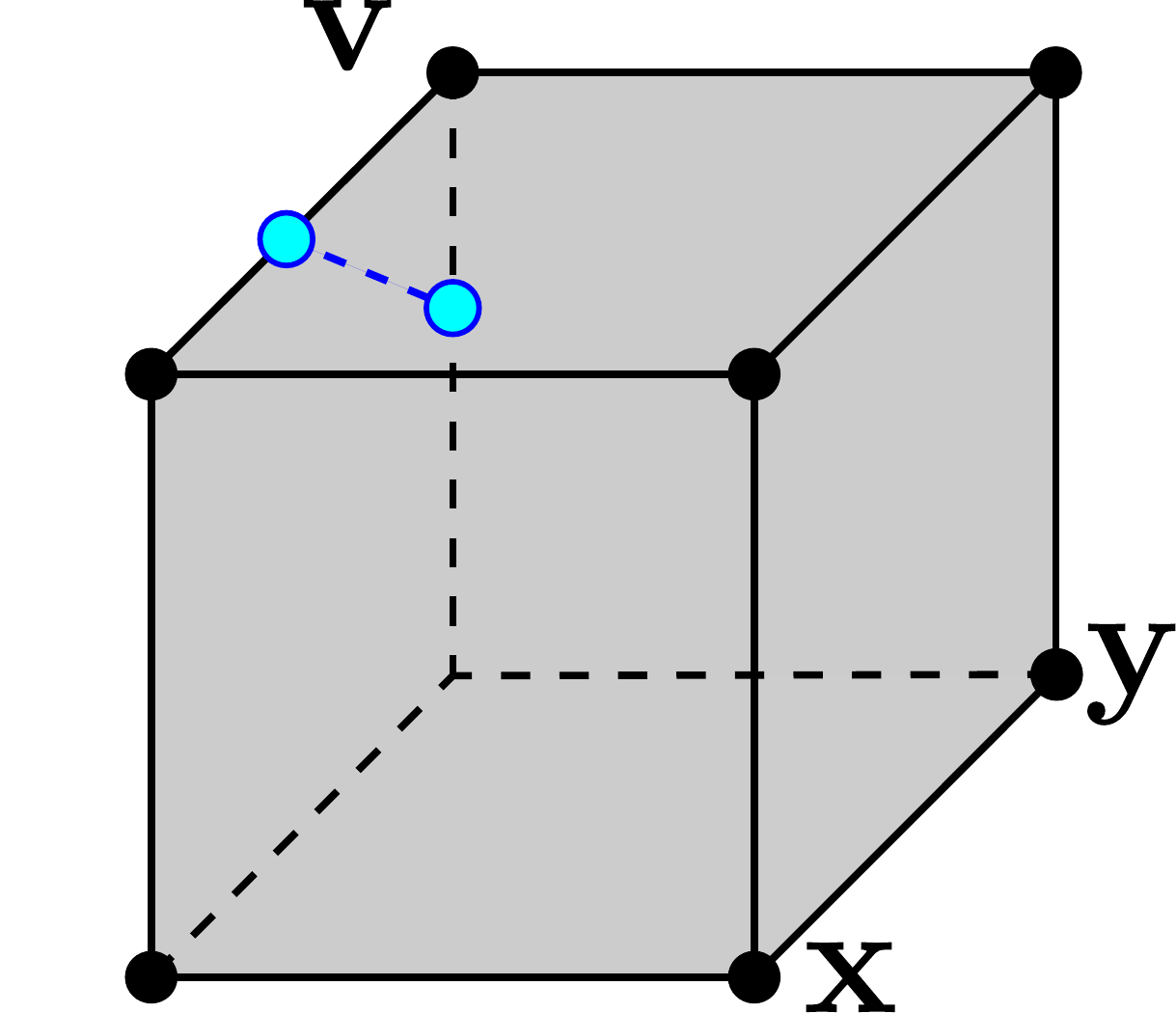}}%
    \hfil
    \subfloat[After Collapse\label{subfig:box-v-uncomparable-after}]{%
        \includegraphics[height=1in]{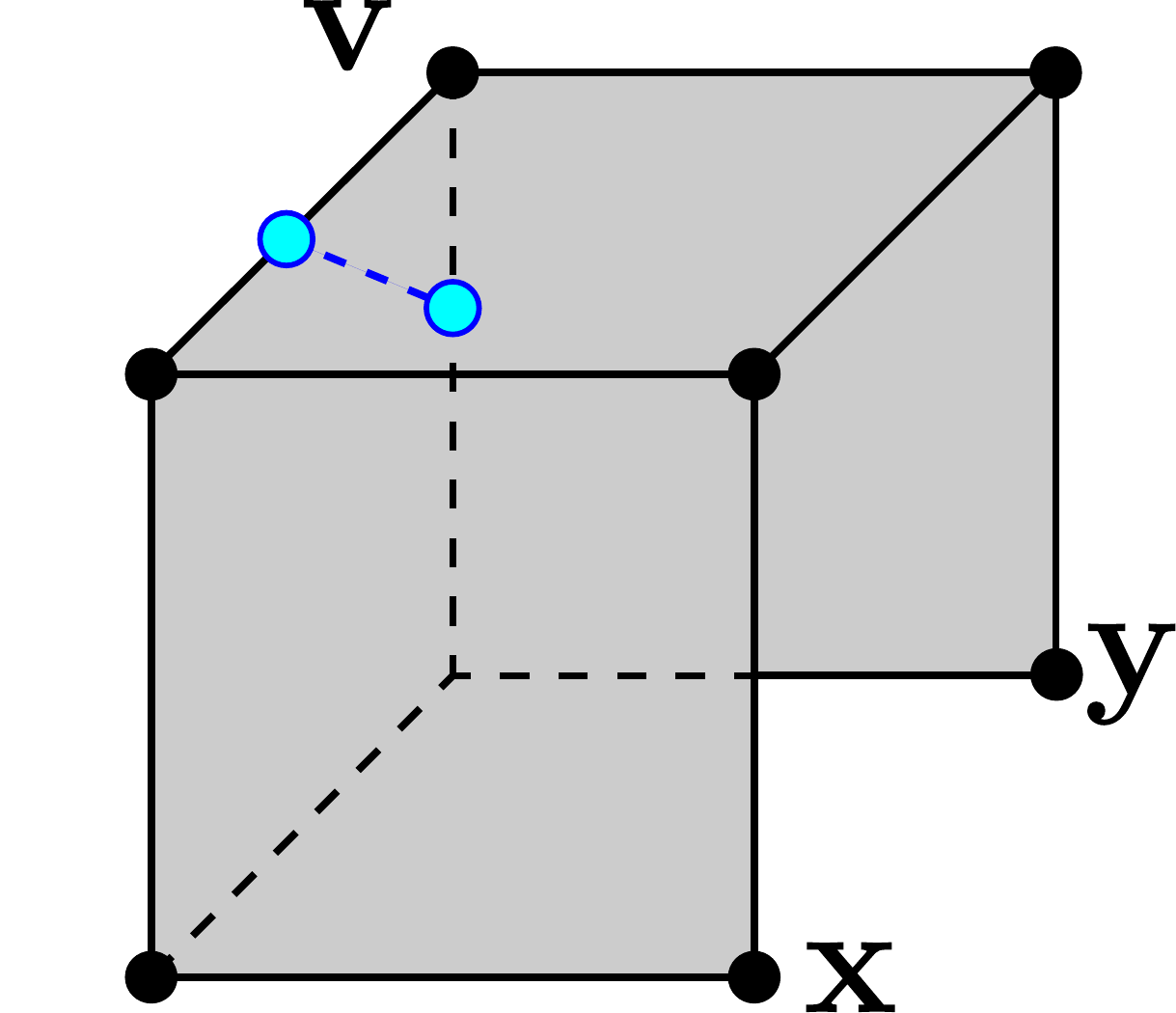}}%
    \caption{
        Past link of an ``uncomparable'' vertex before and after a collapse.
        Consider the directed Euclidean cubical complex shown,
        comprising a single three-cube $\sigma$ and all of its faces.
        Let
        $\tau=[\vecx,\vecy]$.
        Since $\vecv$ and $\max(\tau)=\vecy$ are not comparable, by
        \lemref{v_less_than_tau}, the past link of $\vecv$ is the same before
        and after the collapse.  Indeed, we see that this is the case for this
        example.  The past link of $\vecv$ is the complete complex on two
        vertices, both before and after.
    }
    \label{fig:box-v-uncomparable}
\end{figure}

\begin{lemma}[Condition for Past Links in ${K}$ and ${K'}$ to be Equal]\label{lem:v_less_than_tau}
    Let $(K,\cubes{K})$ be a directed Euclidean cubical complex in $\R^n$.
    Let $\tau, \sigma \in \cubes{K}$ such that $\tau$ is a face of $\sigma$.
    If $\tau$ is a free face of $\sigma$
    and $K'$ is the~$(\tau,\sigma)$-collapse,
    then, for all~$\vecv \in \verts{K}$ such that $\max(\tau) \not\preceq \vecv$,
    we have \mbox{$\pastlk{K}{\vecv}=\pastlk{K'}{\vecv}$.}
\end{lemma}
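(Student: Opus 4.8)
The plan is to prove the set equality $\pastlk{K}{\vecv}=\pastlk{K'}{\vecv}$ by establishing both inclusions, one of which is essentially free. The inclusion $\pastlk{K'}{\vecv} \subseteq \pastlk{K}{\vecv}$ follows immediately from \lemstmtref{plprops}{subcomplex}, since $K'$ is a subcomplex of $K$. So the real content is the reverse inclusion $\pastlk{K}{\vecv} \subseteq \pastlk{K'}{\vecv}$, and the whole argument rests on a single geometric observation: every cube removed by the $(\tau,\sigma)$-collapse has $\tau$ as a face, and a cube having $\tau$ as a face forces its maximum vertex to dominate $\max(\tau)$.

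Concretely, I would take $\vecj \in \pastlk{K}{\vecv}$, so that $[\vecv-\vecj,\vecv] \subseteq K$ by \defref{pl}, and aim to show that this cube survives the collapse, i.e.\ that $[\vecv-\vecj,\vecv] \subseteq K'$. By \eqnref{directedCollapse}, a cube $\gamma$ is deleted exactly when $\overline{\tau} \subseteq \overline{\gamma} \subseteq \overline{\sigma}$; in particular, $\overline{\tau} \subseteq \overline{[\vecv-\vecj,\vecv]}$ would be required for $[\vecv-\vecj,\vecv]$ to be deleted. Translating this face containment into the product order on interval endpoints, $\overline{\tau} \subseteq \overline{[\vecv-\vecj,\vecv]}$ implies $\max(\tau) \preceq \max([\vecv-\vecj,\vecv]) = \vecv$. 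But this contradicts the standing hypothesis $\max(\tau) \not\preceq \vecv$. Hence $[\vecv-\vecj,\vecv]$ is not among the deleted cubes, so it lies in $K'$, giving $\vecj \in \pastlk{K'}{\vecv}$ and completing the reverse inclusion.

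I do not expect a serious obstacle here; the lemma is genuinely a consequence of the bookkeeping that no cube with top vertex $\vecv$ can be erased once $\max(\tau) \not\preceq \vecv$. The one step worth stating carefully is the translation between ``$\tau$ is a face of $[\vecv-\vecj,\vecv]$'' and the endpoint inequality $\max(\tau)\preceq\vecv$, since this is exactly where the hypothesis enters; everything else is the subcomplex monotonicity already recorded in \lemref{plprops}. It is also worth noting that, unlike \lemref{pastlink} and \lemref{simp-collapse}, this argument never uses that $\tau$ or $\vecv$ is a vertex of $\sigma$, so it applies to an arbitrary free face $\tau$.
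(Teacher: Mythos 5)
Your proposal is correct and follows essentially the same route as the paper's proof: the inclusion $\pastlk{K'}{\vecv}\subseteq\pastlk{K}{\vecv}$ via \lemstmtref{plprops}{subcomplex}, and the reverse inclusion by noting that any cube removed by \eqnref{directedCollapse} contains $\overline{\tau}$, so its maximum vertex would dominate $\max(\tau)$, contradicting $\max(\tau)\not\preceq\vecv$. If anything, you make explicit the endpoint-order translation that the paper leaves implicit, which is a welcome clarification rather than a deviation.
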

\begin{proof}
    By \lemstmtref{plprops}{subcomplex}, we have $\pastlk{K'}{\vecv} \subseteq
    \pastlk{K}{\vecv}$.  Thus, we only need to show
    $\pastlk{K}{\vecv} \subseteq \pastlk{K'}{\vecv}$.
    Suppose $\vecj \in \pastlk{K}{\vecv}$. By the definition of the past link
    (see \defref{pl}), we know that
    $[\vecv-\vecj,\vecv]$ is an elementary cube in $K$.
    By assumption, ~$\max(\tau) \not\preceq \vecv$.
    Thus, by
    \eqnref{directedCollapse}, $[\vecv-\vecj,\vecv]$ is not removed from $K$ and
    thus is an elementary cube in $K'$.
    Thus, $\vecj \in  \pastlk{K'}{\vecv}$.
\end{proof}

In the following lemma, we consider the case where $\max(\tau) \preceq \vecv$, and we
present a  sufficient condition for past links in the $(\tau,\sigma)$-collapse
and the~$(\min(\tau),\sigma)$-collapse  to be equal. See~\figref{box-collapse} for an example that illustrates this result.

\begin{figure}[th]
    \centering
    \subfloat[Original Complex\label{subfig:box-collapse-before}]{%
        \includegraphics[height=1in]{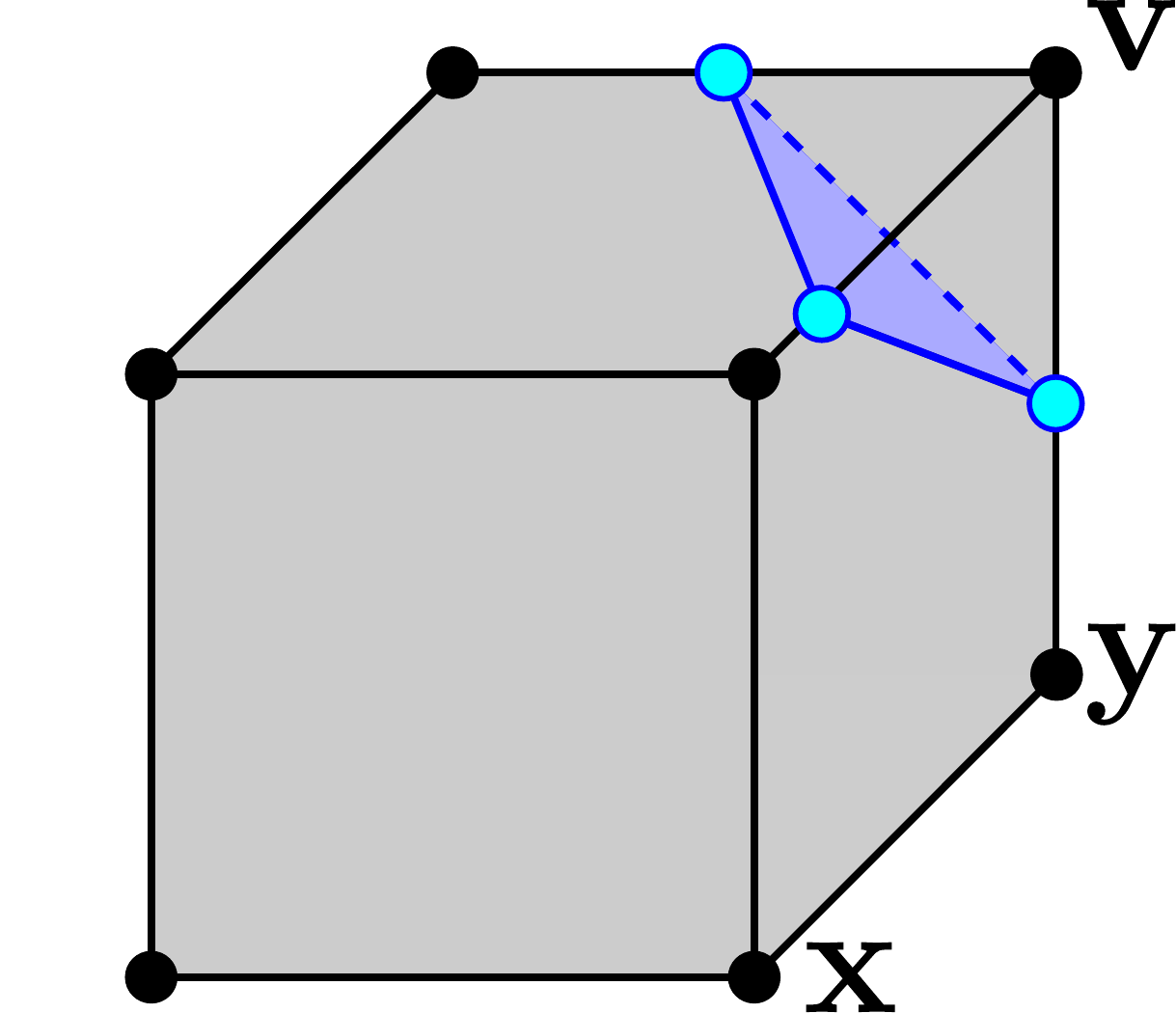}}%
        \hfil
    \subfloat[The $(\tau,\sigma)$-Collapse\label{subfig:box-collapse-edge}]{%
        \includegraphics[height=1in]{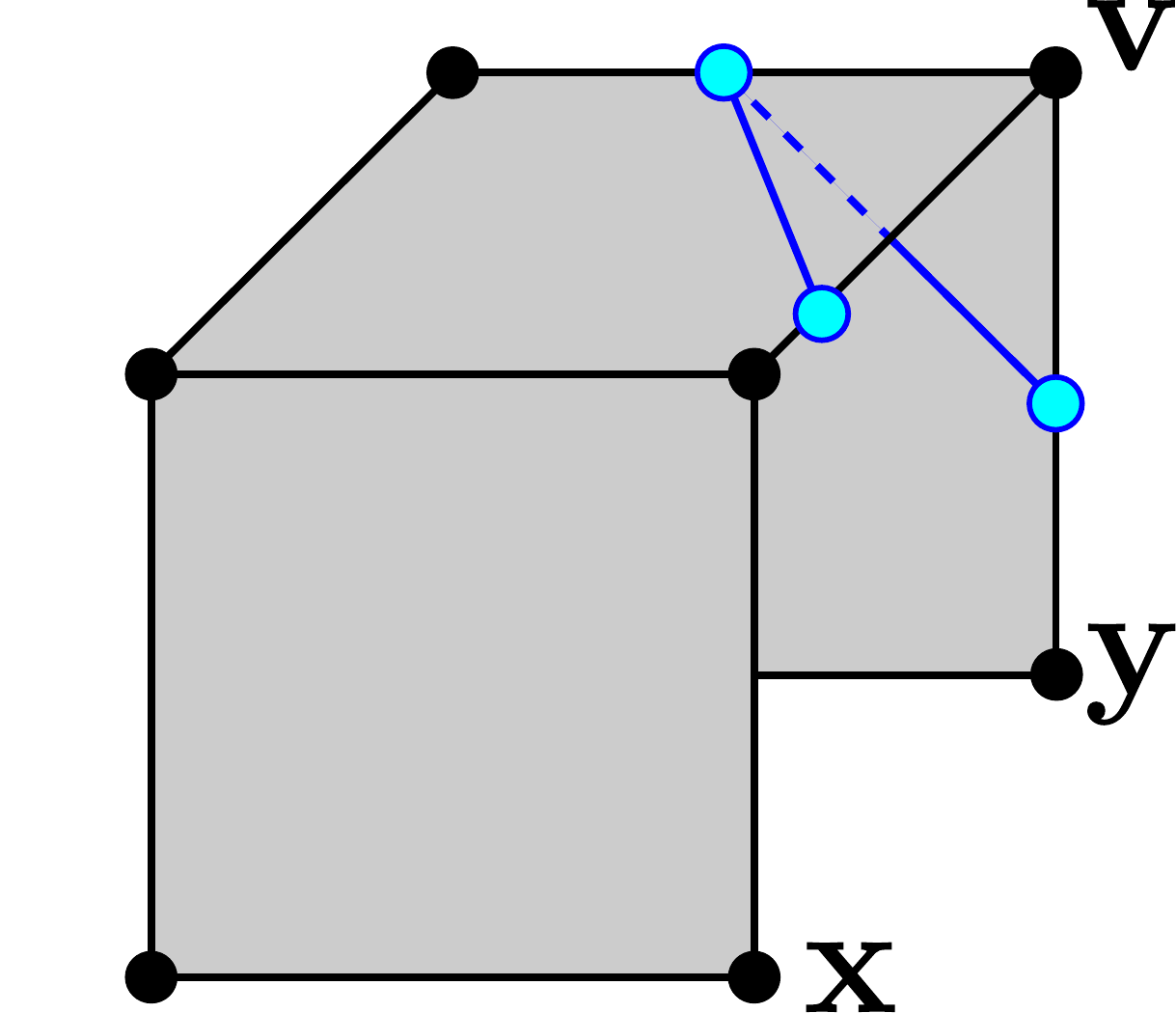}}%
        \hfil
    \subfloat[The $(\vecx,\sigma)$-Collapse\label{subfig:box-collapse-vertex}]{%
        \includegraphics[height=1in]{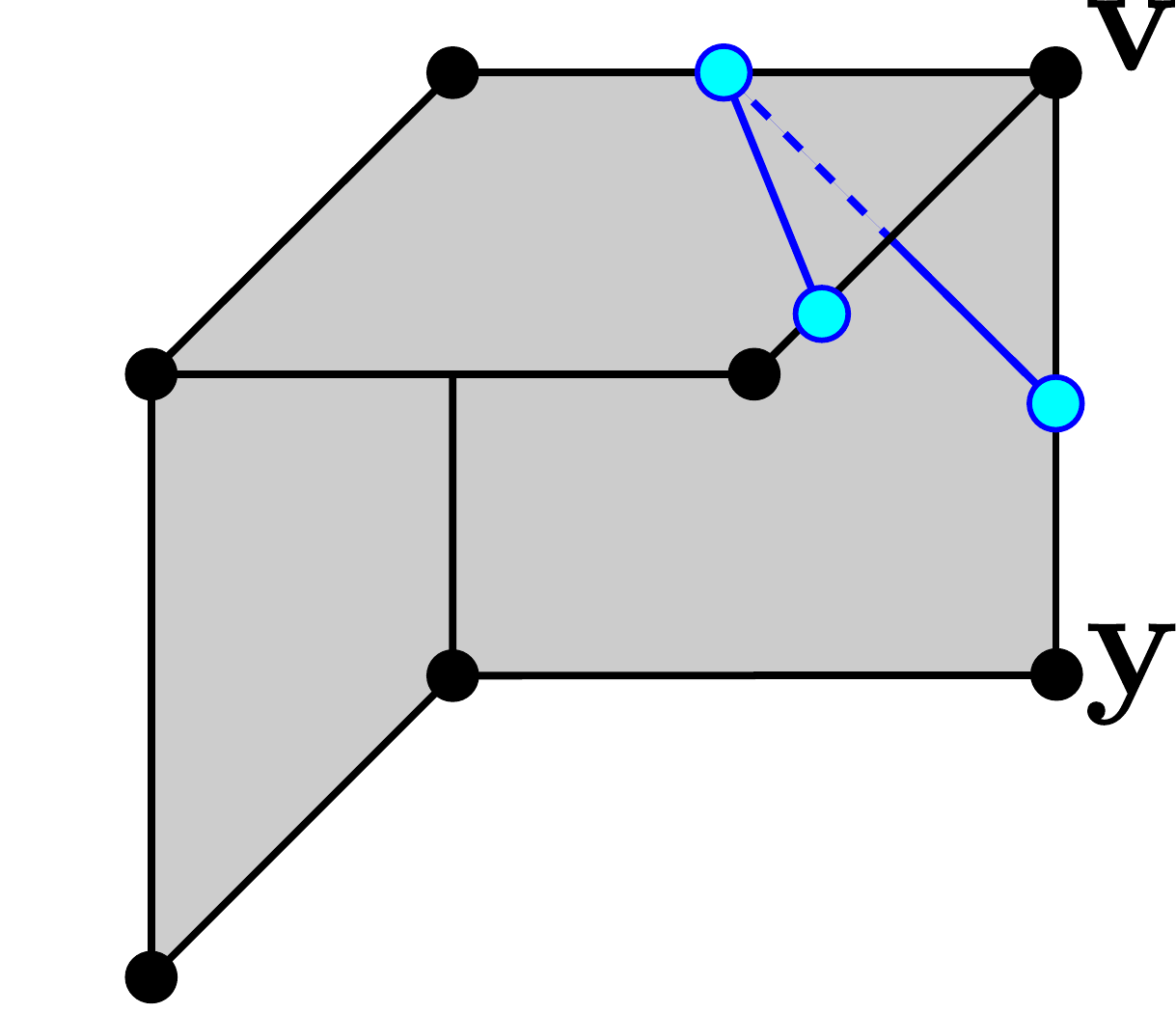}}%
    \caption{
        Two collapses with same past links.
        For example, in the directed Euclidean cubical complex $K$ shown in
        \protect\subref{subfig:box-collapse-before}, let $\sigma$ be the three-cube, and
        let~$\tau=[\vecx,\vecy]$.  We look at the past link of the vertex~$\vecv$.
        In the original directed Euclidean cubical complex, the past link of $\vecv$ is
        the complete complex on three vertices.
        By \lemref{past-links-equal-khat}, the past link of $\vecv$ is the same
        in both the $(\tau,\sigma)$-collapse and the $(\vecx,\sigma)$-collapse since $\max(\tau)=\vecy
        \preceq \vecv$. By \lemref{vertex-collapse}, we also know that the past
        links of $\vecv$ in $K$ and the $(\vecx,\sigma)$-collapse are homotopy equivalent.
        Indeed, we see that this is the case.
    }
    \label{fig:box-collapse}
\end{figure}

\begin{lemma}[Comparing Past Links in a General Collapse with Past Links in a
    Vertex Collapse]\label{lem:past-links-equal-khat}
    Let $(K, \cubes{K})$ be a directed Euclidean cubical complex in $\R^n$ such
    that there exists cubes $\tau, \sigma \in \cubes{K}$ with $\min(\tau)$ a free face of
    $\sigma$. Let $K'$ be the $(\tau, \sigma)$-collapse and let $\Khat$ be the
    $(\min(\tau), \sigma)$-collapse. If $\vecv \in \verts{K'}$ and $\max(\tau) \preceq \vecv$,
    then~$\vecv \in \verts{\Khat}$ and~$\pastlk{K'}{\vecv}=\pastlk{\Khat }{\vecv}$.
\end{lemma}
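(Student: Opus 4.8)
The plan is to show that $\Khat$ is a subcomplex of $K'$, deduce one inclusion of past links from this for free, and then use the hypothesis $\max(\tau)\preceq\vecv$ to get the reverse inclusion. First I would compare the two collapses directly from \eqnref{directedCollapse}. The cubes removed in forming $K'$ are $\{\gamma \mid \overline{\tau}\subseteq\overline{\gamma}\subseteq\overline{\sigma}\}$, while those removed in forming $\Khat$ are $\{\gamma\mid\overline{\min(\tau)}\subseteq\overline{\gamma}\subseteq\overline{\sigma}\}$. Since $\min(\tau)\in\overline{\tau}$, every cube removed in forming $K'$ is also removed in forming $\Khat$; hence $\Khat\subseteq K'$, so $\Khat$ is a subcomplex of $K'$. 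By \lemstmtref{plprops}{subcomplex}, this immediately yields the inclusion $\pastlk{\Khat}{\vecv}\subseteq\pastlk{K'}{\vecv}$.

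Next I would verify $\vecv\in\verts{\Khat}$. The only zero-cube removed in passing from $K$ to $\Khat$ is $\min(\tau)$ itself, so $\verts{\Khat}=\verts{K}\setminus\{\min(\tau)\}$. Since $\vecv\in\verts{K'}\subseteq\verts{K}$, it suffices to rule out $\vecv=\min(\tau)$. But $\min(\tau)\preceq\max(\tau)\preceq\vecv$, so $\vecv=\min(\tau)$ would force $\min(\tau)=\max(\tau)=\vecv$, i.e.\ $\tau=\{\vecv\}$; in that case $\vecv$ is exactly the free face removed in forming $K'$, contradicting $\vecv\in\verts{K'}$. Hence $\vecv\neq\min(\tau)$ and $\vecv\in\verts{\Khat}$.

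The heart of the argument is the reverse inclusion $\pastlk{K'}{\vecv}\subseteq\pastlk{\Khat}{\vecv}$, and this is where $\max(\tau)\preceq\vecv$ does the work. Take $\vecj\in\pastlk{K'}{\vecv}$, so by \defref{pl} the closed cube $[\vecv-\vecj,\vecv]$ survives in $K'$. Suppose for contradiction it is removed in forming $\Khat$; then $\overline{\min(\tau)}\subseteq[\vecv-\vecj,\vecv]\subseteq\overline{\sigma}$, which in particular gives $\vecv-\vecj\preceq\min(\tau)$. Combining $\vecv-\vecj\preceq\min(\tau)$ with $\max(\tau)\preceq\vecv$ yields $\overline{\tau}=[\min(\tau),\max(\tau)]\subseteq[\vecv-\vecj,\vecv]$, and since also $[\vecv-\vecj,\vecv]\subseteq\overline{\sigma}$, the cube $[\vecv-\vecj,\vecv]$ meets the removal condition of \eqnref{directedCollapse} for $K'$, contradicting $\vecj\in\pastlk{K'}{\vecv}$. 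Thus $[\vecv-\vecj,\vecv]$ is not removed in forming $\Khat$, so $\vecj\in\pastlk{\Khat}{\vecv}$. The two inclusions give $\pastlk{K'}{\vecv}=\pastlk{\Khat}{\vecv}$.

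I expect the main obstacle to be careful bookkeeping of exactly which cubes each collapse deletes and of the minimum and maximum vertices of the intermediate cube $[\vecv-\vecj,\vecv]$. The conceptual crux, however, is recognizing that $\max(\tau)\preceq\vecv$ is precisely the condition that upgrades ``$[\vecv-\vecj,\vecv]$ contains $\min(\tau)$'' to ``$[\vecv-\vecj,\vecv]$ contains all of $\tau$'': this is what makes the $(\tau,\sigma)$-collapse and the $(\min(\tau),\sigma)$-collapse indistinguishable from the viewpoint of the past link at $\vecv$.
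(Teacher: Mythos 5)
Your proposal is correct and takes essentially the same route as the paper: the paper also obtains $\pastlk{\Khat}{\vecv}\subseteq\pastlk{K'}{\vecv}$ from the subcomplex relation (\lemstmtref{plprops}{subcomplex}), shows $\vecv\in\verts{\Khat}$ by ruling out $\vecv=\min(\tau)$, and proves the reverse inclusion by splitting on whether $\vecv-\vecj\preceq\min(\tau)$, using exactly your observation that $\max(\tau)\preceq\vecv$ upgrades ``$[\vecv-\vecj,\vecv]$ contains $\min(\tau)$'' to ``$[\vecv-\vecj,\vecv]$ contains $\overline{\tau}$.'' One small point of precision: $\vecj\notin\pastlk{\Khat}{\vecv}$ only says that some face of $[\vecv-\vecj,\vecv]$ lies in the removal set for $\Khat$ (which still gives $\vecv-\vecj\preceq\min(\tau)$, but not necessarily $[\vecv-\vecj,\vecv]\subseteq\overline{\sigma}$); the contradiction nevertheless goes through because $\overline{\tau}\subseteq[\vecv-\vecj,\vecv]$ while the stratum $\tau$ is deleted from $K'$, so $[\vecv-\vecj,\vecv]\nsubseteq K'$.
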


\begin{proof}
    We first show $\vecv \in \verts{\Khat}$. If $\tau$ is a zero-cube (and hence
    in~$\verts{K}$), then $K'=\Khat$, which means that $\vecv \in \verts{\Khat}$.
    On the other hand, if~$\tau$ is not a zero-cube, then
    we have $\min(\tau)\prec
    \max(\tau)\preceq \vecv$.  In particular,~$\min(\tau) \neq \vecv$.
    And so, by definition of $\Khat$ as a $(\min(\tau), \sigma)$-collapse and
    since $\vecv \in \cubes{K}$, we conclude that $\vecv \in \Khat$.

    Next, we show $\pastlk{K'}{\vecv}=\pastlk{\Khat }{\vecv}$.
    By \lemstmtref{plprops}{subcomplex}, we have~$\pastlk{\Khat}{\vecv} \subseteq
    \pastlk{K'}{\vecv}$.  Thus, what remains to be proven is
    \mbox{$\pastlk{K'}{\vecv} \subseteq \pastlk{\Khat}{\vecv}$.}
    Let $\vecj\in  \pastlk{K'}{\vecv}$. By definition of the past link
    (\defref{pl}), we
    know that~$[\vecv-\vecj,\vecv] \subseteq K'$.
    Consider two cases: $\vecv -\vecj \preceq \min(\tau)$ and $\vecv -\vecj
    \not\preceq \min(\tau)$.

    Case 1 ($\vecv -\vecj \preceq \min(\tau)$):
    Since $\vecv-\vecj \preceq \min(\tau)
    \preceq \max(\tau) \preceq \vecv$, we know
    that $\overline{\tau} \subseteq [\vecv-\vecj,\vecv]$.  Thus,
    by \eqnref{directedCollapse}, we have $[\vecv-\vecj,\vecv] \nsubseteq K'$, which
    is a contradiction.  So, Case 1 cannot happen.

    Case 2 ($\vecv -\vecj \not\preceq \min(\tau)$): If $\vecv -\vecj \not\preceq
    \min(\tau)$, then, by the definition of a~$(\min(\tau),\sigma)$-collapse
    in \defref{directedCollapse},
    we know that~$[\vecv-\vecj,\vecv] \subseteq \Khat$
    and thus~$\vecj \in \pastlk{\Khat}{\vecv}$.

    Hence, $\pastlk{K'}{\vecv} \subseteq \pastlk{\Khat}{\vecv}$. Since we have
    both subset inclusions, we conclude~$\pastlk{K'}{\vecv}=\pastlk{\Khat}{\vecv}$.
\end{proof}

In general, the minimal vertex of $\tau$ is not free in $K$ and hence, there is no vertex collapse.
In the main theorem, the previous lemma is applied to a subcomplex of $K$;
specifically, it is applied to the restriction to the unit cube corresponding to $\sigma$,
where all vertices, inculding $\min(\tau)$ are then free. The results  carry over to $K$.

The next result states that vertex collapses result in homotopy equivalent past links as long as we
are not collapsing the minimum vertex of the directed Euclidean cubical complex.

\begin{lemma}[Past Links in a Vertex Collapse]\label{lem:vertex-collapse}
    Let $(K,\cubes{K})$ be a directed Euclidean cubical complex in $\R^n$.
    Let $\sigma \in \cubes{K}$ and let~$\tau \in \verts{\sigma}$ such that $\tau
    \neq \min(\sigma)$.
    Let~$\vecv \in \verts{K}$ with $\vecv \neq \tau$.
    If $\tau$ is a free face of $\sigma$ and $K'$ is the $(\tau,\sigma)$-collapse,
    then~$\pastlk{K}{\vecv} \simeq \pastlk{K'}{\vecv}$.
    % note: since \tau is a vertex, \Khat=K'
\end{lemma}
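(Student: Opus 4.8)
The plan is to prove the homotopy equivalence by a case analysis on how $\vecv$ sits relative to $\tau$ and $\sigma$, reducing each case to one of the preceding lemmas. The key simplification I would exploit is that $\tau \in \verts{\sigma}$ is a zero-cube, so $\min(\tau) = \max(\tau) = \tau$; this lets me apply the earlier lemmas with $\max(\tau)$ replaced by $\tau$ throughout.

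First I would dispose of the case $\tau \not\preceq \vecv$. Since $\max(\tau) = \tau \not\preceq \vecv$, \lemref{v_less_than_tau} applies directly and yields the stronger conclusion $\pastlk{K}{\vecv} = \pastlk{K'}{\vecv}$, hence in particular $\pastlk{K}{\vecv} \simeq \pastlk{K'}{\vecv}$.

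It then remains to treat $\tau \preceq \vecv$; since $\vecv \neq \tau$, this means $\tau \prec \vecv$. I would split this into two subcases according to whether $\vecv$ is a vertex of $\sigma$. When $\vecv \in \verts{\sigma}$, all hypotheses of \lemref{simp-collapse} are met ($\tau, \vecv \in \verts{\sigma}$, $\tau \preceq \vecv$, and $\tau \neq \min(\sigma)$ with $\tau$ free), so that lemma identifies $\pastlk{K'}{\vecv}$ as the $(\vecv - \tau)$-collapse of $\pastlk{K}{\vecv}$ in the simplicial setting. Because simplicial collapses preserve homotopy type (\cite[Proposition 6.14]{kozlovcombinatorial2007}, as recalled in the remark following \defref{directedCollapse}), this immediately gives $\pastlk{K}{\vecv} \simeq \pastlk{K'}{\vecv}$; this subcase carries the only genuine homotopy-theoretic content of the lemma. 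When $\vecv \notin \verts{\sigma}$, I claim the two past links are in fact equal. One inclusion is \lemstmtref{plprops}{subcomplex}; for the reverse, I would take $\vecj \in \pastlk{K}{\vecv}$ and check that $[\vecv-\vecj,\vecv]$ survives the collapse of \eqnref{directedCollapse}. If it were removed, then $\overline{[\vecv-\vecj,\vecv]} \subseteq \overline{\sigma}$ would force $\min(\sigma) \preceq \vecv-\vecj \preceq \vecv \preceq \max(\sigma)$, and since $\vecv \in \Z^n$ this would make $\vecv$ a vertex of $\sigma$, contradicting $\vecv \notin \verts{\sigma}$. Hence $[\vecv-\vecj,\vecv] \subseteq K'$ and $\vecj \in \pastlk{K'}{\vecv}$, giving $\pastlk{K}{\vecv} = \pastlk{K'}{\vecv}$.

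I expect the main obstacle to be organizational rather than technical: the substantive step (the simplicial-collapse subcase) is essentially handed to us by \lemref{simp-collapse}, so the real care goes into making the case split exhaustive. In particular, the ``far'' vertices satisfying $\tau \preceq \vecv$ but $\vecv \notin \verts{\sigma}$ are covered by neither \lemref{v_less_than_tau} nor \lemref{simp-collapse}, and they require the short direct equality argument above; overlooking them would leave a genuine gap in the proof.
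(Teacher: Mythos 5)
Your proposal is correct and follows essentially the same route as the paper's proof: the same case analysis settling $\tau \not\preceq \vecv$ by \lemref{v_less_than_tau}, the case $\vecv \in \verts{\sigma}$ with $\tau \preceq \vecv$ by \lemref{simp-collapse} together with homotopy invariance of simplicial collapses, and the remaining vertices outside $\verts{\sigma}$ by observing that their past links are unchanged (the paper's Case 1, which it disposes of directly from \defref{pl} without restricting to $\tau \preceq \vecv$). Your more explicit equality argument in that last case is a harmless elaboration of what the paper leaves implicit, not a different approach.
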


\begin{proof}
    We consider three cases:

    Case 1 ($\vecv \notin \verts{\sigma}$):
    By definition of past link (\defref{pl}), if $\vecv \notin \verts{\sigma}$,
    then the past links $\pastlk{K}{\vecv}$ and $\pastlk{K'}{\vecv}$ are
    equal.

    Case 2 ($\tau \not\preceq \vecv$):
    By \lemref{v_less_than_tau}, if $\tau=\max(\tau) \not\preceq \vecv$, again we have
    equality of the past links $\pastlk{K}{\vecv}$
    and~$\pastlk{K'}{\vecv}$.

    Case 3 ($\vecv \in \verts{\sigma}$ and $\tau \preceq \vecv$):
    By \lemref{simp-collapse}, we know that
    $\pastlk{K'}{\vecv}$ is the $\vecv-\tau$-collapse of
    $\pastlk{K}{\vecv}$ in the simplicial setting. Since simplicial
    collapses preserve the homotopy type
    (see e.g.,~\cite[Proposition 6.14]{kozlovcombinatorial2007}), we
    conclude~$\pastlk{K}{\vecv}\simeq \pastlk{K'}{\vecv}$.

\end{proof}

We give an example of \lemref{vertex-collapse} in \figref{box-collapse} by showing how
the LPDC induces a simplicial collapse on past links.

Lastly, we are ready to prove the main result.
\begin{theorem}[Main Theorem]\label{thm:collapsingpairs}
    Let $(K,\cubes{K})$ be a directed Euclidean cubical complex in $\R^n$ such
    that there exist cubes $\tau, \sigma \in \cubes{K}$ with $\tau$ a free face of
    $\sigma$.
    Then, $(\tau,\sigma)$ is an LPDC pair
    if and only if~$\min(\sigma) \notin \verts{\tau}$.
\end{theorem}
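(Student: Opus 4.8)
The plan is to prove the two implications separately; in each direction the analysis concentrates on the past links that the collapse actually disturbs. By \eqnref{directedCollapse} every removed cube lies in $\overline{\sigma}$, so if $\vecv\notin\verts{\sigma}$ then no removed cube has maximum vertex $\vecv$ and $\pastlk{K}{\vecv}=\pastlk{K'}{\vecv}$; likewise \lemref{v_less_than_tau} gives equality whenever $\max(\tau)\not\preceq\vecv$. Hence the only interesting vertices are $\vecv\in\verts{\sigma}$ with $\max(\tau)\preceq\vecv$. For such a $\vecv$, a direct computation from \eqnref{directedCollapse} (in the spirit of \lemref{pastlink}) shows that the simplices deleted from $\pastlk{K}{\vecv}$ are exactly those $\vecj$ with $\min(\sigma)\preceq\vecv-\vecj\preceq\min(\tau)$; writing $\alpha:=\vecv-\min(\tau)$ and $g:=\vecv-\min(\sigma)$, this deleted set is precisely the interval $\{\gamma\mid\alpha\subseteq\gamma\subseteq g\}$ of simplices lying between $\alpha$ and $g$.

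For the reverse implication, assume $\min(\sigma)\notin\verts{\tau}$, equivalently $\min(\tau)\neq\min(\sigma)$, so that $\alpha\subsetneq g$ is a proper face. I would show that $g$ is the unique maximal simplex of $\pastlk{K}{\vecv}$ containing $\alpha$: if $\vecj\supseteq\alpha$ lies in $\pastlk{K}{\vecv}$, then $\overline{\tau}\subseteq[\vecv-\vecj,\vecv]\subseteq K$, so any maximal cube of $K$ containing $[\vecv-\vecj,\vecv]$ also contains $\tau$; since $\tau$ is a free face of $\sigma$, that maximal cube must be $\sigma$, forcing $[\vecv-\vecj,\vecv]\subseteq\overline{\sigma}$ and hence $\vecj\subseteq g$. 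Thus the deleted interval exhibits $\pastlk{K'}{\vecv}$ as the $\alpha$-collapse of $\pastlk{K}{\vecv}$ in the simplicial setting, and the two past links are homotopy equivalent because simplicial collapses preserve homotopy type. As this holds for every relevant $\vecv$, the pair $(\tau,\sigma)$ is an LPDC pair. This argument generalizes \lemref{simp-collapse} and \lemref{vertex-collapse} from a vertex $\tau$ to an arbitrary free face, and so subsumes the $K|_{\sigma}$-restriction discussion: the freeness of $\tau$ in all of $K$ is exactly what promotes the local collapse to a collapse of the global past link.

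For the forward implication I argue the contrapositive: assume $\min(\sigma)\in\verts{\tau}$, so $\min(\tau)=\min(\sigma)$ and $\alpha=g$. Take $\vecv=\max(\sigma)$, which remains in $\verts{K'}$ since a proper face $\tau$ of $\sigma$ cannot equal the single vertex $\max(\sigma)$. Now the deleted interval degenerates to the single simplex $g=\max(\sigma)-\min(\sigma)$, a $(k-1)$-simplex with $k=\dim(\sigma)\geq 1$, and $g$ is maximal in $\pastlk{K}{\vecv}$ because $\sigma$ is a maximal cube. Therefore $\pastlk{K'}{\max(\sigma)}=\pastlk{K}{\max(\sigma)}\setminus\{g\}$ differs from $\pastlk{K}{\max(\sigma)}$ by exactly one simplex, so their Euler characteristics differ by $(-1)^{k-1}\neq 0$. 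Since the Euler characteristic is a homotopy invariant, the two past links are not homotopy equivalent, and $(\tau,\sigma)$ is not an LPDC pair; this recovers and extends \corref{minvertexcollapse}.

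The main obstacle is the uniqueness/maximality step in the reverse direction, namely ruling out that a cube of $K$ lying outside $\overline{\sigma}$ contributes a simplex of $\pastlk{K}{\vecv}$ properly containing $\alpha$. A priori the cube constraint $\vecv-\vecj\succeq\vecv-\one$ permits cofaces of $\alpha$ coming from cubes that dip below $\min(\sigma)$, which would stop $g$ from being the unique maximal simplex and so break the collapse; the crux of the proof is that freeness of $\tau$ in $K$ forbids exactly these cubes, upgrading the collapse seen locally in $K|_{\sigma}$ to a genuine simplicial collapse of the full past link $\pastlk{K}{\vecv}$.
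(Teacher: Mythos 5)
Your proof is correct, but it takes a genuinely different route from the paper's. The paper localizes: it restricts to $L=K|_{\sigma}=\overline{\sigma}$, compares the $(\tau,\sigma)$-collapse of $L$ with the vertex collapse at $\min(\tau)$ via \lemref{past-links-equal-khat}, invokes \lemref{vertex-collapse} (which rests on \lemref{simp-collapse}) for the backward direction and \corref{minvertexcollapse} (the $\B^{k-1}$ versus $\S^{k-2}$ computation) for the forward one, and then transfers the conclusion back to $K$ by arguing that nothing changes outside $L$. You instead work globally in $K$: you compute that the simplices deleted from $\pastlk{K}{\vecv}$ form exactly the interval between $\alpha=\vecv-\min(\tau)$ and $g=\vecv-\min(\sigma)$, and you use freeness of $\tau$ in all of $K$ to show $g$ is the unique maximal simplex of $\pastlk{K}{\vecv}$ containing $\alpha$; hence when $\min(\tau)\neq\min(\sigma)$ the past link of $\vecv$ in $K'$ is literally the $\alpha$-collapse of the past link in $K$ (a genuine generalization of \lemref{simp-collapse} from vertex $\tau$ to arbitrary free faces), while when $\min(\tau)=\min(\sigma)$ the deleted set degenerates to the single maximal simplex $g$ and an Euler-characteristic count rules out homotopy equivalence at $\vecv=\max(\sigma)$. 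Your route buys a single unified mechanism, dispenses with \lemref{past-links-equal-khat}, \corref{minvertexcollapse}, and the restrict-then-extend step (the paper's ``the past link remains the same outside of $L$'' argument), and isolates exactly where freeness in $K$ is needed; the paper's route buys maximal reuse of its previously proved lemmas. Two small points to tighten: the case $\vecv\notin\verts{\sigma}$ is not settled merely because no removed cube has maximum vertex $\vecv$ (a past-link simplex is deleted whenever its cube \emph{contains} a removed cube), but your own freeness argument---any cube of $K$ with maximum vertex $\vecv$ containing $\overline{\tau}$ lies in $\overline{\sigma}$---closes this; and $\max(\sigma)\in\verts{K'}$ in the forward direction holds because $\min(\sigma)\in\verts{\tau}$ forces $\tau\neq\{\max(\sigma)\}$, not because a proper face can never be the maximum vertex.
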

\begin{proof}
    Let~$\vecv=\max(\sigma)$ and~$k=\dim(\sigma)$.
    Let $(K', \cubes{K}')$ be the $(\tau, \sigma)$-collapse of $K$.
    Let $(L,\cubes{L})$ be the cubical complex such that $L = K|_{\sigma}$.  Since $\sigma
    \in K$, we know $L=\overline{\sigma}$ (i.e., $L$ is a unit cube).
    Since $L$ is a single unit cube and $\sigma$ is a maximal elementary cube,
    all proper faces of $\sigma$, including
    $\tau$ and $\min(\tau)$, are free faces in $L$.
    Thus, let $(L', \cubes{L}')$
    be the~$(\tau, \sigma)$-collapse of $L$,
    and let $(\widehat{L},\widehat{\cubes{L}})$ be the~$(\min(\tau),
    \sigma)$-collapse of~$L$.

    We first prove the forward direction by contrapositive (if \mbox{$\min(\sigma)
    \in \verts{\tau}$}, then $(\tau,\sigma)$ is not an LPDC pair).
    Assume~$\min(\sigma) \in \verts{\tau}$.
    By \corref{minvertexcollapse}, we obtain~$\pastlk{L}{\vecv}$ is
    homeomorphic to~$\B^{k-1}$ and $\pastlk{\widehat{L}}{\vecv}$ is homeomorphic
    to~$\S^{k-2}$.
    Since $\min(\sigma) \in \verts{\tau}$, we know that \mbox{$\min(\sigma)=\min(\tau)$.}
    Since~$\tau$ is a face of~$\sigma$, we know~$\max(\tau) \preceq
    \max(\sigma)=\vecv$.
    Since $\min(\sigma)=\min(\tau) \in \verts{\tau}$ and since~$\tau$
    is a proper face of $\sigma$, we know that
    \mbox{$\vecv \neq \max(\tau)$.}
    Thus,~$\vecv \in \verts{L'}$.
    Applying \lemref{past-links-equal-khat}, we
    obtain $\pastlk{L'}{\vecv}=\pastlk{\widehat{L}}{\vecv}$.
    Putting this all together, we~have:
    $$\pastlk{L}{\vecv} \simeq \B^{k-1} \not\simeq \S^{k-2}
    \simeq \pastlk{\widehat{L}}{\vecv} = \pastlk{L'}{\vecv},$$
    and so $\pastlk{L}{\vecv} \not\simeq \pastlk{L'}{\vecv}$.

    Since no faces of $\sigma$ are in $\cubes{K} \setminus \cubes{L}$,
    the past link of~$\vecv$ remains the same outside of $L$ in both~$K$
    and~$K'$.
    Thus,~$\pastlk{K}{\vecv} \not\simeq \pastlk{K'}{\vecv}$ and so we conclude
    that~$(\tau, \sigma)$ is not an LPDC pair, as was to be~shown.

    Next, we show the backwards direction. Suppose $\min(\sigma) \not\in
    \verts{\tau}$. Let~$\vecv \in
    \verts{K'}$, and consider two cases:
    $\max(\tau) \npreceq \vecv$ and $\max(\tau) \preceq \vecv$.

    Case 1 ($\max(\tau) \npreceq \vecv$):
    By~\lemref{v_less_than_tau}, we have $\pastlk{K}{\vecv}=\pastlk{K'}{\vecv}$.
    Hence,~$\pastlk{K}{\vecv}\simeq \pastlk{K'}{\vecv}$. Since~$\vecv$ was
    arbitrarily chosen, we conclude that~$(\tau,\sigma)$ is an LPDC pair.

    Case 2 ($\max(\tau)\preceq \vecv$):
    By~\lemref{past-links-equal-khat}, we have that
    $\pastlk{L'}{\vecv}= \pastlk{\widehat{L}}{\vecv}$.
    Since~$\min(\sigma) \not\in \verts{\tau}$, we know that $\min(\tau)\neq \min(\sigma)$.
    Applying \lemref{vertex-collapse}, we obtain
    $\pastlk{L}{\vecv} \simeq \pastlk{\widehat{L}}{\vecv}$.
    Again, since no faces of $\sigma$ are removed from~$\cubes{K}$ and~$\cubes{K'}$
    to obtain $\cubes{L}$ and $\cubes{L'}$,
    the past link of~$\vecv$ remains the same outside of~$L$ in both~$K$
    and~$K'$.
    Thus,~$\pastlk{K}{\vecv} \simeq  \pastlk{K'}{\vecv}$.
    Since~$\vecv$ was
    arbitrarily chosen, we conclude that~$(\tau,\sigma)$ is an LPDC pair.
\end{proof}

\section{Preservation of Spaces of Dipaths}\label{sec:pathspaces}

In \cite{belton2020towards}, we proved several results on the relationships between past links and spaces of
dipaths. One result, \thmref{partial-contractability}, states that for a directed Euclidean cubical
complex with a minimum vertex, if all past links are contractible, then all spaces of
dipaths starting at that minimum vertex are also contractible. If we
start with a directed Euclidean cubical complex with a minimum vertex that has all contractible past links,
then all spaces of dipaths from the minimum vertex are contractible by this theorem. We explain how those relationships extend to the LPDC setting in this section.

Applying an LPDC preserves the homotopy type of past links by definition. Hence,
applying the theorem again, we see that any LPDC also has contractible dipath spaces
from the minimum vertex. Notice that the minimum vertex is not removed in an LPDC, since it is a vertex and
minimal in all cubes  containing it (including the maximal cube).
We give an example of this in \exref{contractible-ps}.

\begin{example}[$3\times 3$ filled grid]\label{ex:contractible-ps}
    Let $K$ be the $3\times 3$ filled grid. For all $\vecv \in \verts{K}$,
    $\pastlk{K}{\vecv}$ is contractible. By~\thmref{partial-contractability}, this
    implies that all spaces of dipaths starting at $\zero$ are contractible. Applying an LPDC such as the edge
$[(1,3),(2,3)]$ results in contractible past links in $K'$ and so all spaces of dipaths in $K'$
are also contractible.  See \figref{contract-collapse}. We can generalize this example to
    any $k^d$ filled grid where $k,d \in \mathbb{N}$.

\begin{figure}[th]
    \centering
    \subfloat[Initial Complex\label{subfig:contractibility-collapse1}]{%
        \includegraphics[height=1in]{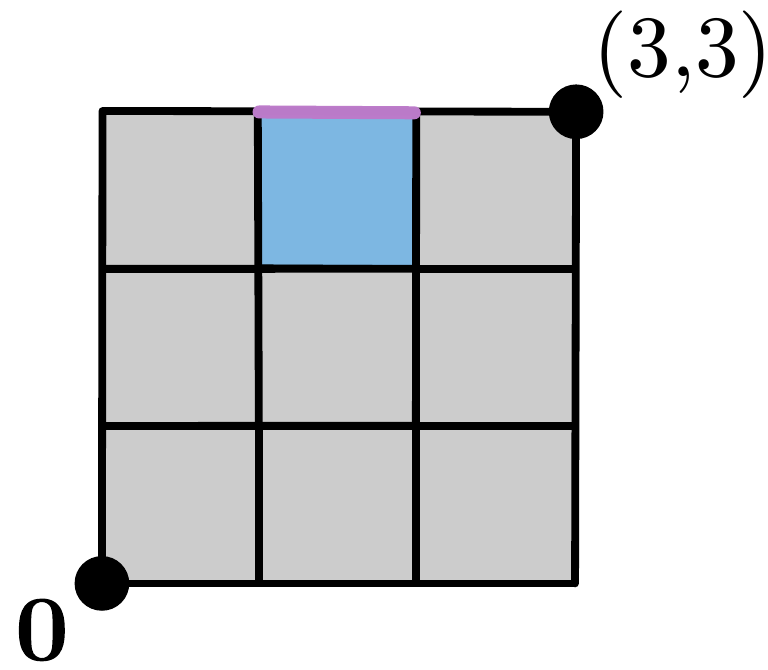}%
    }\hfil
    \subfloat[After Collapse\label{subfig:contractibility-collapse2}]{%
        \includegraphics[height=1in]{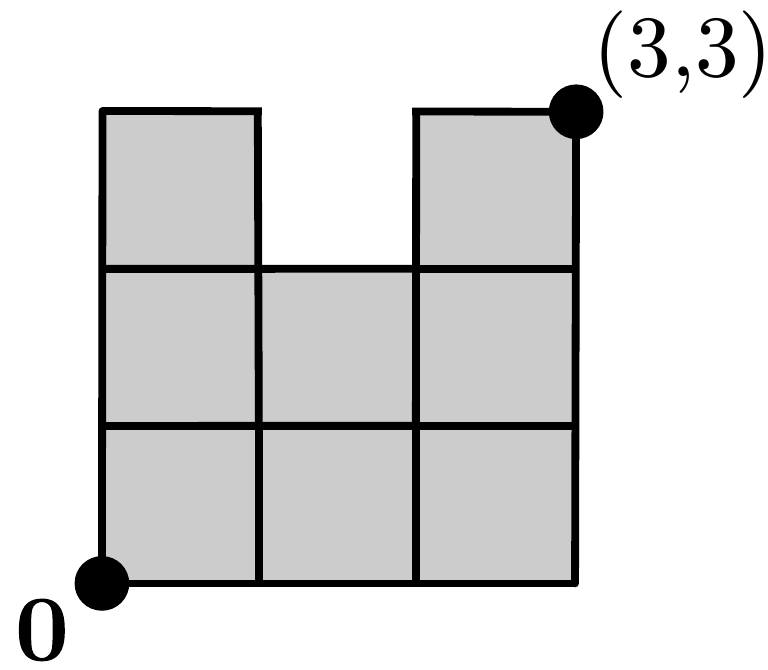}%
    }
    \caption{\protect\subref{subfig:contractibility-collapse1} The $3\times 3$ filled grid has contractible past links and
    dipath spaces. The pair comprising of the purple edge $[(1,3),(2,3)]$
    and the blue square~$[(1,2), (2,3)]$ is an LPDC pair. \protect\subref{subfig:contractibility-collapse2} The result of performing the LPDC.  All past links are contractible and so all dipath spaces are also contractible. }
\label{fig:contract-collapse}
\end{figure}

\end{example}

An analogous result holds for connectedness (\thmref{connected}). If we start with a directed
Euclidean cubical complex such that all past links are connected, then all dipath spaces are
connected. Any LPDC results in a directed Euclidean cubical complex that also has connected
dipath spaces. See~\exref{connected-ps}.

\begin{example}[Outer Cubes of the $5\times 5\times 5$ Grid]\label{ex:connected-ps}
    Let $K=[0,5]^3\setminus[1,4]^3$, which, as an undirected complex, is
    homeomorphic to a thickened two-sphere.
    For all $\vecv \in \verts{K}$, $\pastlk{K}{\vecv}$ is
    connected. By~\thmref{connected}, this implies that  for all $\vecv \in
    \verts{K}$, the space of
    dipaths~$\dipaths{K}{\zero}{\vecv}$ is connected. Applying an LPDC such as
    with the vertex~$(5,0,0)$ in the cube~$[(4,0,0), (5,1,1)]$ results in connected past links in $K'$ and so all spaces
    of dipaths $\dipaths{K'}{\zero}{\vecv}$ are connected. We can
    generalize this example to any $k^d$ grid where $d\geq 3$ and the inner
    cubes of dimension $d$ are~removed.
\end{example}

Both \thmref{partial-contractability} and \thmref{connected} have assumptions on the topology of past
links and results on the topology of spaces of dipaths from the minimum vertex. We may ask
if the converse statements are true. Does knowing the topology of spaces of dipaths from
the minimum vertex tell us anything about the topology of past links?
The converse to \thmref{partial-contractability} holds. To prove this, we first
need a
lemma whose proof appears in~\cite{ziemianski2016execution}.

\begin{lemma}[Homotopy Equivalence
    {\cite[Prop.~5.3]{ziemianski2016execution}}]\label{lem:homotopy-equivalence}
    Let $(K,\cubes{K})$ be a directed Euclidean cubical complex in $\R^n$.
    Let $\vecp,\vecq \in \Z^n$.
    If $\dipaths{K}{\vecp}{\vecq-\vecj}$ is contractible for all~$\vecj\in
    \pastlk{K}{\vecq}$, then $\dipaths{K}{\vecp}{\vecq} \simeq \pastlk{\upset{K}{\vecp}}{\vecq}$.
\end{lemma}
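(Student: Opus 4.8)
The plan is to prove that the space of dipaths $\dipaths{K}{\vecp}{\vecq}$ is homotopy equivalent to the past link $\pastlk{\upset{K}{\vecp}}{\vecq}$ by constructing an open cover of the dipath space indexed by the maximal simplices of the past link, and then showing this cover realizes a nerve whose geometric realization recovers the past link. Since the lemma is attributed to Ziemia\'nski \cite[Prop.~5.3]{ziemianski2016execution}, the strategy is to reconstruct the argument there rather than invent something new; the key conceptual input is that a dipath ending at $\vecq$ must pass through exactly one of the lower cofaces of $\vecq$ just before arriving, and these lower cofaces are precisely the simplices $\vecj \in \pastlk{K}{\vecq}$.

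First I would set up the combinatorial decomposition: for each maximal $\vecj \in \pastlk{\upset{K}{\vecp}}{\vecq}$, consider the subspace $U_{\vecj}$ of dipaths in $\dipaths{K}{\vecp}{\vecq}$ whose final approach to $\vecq$ lies inside the cube $[\vecq - \vecj, \vecq]$. These $U_{\vecj}$ form an open cover of the dipath space, since every dipath ending at $\vecq$ enters $\vecq$ through some lower coface. Next I would show that each $U_{\vecj}$ deformation retracts onto $\dipaths{K}{\vecp}{\vecq - \vecj'}$ for an appropriate face, and more importantly that each nonempty finite intersection $U_{\vecj_1} \cap \cdots \cap U_{\vecj_m}$ (indexed by a collection of simplices sharing a common face in the past link) is contractible. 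This contractibility is where the hypothesis enters: the intersection corresponds to dipaths that approach $\vecq$ through the common sub-cube, and such a space deformation retracts onto $\dipaths{K}{\vecp}{\vecq - \vecj}$ for the intersection simplex $\vecj$, which is contractible by assumption.

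With the cover established as a good cover, I would invoke the Nerve Lemma to conclude that $\dipaths{K}{\vecp}{\vecq}$ is homotopy equivalent to the nerve of the cover $\{U_{\vecj}\}$. The final step is to identify this nerve with the past link $\pastlk{\upset{K}{\vecp}}{\vecq}$ itself: a collection of lower cofaces of $\vecq$ has a common face exactly when the corresponding simplices $\vecj$ span a simplex in the past link, so the nerve of the cover is combinatorially isomorphic to $\pastlk{\upset{K}{\vecp}}{\vecq}$. Note that the restriction to $\upset{K}{\vecp}$ (rather than all of $K$) is essential because only cubes reachable from $\vecp$ can contribute dipaths starting at $\vecp$; cubes below $\vecp$ are invisible to the dipath space.

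The hard part will be verifying the contractibility of the intersections $U_{\vecj_1} \cap \cdots \cap U_{\vecj_m}$ rigorously, since this requires carefully controlling the deformation retraction near the endpoint $\vecq$ while keeping the entire path monotone and inside $K$. One must ensure that shrinking the final segment of a dipath into the common sub-cube does not break directedness or leave the complex, and that this retraction is continuous in the compact-open topology on $\dipaths{K}{\vecp}{\vecq}$. Because the full technical details are already carried out in \cite{ziemianski2016execution}, I would cite that reference for the construction of the retractions and the continuity estimates, and focus the exposition on the identification of the nerve with the past link, which is the part most directly relevant to the present paper.
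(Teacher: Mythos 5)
You are right that this lemma is not proved in the paper at all---it is imported verbatim from Ziemia\'nski \cite{ziemianski2016execution}---so deferring the analytic details to that reference is consistent with what the paper does. The problem is that the one step you choose to carry out yourself, the identification of the nerve of your cover with the past link, is exactly the step that fails. With $U_{\vecj}$ defined as the dipaths whose final approach to $\vecq$ lies in $[\vecq-\vecj,\vecq]$, every dipath that is eventually constant at $\vecq$ belongs to \emph{every} $U_{\vecj}$, since its tail lies in $\{\vecq\}\subseteq[\vecq-\vecj,\vecq]$; and any dipath can be reparametrized to be eventually constant. Hence all finite intersections are nonempty, the nerve of your cover is the full simplex on the maximal simplices of $\pastlk{\upset{K}{\vecp}}{\vecq}$, and the degenerate intersections (those with $\vecj_1\wedge\cdots\wedge\vecj_m=\zero$) are, up to homotopy, the whole space $\dipaths{K}{\vecp}{\vecq}$ --- precisely the space whose homotopy type is unknown, and one the hypothesis says nothing about, because $\zero\notin\pastlk{K}{\vecq}$. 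As written, your argument would show that $\dipaths{K}{\vecp}{\vecq}$ is always contractible, contradicting the statement whenever the past link is not. Concretely, let $K=\partial([0,1]^2)$, $\vecp=(0,0)$, $\vecq=(1,1)$: both $\dipaths{K}{\vecp}{\vecq-\vecj}$ are points, $\pastlk{K}{\vecq}$ is two isolated vertices, and $\dipaths{K}{\vecp}{\vecq}$ has two components, yet $U_{(1,0)}\cap U_{(0,1)}$ is nonempty (paths with constant tail at $\vecq$), so your nerve would be an edge. Reading ``final approach'' more restrictively does not repair this (the sets then fail to cover), and in either reading the sets are not open in the compact-open topology, so the open-cover Nerve Lemma does not apply directly anyway.

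The way to make an argument along these lines correct---and the standard technique in this literature---is to index the decomposition by \emph{all} simplices of the past link, not only the maximal ones, and to glue the pieces $\dipaths{K}{\vecp}{\vecq-\vecj}$ as a homotopy colimit over the face poset of $\pastlk{\upset{K}{\vecp}}{\vecq}$, with structure maps given by appending, inside the cube $[\vecq-\vecj,\vecq]$, a dipath from $\vecq-\vecj$ to $\vecq-\vecj'$ for each face $\vecj'$ of $\vecj$. Because the empty simplex is excluded from the indexing poset, the degenerate pieces above never occur, and when every $\dipaths{K}{\vecp}{\vecq-\vecj}$ is contractible the homotopy colimit is equivalent to the nerve of the poset, i.e.\ to the barycentric subdivision of $\pastlk{\upset{K}{\vecp}}{\vecq}$, which gives the lemma. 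Note also that your intermediate claim that $U_{\vecj}$ deformation retracts onto paths passing through $\vecq-\vecj$ needs care: a dipath may enter $[\vecq-\vecj,\vecq]$ through a face not containing $\vecq-\vecj$, and it cannot be pushed backwards monotonically; this is part of why the poset/homotopy-colimit formulation (or a closed-cover version of the nerve theorem with the correct indexing) is used in \cite{ziemianski2016execution}. If you intend to present more than a citation, reproduce that decomposition rather than the maximal-cube cover.
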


Thus, we obtain:

\begin{theorem}[Contractability]\label{thm:full-contractability}
    Let $(K,\cubes{K})$ be a directed Euclidean cubical complex in $\R^n$ that
    has a minimum vertex $\vecw$.
    The following two statements are equivalent:
    \begin{enumerate}
        \item For all~$\vecv \in \verts{K}$,
            the space of dipaths $\dipaths{K}{\vecw}{\vecv}$ is
            contractible.\label{stmt:full-contractability-dipaths}
        \item For all~$\vecv \in \verts{K}$,
            the past link $\pastlk{K}{\vecv}$ is contractible.\label{stmt:full-contractability-pastlinks}
    \end{enumerate}
\end{theorem}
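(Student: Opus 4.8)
The plan is to prove the two implications separately and to observe that one of them is already available. The implication \stmtref{full-contractability-pastlinks}$\Rightarrow$\stmtref{full-contractability-dipaths} is exactly the content of \thmref{partial-contractability}: if every past link $\pastlk{K}{\vecv}$ is contractible, then every dipath space $\dipaths{K}{\vecw}{\vecv}$ emanating from the minimum vertex is contractible. So no new argument is needed in this direction, and the real work lies in the converse.

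For \stmtref{full-contractability-dipaths}$\Rightarrow$\stmtref{full-contractability-pastlinks}, I would fix an arbitrary $\vecv \in \verts{K}$ and show $\pastlk{K}{\vecv}$ is contractible by applying \lemref{homotopy-equivalence} with $\vecp = \vecw$ and $\vecq = \vecv$. The first step is to verify the hypothesis of that lemma. For each $\vecj \in \pastlk{K}{\vecv}$, the definition of the past link (\defref{pl}) gives $[\vecv - \vecj, \vecv] \subseteq K$, so in particular $\vecv - \vecj \in \verts{K}$. Hence, by the assumed \stmtref{full-contractability-dipaths}, the space $\dipaths{K}{\vecw}{\vecv - \vecj}$ is contractible. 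With the hypothesis confirmed for every such $\vecj$, the lemma yields the homotopy equivalence $\dipaths{K}{\vecw}{\vecv} \simeq \pastlk{\upset{K}{\vecw}}{\vecv}$.

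The last step is to identify the right-hand side with $\pastlk{K}{\vecv}$. Because $\vecw$ is the minimum vertex of $K$, every point of $K$ lies above $\vecw$ in the product order, so $\upset{K}{\vecw} = K$, and therefore $\pastlk{\upset{K}{\vecw}}{\vecv} = \pastlk{K}{\vecv}$. Combining this identification with the equivalence above and with the contractibility of $\dipaths{K}{\vecw}{\vecv}$ (again from \stmtref{full-contractability-dipaths}), I conclude that $\pastlk{K}{\vecv}$ is contractible; since $\vecv$ was arbitrary, the converse follows. I expect the main obstacle to be mostly conceptual rather than technical: the crux is recognizing that the conclusion of \lemref{homotopy-equivalence} can be routed through the minimum vertex by the identity $\upset{K}{\vecw} = K$, so that Ziemia\'nski's local-to-global equivalence converts the assumed global contractibility of dipath spaces directly into local contractibility of past links. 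The only case meriting a separate sanity check is the minimum vertex itself (where the past link is empty and $\dipaths{K}{\vecw}{\vecw}$ is a single constant path), which is handled by the governing convention on the trivial past link rather than by the lemma.
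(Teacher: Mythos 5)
Your proposal is correct and follows essentially the same route as the paper: the forward implication is quoted from \thmref{partial-contractability}, and the converse applies \lemref{homotopy-equivalence} with $\vecp=\vecw$, $\vecq=\vecv$ after checking its hypothesis from the assumed contractibility of the spaces $\dipaths{K}{\vecw}{\vecv-\vecj}$, then identifies $\pastlk{\upset{K}{\vecw}}{\vecv}$ with $\pastlk{K}{\vecv}$ (you make the reason, $\upset{K}{\vecw}=K$, explicit where the paper states the equality directly). No gaps; your aside on the minimum vertex is a reasonable but inessential sanity check.
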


\begin{proof}
   By \thmref{partial-contractability},
    we obtain \stmtref{full-contractability-pastlinks} implies
    \stmtref{full-contractability-dipaths}.

    Next, we show that
    \stmtref{full-contractability-dipaths} implies
    \stmtref{full-contractability-pastlinks}.
    Let $\vecv \in \verts{K}$.
    For all $\vecj \in \pastlk{K}{\vecv}$, the cube $[\vecv-\vecj, \vecv]$ is a
    subset of~$K$,
    which means that \mbox{$\vecv-\vecj \in \verts{K}$.}
    Thus, by assumption, all dipath spaces $\dipaths{K}{\vecw}{\vecv-\vecj}$ are contractible.
    By \lemref{homotopy-equivalence}, we know that
    \mbox{$\dipaths{K}{\vecw}{\vecv} \simeq \pastlk{\upset{K}{\vecw}}{\vecv}=\pastlk{K}{\vecv}$.}
    Again, since~$\vecv \in \verts{K}$, the dipath space~$\dipaths{K}{\vecw}{\vecv}$ is contractible.
    Therefore,~$\pastlk{K}{\vecv}$ is contractible.
\end{proof}

As a consequence of this theorem, we know that
if we start with a directed Euclidean cubical complex with
contractible dipath spaces starting at the minimum vertex, then any LPDC
also result in a directed Euclidean cubical complex with all contractible dipath spaces
starting at the minimum vertex, and vice versa.

\begin{corollary}[Preserving Directed Path Space Contractability]
    Let $(K,\cubes{K})$ be a directed Euclidean cubical complex in $\R^n$ that
    has a minimum vertex~$\vecw$.
    Let $\tau, \sigma \in \cubes{K}$ such that $\tau$ is a face of $\sigma$.
    If $\tau$ is a free face of $\sigma$, let
    $(K',\cubes{K}')$ be the~$(\tau,\sigma)$-collapse.
    If $K'$ is an LPDC of $K$, then
    the spaces of dipaths~$\dipaths{K}{\vecw}{\vecv}$
    are contractible for all $\vecv \in \verts{K}$ if and only
    if the spaces of dipaths~$\dipaths{K'}{\vecw}{\veck}$ are
    contractible for all $\veck \in \verts{K'}$.
\label{cor:contractible-collapse}
\end{corollary}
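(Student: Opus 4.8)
The plan is to prove Corollary~\ref{cor:contractible-collapse} by leveraging the characterization from Theorem~\ref{thm:full-contractability} together with the defining property of an LPDC. The crucial observation is that the corollary is almost immediate once we recognize that an LPDC, by \defref{directedCollapse}, preserves the homotopy type of \emph{every} past link at \emph{every} vertex of $K'$, and that Theorem~\ref{thm:full-contractability} gives an \emph{if and only if} between contractibility of all dipath spaces from the minimum vertex and contractibility of all past links. The main subtlety is bookkeeping over vertex sets: $\verts{K}$ and $\verts{K'}$ may differ (when $\tau$ is itself a vertex, one vertex can be removed by the collapse), so we must take care to match up the correct quantifiers.

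First I would fix the setup as in the statement: $K$ has minimum vertex $\vecw$, $\tau$ is a free face of $\sigma$, and $K'$ is the $(\tau,\sigma)$-collapse that is an LPDC of $K$. I would first note that $\vecw$ remains the minimum vertex of $K'$: since $\vecw$ is a vertex and is minimal in every cube containing it, an LPDC never removes it (this is remarked just after Theorem~\ref{thm:partial-contractability}'s application to LPDCs, in the text preceding \exref{contractible-ps}), so both $\dipaths{K}{\vecw}{-}$ and $\dipaths{K'}{\vecw}{-}$ make sense and $K'$ satisfies the hypotheses of Theorem~\ref{thm:full-contractability}. Applying Theorem~\ref{thm:full-contractability} to $K$, statement \stmtref{full-contractability-dipaths} (all $\dipaths{K}{\vecw}{\vecv}$ contractible for $\vecv\in\verts{K}$) is equivalent to statement \stmtref{full-contractability-pastlinks} (all $\pastlk{K}{\vecv}$ contractible for $\vecv\in\verts{K}$); applying it to $K'$ gives the analogous equivalence for $K'$ over $\verts{K'}$. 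So the corollary reduces to showing that all past links of $K$ are contractible (over $\verts{K}$) if and only if all past links of $K'$ are contractible (over $\verts{K'}$).

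The heart of the argument is this equivalence of past-link contractibility, which I would establish using the LPDC property. For the forward direction, suppose every $\pastlk{K}{\vecv}$ is contractible. Since $K'$ is an LPDC, for every $\vecv\in\verts{K'}$ we have $\pastlk{K}{\vecv}\simeq\pastlk{K'}{\vecv}$, and because $\verts{K'}\subseteq\verts{K}$, the space $\pastlk{K}{\vecv}$ is contractible, hence so is $\pastlk{K'}{\vecv}$. For the converse, suppose every $\pastlk{K'}{\vecv}$ is contractible for $\vecv\in\verts{K'}$; again the LPDC equality $\pastlk{K}{\vecv}\simeq\pastlk{K'}{\vecv}$ gives contractibility of $\pastlk{K}{\vecv}$ for each $\vecv\in\verts{K'}$. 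The one genuine gap to close is the possibly-removed vertex: if $\verts{K'}\subsetneq\verts{K}$, then $\tau$ was a zero-cube removed by the collapse, so there is a unique $\vecv^\ast\in\verts{K}\setminus\verts{K'}$, namely $\vecv^\ast=\tau$, and I must separately verify that $\pastlk{K}{\vecv^\ast}$ is contractible. Here I expect the main obstacle to lie, and the resolution is that when $\tau$ is a free vertex being collapsed into $\sigma$, its past link in $K$ is a cone (it has $\min(\sigma)$ as an apex, since $[\min(\sigma),\tau]$ is the unique maximal cube below $\tau$ whose max is $\tau$), hence contractible; alternatively, since $\tau\neq\min(\sigma)$ is forced by the LPDC condition via Theorem~\ref{thm:collapsingpairs}, one can argue directly that $\pastlk{K}{\tau}$ is a full simplex or a cone and therefore contractible.

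Finally I would assemble the pieces: the chain of equivalences
\[
\forall\,\vecv\in\verts{K}\colon \dipaths{K}{\vecw}{\vecv}\text{ contractible}
\iff
\forall\,\vecv\in\verts{K}\colon \pastlk{K}{\vecv}\text{ contractible}
\iff
\forall\,\vecv\in\verts{K'}\colon \pastlk{K'}{\vecv}\text{ contractible}
\iff
\forall\,\veck\in\verts{K'}\colon \dipaths{K'}{\vecw}{\veck}\text{ contractible}
\]
where the outer two biconditionals are Theorem~\ref{thm:full-contractability} applied to $K$ and $K'$ respectively, and the middle biconditional is the past-link equivalence just established. This yields exactly the claimed statement. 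I anticipate the whole proof is short; the only place requiring real care — rather than invoking the LPDC definition verbatim — is confirming contractibility of the past link at the one vertex that may be dropped from $\verts{K}$ to $\verts{K'}$, and ensuring the minimum vertex $\vecw$ is genuinely preserved so that Theorem~\ref{thm:full-contractability} applies to $K'$.
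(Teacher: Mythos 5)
Your proposal is correct and follows essentially the same route as the paper: apply Theorem~\ref{thm:full-contractability} to both $K$ and $K'$, transfer contractibility of past links via the LPDC property, and separately handle the one vertex possibly removed (when $\tau$ is a zero-cube) by noting that, $\tau$ being a free face with $\tau\neq\min(\sigma)$, its past link in $K$ equals its past link in $\overline{\sigma}$, a full simplex and hence contractible. The only differences are organizational (your chain of equivalences versus the paper's two separate directions), so no gap to report.
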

\begin{proof}
We start with the forwards direction by assuming that the spaces of dipaths
$\dipaths{K}{\vecw}{\vecv}$ are contractible for all $\vecv \in \verts{K}$.
\thmref{full-contractability} tells us that all past links $\pastlk{K}{\vecv}$ are contractible for
all $\vecv \in \verts{K}$. This implies that $\pastlk{K'}{\veck}$ is contractible for all
$\veck \in \verts{K'}$ because $K'$ is an LPDC of~$K$. Applying \thmref{full-contractability}
again, we see that all spaces of dipaths~$\dipaths{K'}{\vecw}{\veck}$ are contractible for all $\veck \in \verts{K'}$.

Next we prove the backwards direction by assuming that the spaces of dipaths
$\dipaths{K'}{\vecw}{\veck}$ are contractible for all $\veck \in \verts{K'}$. Let
$\vecv \in \verts{K}$. Either $\vecv \in \verts{K'}$ or $\vecv \notin \verts{K'}$.

Case 1 ($\vecv \in \verts{K'}$): By \thmref{full-contractability}, we know that
$\pastlk{K'}{\vecv}$ is contractible. Since $K'$ is an LPDC of $K$, then $\pastlk{K}{\vecv}$ is also contractible.

Case 2 ($\vecv \notin \verts{K'}$): If $\vecv \notin \verts{K}$, then $\tau$ is a vertex and
$\vecv = \tau$. Observe that $\pastlk{\overline{\sigma}}{\tau}$ is contractible since
$\overline{\sigma}$ is an elementary cube and~$\tau$ does not contain $\min(\sigma)$.
Furthermore, notice that $\pastlk{K}{\tau}=\pastlk{\sigma}{\tau}$ because $\tau$ is a
free face of $\sigma$. Hence, $\pastlk{K}{\tau}$ is contractible.

Therefore $\pastlk{K}{\vecv}$ is contractible for all $\vecv \in \verts{K}$.
Applying \thmref{full-contractability}, we get that $\dipaths{K}{\vecw}{\vecv}$ is contractible
for all $\vecv \in \verts{K}$.
\end{proof}

Using~\thmref{connected} and the partial converse to the connectedness
theorem~\cite[Theorem 3]{belton2020towards}, we get that any LPDC of a directed
Euclidean cubical complex with connected dipath spaces and reachable vertices
results in a directed Euclidean cubical complex with connected dipath spaces.

\begin{corollary}[Condition for LPDCs to Preserve Connectedness of All Directed Path Spaces]
\label{cor:connected-collapse}
    Let~$(K,\cubes{K})$ be a directed Euclidean cubical complex in $\R^n$ that
    has a minimum vertex $\vecw$.
    Let $(L,\cubes{L})=\reachcplx{K}{\vecw}$.
    Let $(\tau, \sigma)$ be an
    LPDC pair in~$L$, and let $L'$ be the $(\tau,\sigma)$-collapse.
    The spaces of dipaths in
    $\dipaths{L}{\vecw}{\veck}$ are connected for all $\vecv \in \verts{L}$ if and only if
    the spaces of dipaths~$\dipaths{L'}{\vecw}{\vecv}$ are connected for all
    $\vecv \in \verts{L'}$.
\end{corollary}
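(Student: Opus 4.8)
The plan is to move the entire question from dipath spaces to past links, using the two results of \ssecref{prior} in tandem. For any directed Euclidean cubical complex $M$ with minimum vertex $\vecw$ satisfying $M=\reachcplx{M}{\vecw}$, I claim the following equivalence: all past links $\pastlk{M}{\vecv}$ (for $\vecv\in\verts{M}$) are connected if and only if all spaces $\dipaths{M}{\vecw}{\vecv}$ (for $\vecv\in\verts{M}$) are connected. The forward implication is exactly \thmref{connected}, and the reverse is the contrapositive of \thmref{obstructions}, applied with $\reachcplx{M}{\vecw}=M$. I would apply this equivalence to $M=L$, which is reachable by construction, and to $M=L'$.

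Next I would handle the bookkeeping of past links under the collapse. Since $(\tau,\sigma)$ is an LPDC pair, $\pastlk{L}{\vecv}\simeq\pastlk{L'}{\vecv}$ for every $\vecv\in\verts{L'}$; as connectedness is a homotopy invariant, $\pastlk{L}{\vecv}$ is connected exactly when $\pastlk{L'}{\vecv}$ is. The only vertex that can lie in $\verts{L}\setminus\verts{L'}$ is $\tau$ itself, and only when $\tau$ is a $0$-cube, since a singleton $\{\vecv\}$ satisfies $\overline{\tau}\subseteq\{\vecv\}$ only if $\tau=\{\vecv\}$. For that vertex, $\tau$ being a free face of $\sigma$ gives $\pastlk{L}{\tau}=\pastlk{\overline{\sigma}}{\tau}$, which is contractible, hence connected, because $\overline{\sigma}$ is a unit cube and, by \thmref{collapsingpairs}, $\min(\sigma)\notin\verts{\tau}$; this is the computation already used in \corref{contractible-collapse}. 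I would also note that $\vecw$ is never removed by a collapse, so $\vecw$ remains the minimum vertex of $L'$. Together these observations show that all past links of $L$ are connected if and only if all past links of $L'$ are connected.

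Chaining the three equivalences then yields the corollary: connectedness of all dipath spaces in $L$ is equivalent to connectedness of all past links of $L$ (the equivalence applied to the reachable complex $L$), which is equivalent to connectedness of all past links of $L'$ (previous paragraph), which is equivalent to connectedness of all dipath spaces in $L'$ (the equivalence applied to $L'$). The forward direction of the corollary uses only \thmref{connected} on $L'$, which needs nothing more than a minimum vertex, so it is immediate. The reverse direction, however, invokes \thmref{obstructions} on $L'$, and this requires $L'=\reachcplx{L'}{\vecw}$. I expect this to be the main obstacle: I must show that the collapse preserves reachability, i.e.\ every $\vecv\in\verts{L'}$ is still reachable from $\vecw$ within $L'$. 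I would establish this by a rerouting argument, namely that the removed cells all lie in $\overline{\sigma}$ and all contain the free face $\tau$, so a dipath meeting the removed region can be pushed onto the faces of $\overline{\sigma}$ that avoid $\tau$ and hence survive the collapse, while the hypothesis $\min(\sigma)\notin\verts{\tau}$ ensures that the $\min(\sigma)$-corner and enough lower faces remain to carry a monotone path. Alternatively, if one reads ``connected'' as ``having exactly one path component,'' then reachability of $L'$ is already forced by the hypothesis of the reverse direction, and this obstacle dissolves.
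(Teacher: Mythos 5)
Your overall strategy is the one the paper intends: shuttle the statement through past links using \thmref{connected} in one direction and the contrapositive of \thmref{obstructions} in the other, use the LPDC hypothesis to identify $\pastlk{L}{\vecv}$ and $\pastlk{L'}{\vecv}$ up to homotopy for $\vecv\in\verts{L'}$, and handle the one possibly removed vertex (namely $\tau$, when $\tau$ is a zero-cube) by the contractibility of $\pastlk{\overline{\sigma}}{\tau}$, exactly as in \corref{contractible-collapse}. The forward direction of your chain (connected dipath spaces in $L$ $\Rightarrow$ connected past links of $L$ via reachability of $L$ $\Rightarrow$ connected past links of $L'$ $\Rightarrow$ connected dipath spaces in $L'$ by \thmref{connected}) is complete and matches the paper, which justifies the corollary only by this appeal to \thmref{connected} and \thmref{obstructions}.

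The genuine gap is in the reverse direction, which you correctly flag but do not close: applying \thmref{obstructions} to $L'$ only controls past links of vertices of $\reachcplx{L'}{\vecw}$, so you need $L'=\reachcplx{L'}{\vecw}$. Neither of your two proposed resolutions works as stated. The fallback of reading ``connected'' as ``nonempty with one component'' is inconsistent with the paper's own usage: in \exref{bowlingball} the paper asserts that all dipath spaces from $\zero$ are connected even though some vertices (e.g.\ $(4,1,1)$) are unreachable, so empty dipath spaces count as connected and the hypothesis on $L'$ does not force reachability. Your rerouting sketch is also not sound in the form given: it is false that a dipath meeting the removed open star of $\tau$ can be pushed onto the surviving faces of $\overline{\sigma}$ between the same endpoints. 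For instance, take $\sigma$ a unit square and $\tau$ its edge $[(1,0),(1,1)]$ (an LPDC pair since $\min(\sigma)=(0,0)\notin\verts{\tau}$); both $(1,0)$ and $(1,1)$ survive the collapse, but every dipath from $(1,0)$ to $(1,1)$ lies on the removed closed edge, so no local push-off exists. Preservation of reachability is in fact true, but it requires a more global argument, e.g.\ working with monotone edge paths: when $\dim\tau\geq 2$ no vertices or edges are removed; when $\dim\tau\leq 1$ one uses $\min(\sigma)\notin\verts{\tau}$ to produce a surviving lower edge of $\max(\tau)$ (resp.\ of the vertex following $\tau$ on the path) inside $\overline{\sigma}$, shows the path to that lower endpoint cannot have used the removed cells, and splices. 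None of this is in your sketch (nor, to be fair, in the paper, which states the corollary without proof), so as written the ``only if'' half of the corollary is not established.
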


We note that reachability is a necessary condition. Below we give an example of
a directed Euclidean cubical complex $K$ that has all connected dipath spaces but
an LPDC yields a directed Euclidean cubical complex with a disconnected path space.

\begin{figure}[th]
    \centering
    \subfloat[Original Complex\label{subfig:bowlingball-K}]%
        {\includegraphics[width=.3\textwidth]{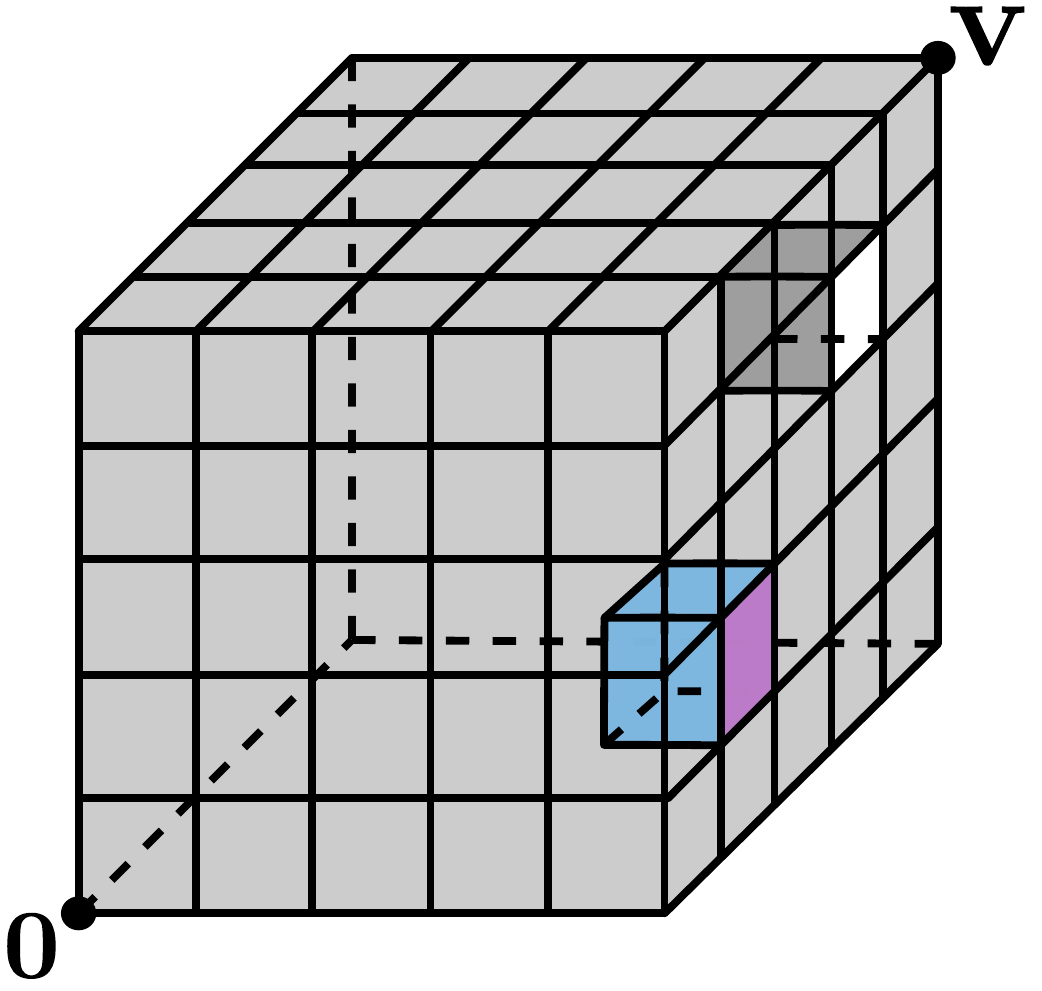}}
    \hfil
    \subfloat[After Collapse\label{subfig:bowlingball-Kprime}]%
        {\includegraphics[width=.3\textwidth]{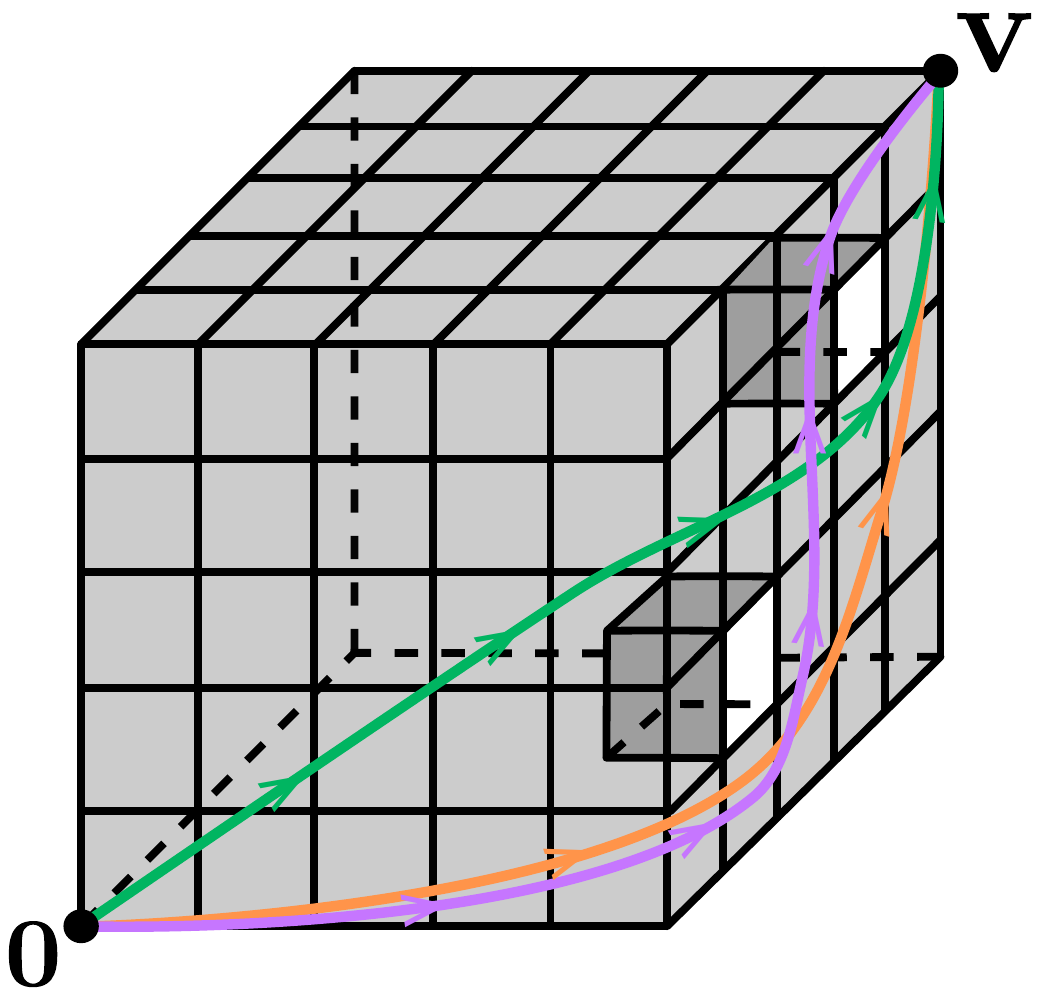}}

    \caption{
        The bowling ball before and after the collapse described in
        \exref{bowlingball}. Observe $\protect\dipaths{K}{\zero}{(5,5,5)}$
        has one connected component. Additionally,~$\sigma = [(4,1,1),(5,2,2)]$ (highlighted in blue)
        and \mbox{$\tau = [(5,1,1),(5,2,2)]$} (highlighted in purple) is an LPDC pair.
        After collapsing
        $(\tau, \sigma)$, $\protect\dipaths{K}{\zero}{(5,5,5)}$ changes from having one connected
        component to three connected components.
        The three connected components are represented by the three dipaths.}
        \label{fig:bowlingball}
\end{figure}
\begin{example}[Bowling Ball]\label{ex:bowlingball}
    Let $K$ be the complex resulting from taking boundary of the $5\times 5\times 5$ grid
    union the closed cubes~$[(4,1,1),(5,2,2)]$ and~$[(4,3,3),(5,4,4)]$, then removing the open cubes ~$[(4,3,3),(5,4,4)]$ and $[(5,3,3),(5,4,4)]$.
    See \subfigref{bowlingball}{K}.
    Notice that some vertices of $K$ are unreachable, for example, vertex $(4,1,1)$. Furthermore, all past links of vertices in $K$ are
    connected and so all dipath spaces starting at~$\zero$ are also connected. After performing an
    LPDC with $\tau = [(5,1,1),(5,2,2)]$ and~$\sigma = [(4,1,1),(5,2,2)]$, the dipath space
    between $\zero$ and $(5,5,5)$ changes from having one connected component to three connected
    components, as shown in the figure. This example shows that the
    reachability condition in~\corref{connected-collapse} is necessary for preserving connnectedness
    in LPDCs.
\end{example}

LPDCs can also preserve dihomotopy classes of dipaths starting at the minimum vertex of many directed
Euclidean cubical complexes that have disconnected past links. Recall the
Swiss flag as discussed in \figref{swissflag-collapse}. The Swiss flag has disconnected past links at $(3,4)$
and $(4,3)$, yet there exists a sequence of LPDCs that results in a directed Euclidean cubical
complex that highlights the two dihomotopy classes of dipaths between $\zero$
and $(5,5)$. \exref{window} gives another similar situation.

\begin{example}[Window]\label{ex:window}

\begin{figure}[th]
    \centering
    \subfloat[\label{subfig:window1}]{%
        \includegraphics[height=1in]{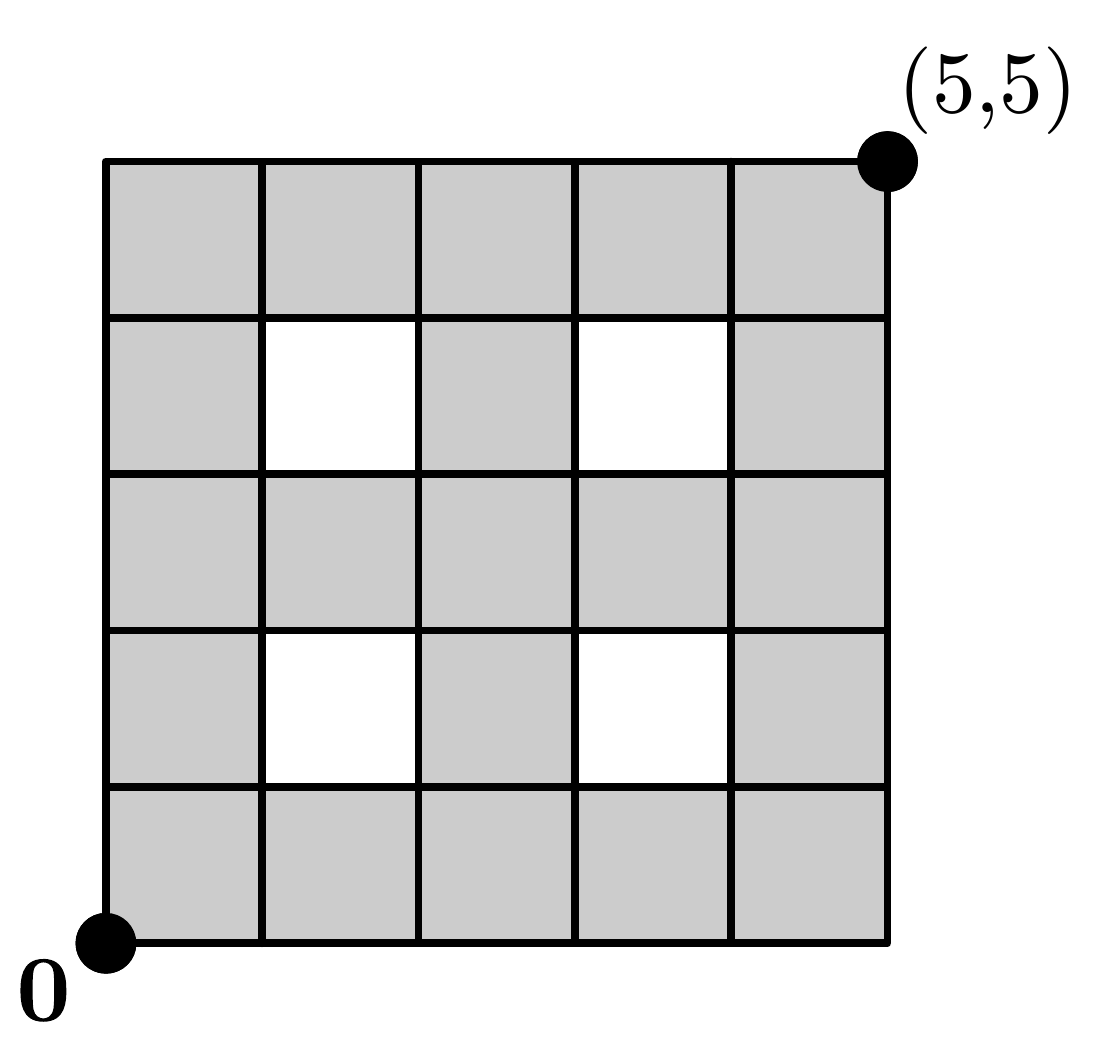}}%
    \subfloat[\label{subfig:window-collapse2}]{%
        \includegraphics[height=1in]{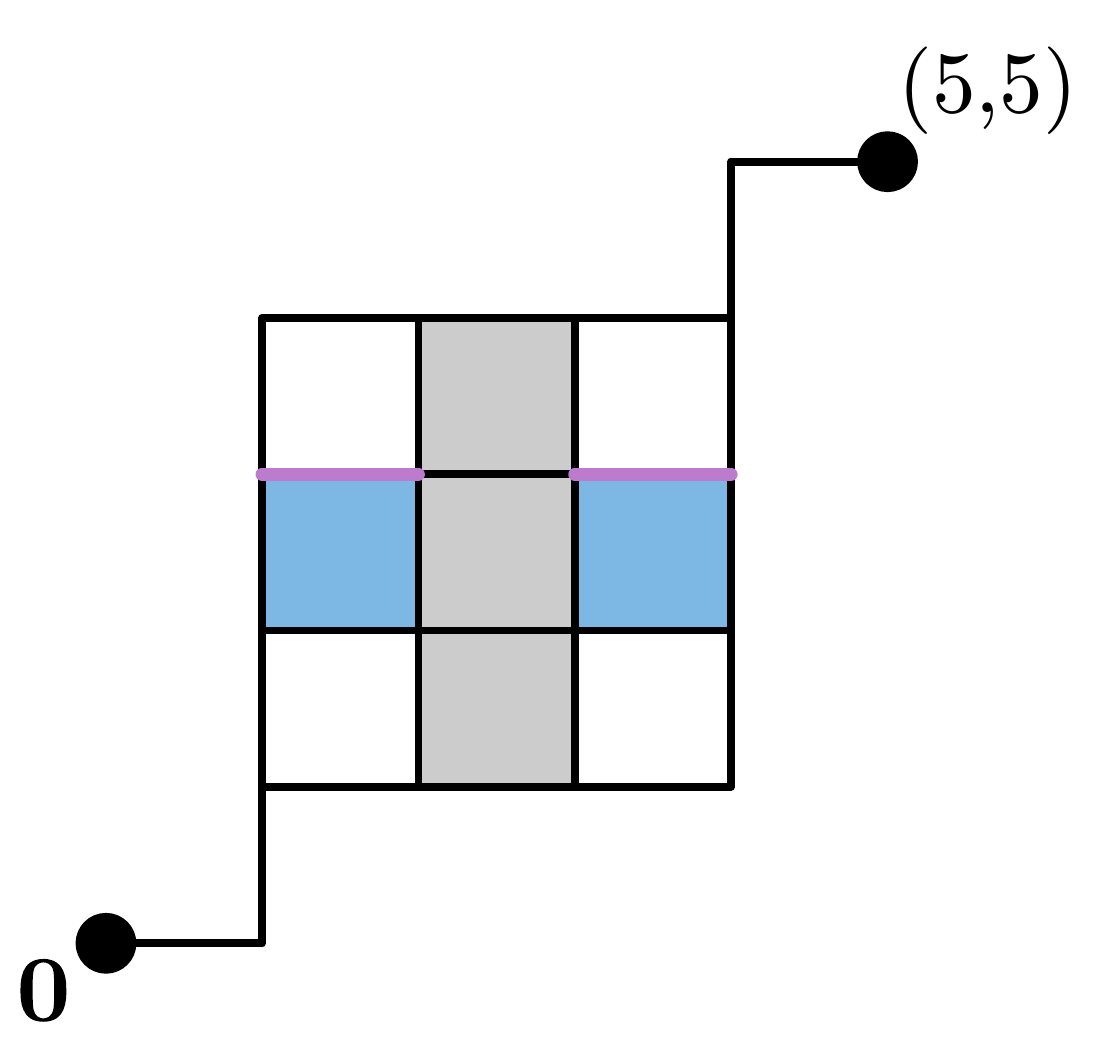}}%
    \subfloat[\label{subfig:window-collapse3}]{%
        \includegraphics[height=1in]{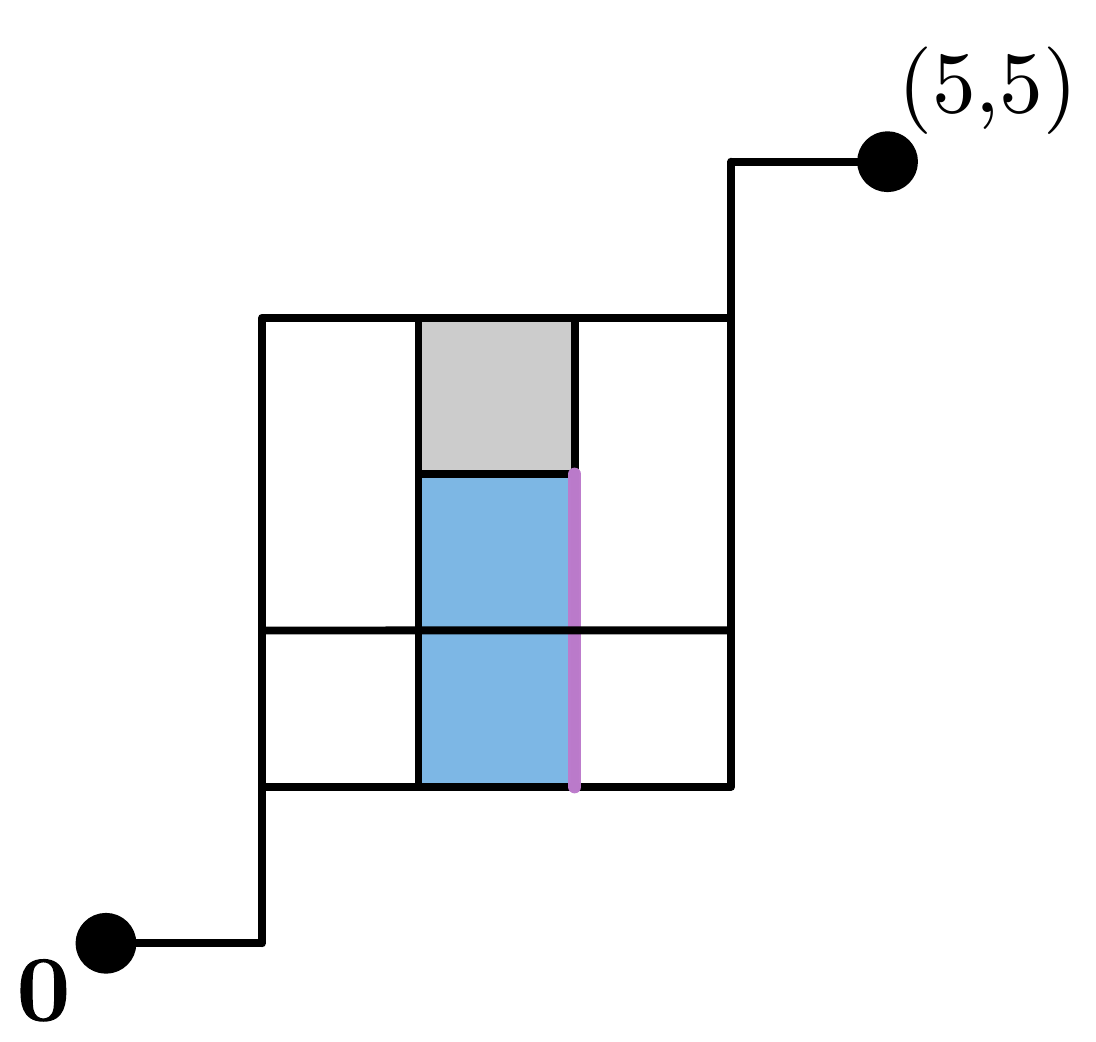}}\\
    \subfloat[\label{subfig:window-collapse4}]{%
        \includegraphics[height=1in]{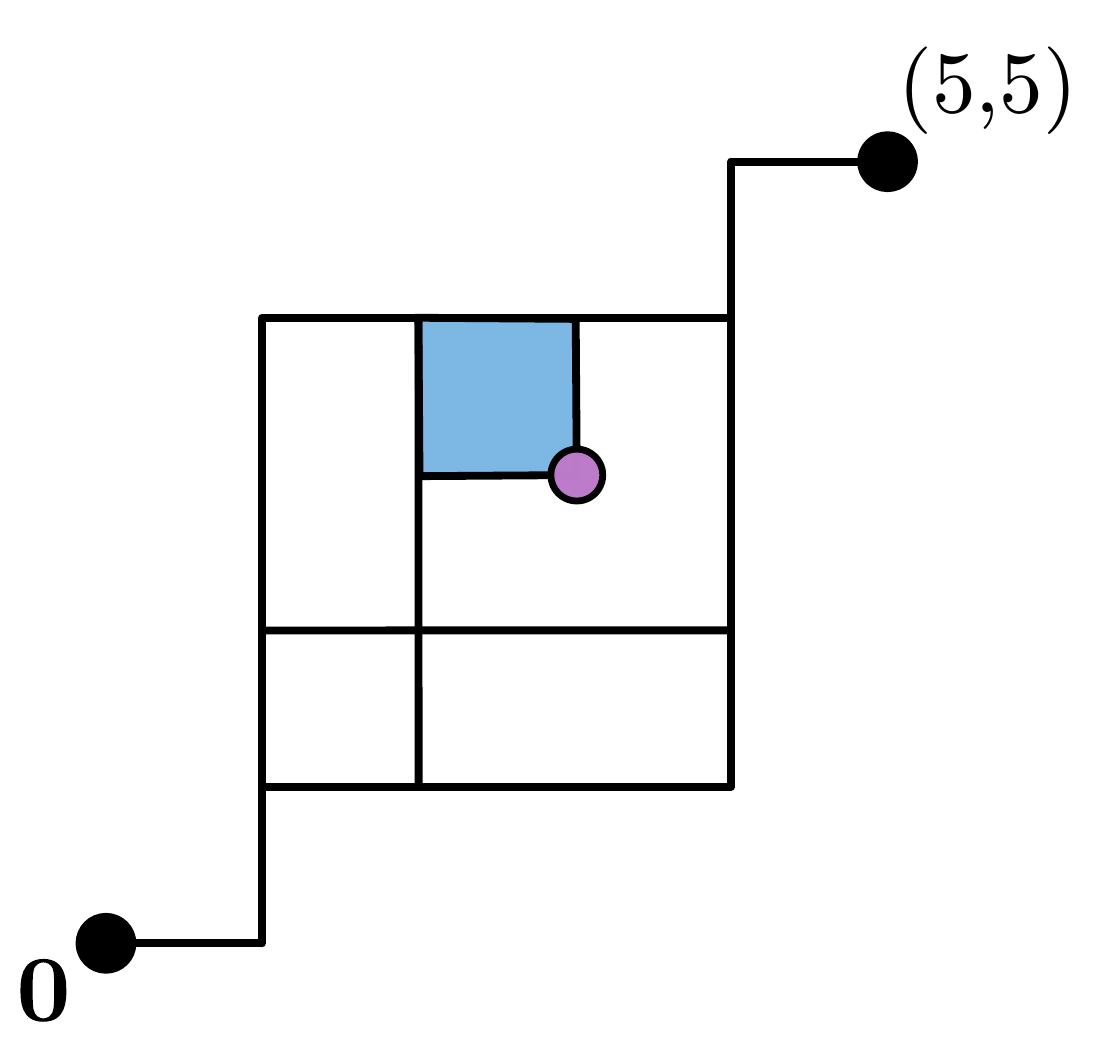}}%
    \subfloat[\label{subfig:window-collapse5}]{%
        \includegraphics[height=1in]{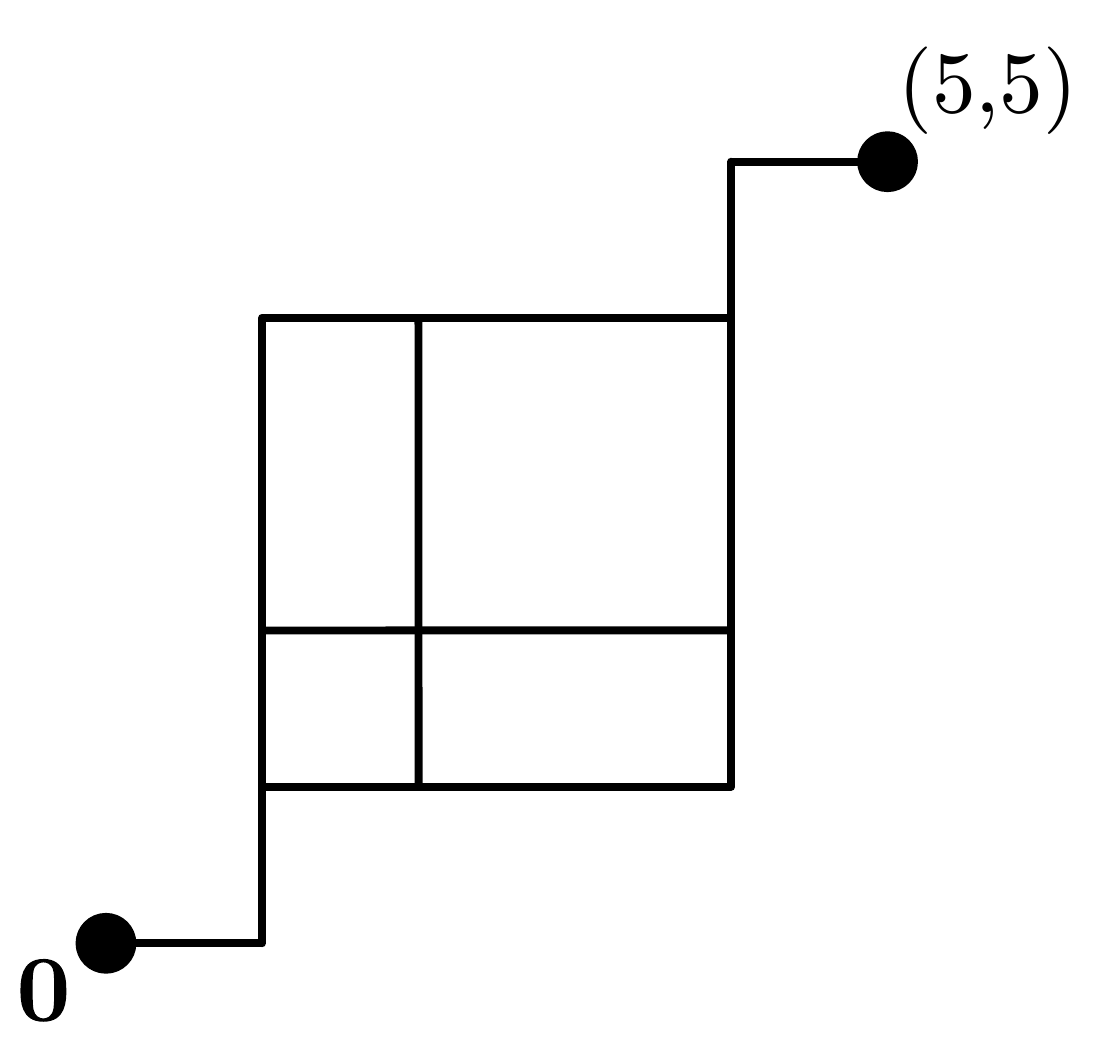}}%
\caption{Link-preserving DCs of the  window. A sequence of LPDCs is presented from \protect\subref{subfig:window1}-\protect\subref{subfig:window-collapse5}. The directed Euclidean cubical complex in \protect\subref{subfig:window-collapse2}
comes from performing several vertex LPDCs to remove the two-cubes along the border of $K$.
In \protect\subref{subfig:window-collapse2}-\protect \subref{subfig:window-collapse4}, the LPDC
pairs $(\tau, \sigma)$ are highlighted in purple and blue respectively. The result of the
sequence of LPDCs is a graph of vertices and edges that more clearly illustrates the
dihomotopy classes of dipaths in the dipath space.}
    \label{fig:window}
\end{figure}

    Let $K$ be the $5\times 5$ grid with the
    following two-cube interiors removed:  $[(1,1), (2,2)]$, $ [(3,1),(4,2)]$, $ [(1,3),(2,4)]$,  $[(3,3),(4,4)]$.
    See \figref{window}(a). $K$ has disconnected past links at the vertices
    $(2,2)$, $(4,2)$, $(2,4)$, $(4,4)$ so $K$ does not satisfy~\corref{contractible-collapse}
    or~\corref{connected-collapse}. Observe that $\di{K}_{\zero}^{(5,5)}$ has
    six connected components. We can perform a sequence of LPDCs that preserves
    the dihomotopy classes of dipaths between $\zero$ and $(5,5)$ at each step. First,
    we apply vertex LPDCs to remove the two-cubes along the border. Then we can
    apply four edge LPDCs and one vertex LPDC to get a graph of vertices
    and edges. This graph more clearly illustrates the six dihomotopy classes of dipaths in
    $\di{K}_{\zero}^{(5,5)}$.
\end{example}

In summary, LPDCs preserve the connectedness and/or contractability of dipath spaces
starting at the minimum vertex as long as $K$ has all reachable vertices and all dipath
spaces starting at the  minimum vertex in $K$ connected and/or contractible to begin with.
If $K$ does not have these properties, the first step could be to remove all unreachable vertices
and cubes before collapsing. In the next section, it will become clear that this will not suffice, if
the dipath spaces are not all connected or contractible.

\section{Discussion}\label{sec:discussion}

LPDCs preserve spaces of dipaths in many examples (see
\secref{pathspaces}), in particular, if they are all trivial in the sense of either all connected or all contractible and the directed Euclidean cubical complex is reachable from the minimum vertex.
However, LPDCs do not always preserve spaces of dipaths. We discuss some of those instances here.
One limitation of LPDCs is that the number of components may increase after an LPDC as we saw in \exref{bowlingball} or, as we see in
%collapsed directed Euclidean complex $K'$ with more or less connected components than it previously had in $K$.
%In \exref{bowlingball}, $\di{K'}_{\zero}^{(5,5)}$ has more connected components
%than $\di{K}_{\zero}^{(5,5)}$.
 \exref{badwindow}, they may decrease.
% decrease the number of connected components in a dipath space.

\begin{example}[A Sequence of LPDCs of the Window That Decreases the Number of Connected
    Components of the Dipath Space]\label{ex:badwindow}
    Consider $K$ as given in~\exref{window}. After applying vertex LPDCs that
    remove the two-cubes on the border of $K$, we can apply an LPDC to the edge~$[(2,4),(3,4)]$.
    Now $\di{K'}_{\zero}^{(5,5)}$ has five connected components;
    whereas, the dipath space~$\di{K}_{\zero}^{(5,5)}$ has six connected components. See \figref{badwindow}.
    \begin{figure}[th]
        \centering
        \subfloat[Initial Complex\label{subfig:window-badcollapse1}]{%
            \includegraphics[height=1.2in]{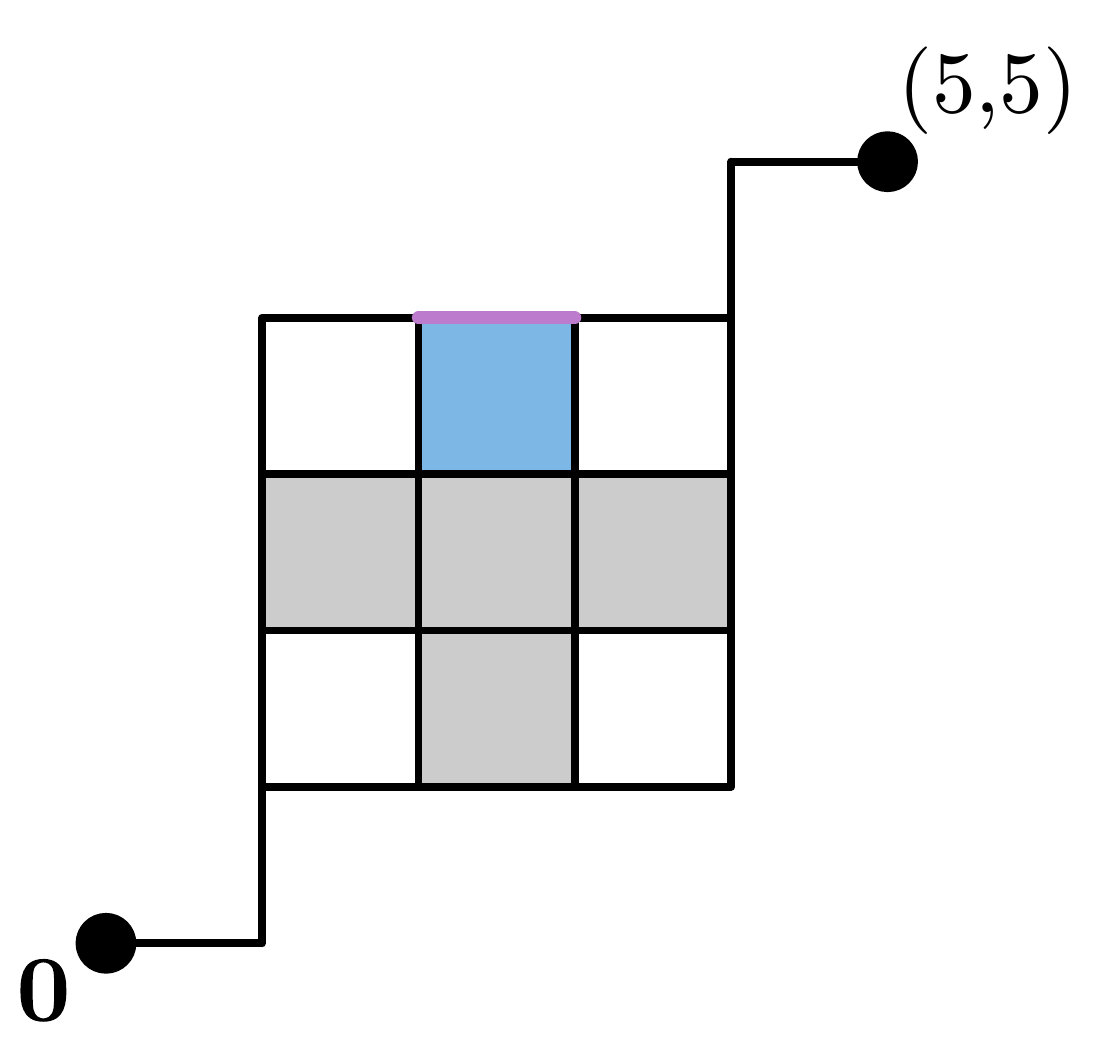}%
        }\hfil
        \subfloat[After Collapse\label{subfig:window-badcollapse2}]{%
            \includegraphics[height=1.2in]{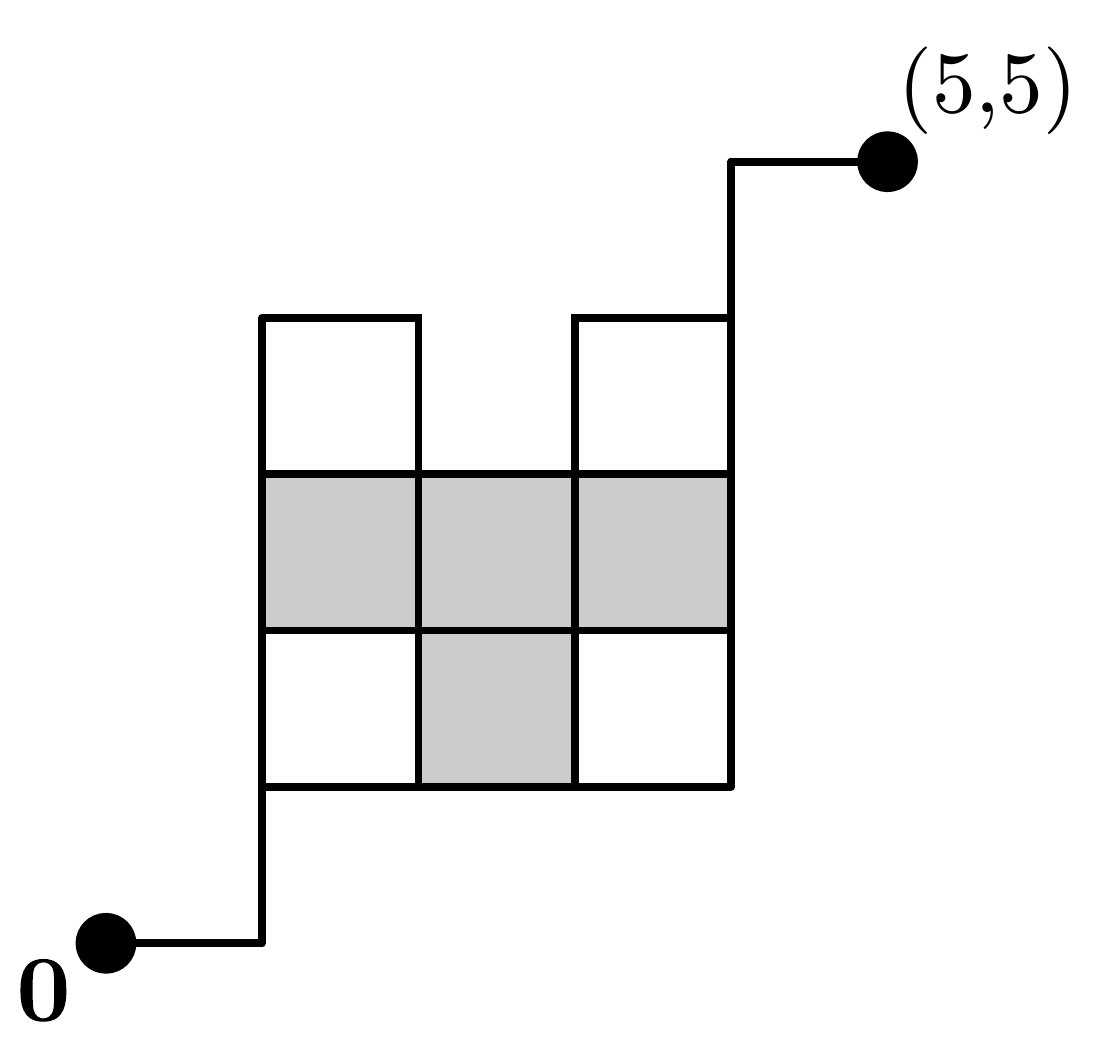}%
        }
        \caption{The LPDC for the window that changes dipath space. The
        LPDC of the edge $[(2,4),(3,4)]$ changes the dipath space
        between~$\zero$ to $(5,5)$ from having six connected components to five connected components.}
        \label{fig:badwindow}
    \end{figure}
    This example shows that there are both ``good" and ``bad" ways to apply a
    sequence of LPDCs to a directed Euclidean
    cubical complex. As illustrated in~\exref{window}, there exists a sequence of
    LPDCs that preserves the six connected components in $\di{K}_{\zero}^{(5,5)}$. However,
    if we perform a sequence of LPDCs that removes the edge $[(2,4),(3,4)]$ as in this
    example, then we get a directed Euclidean cubical complex that does not preserve the
    dihomotopy classes of dipaths in $\di{K}_{\zero}^{(5,5)}$.
\end{example}

\exref{badwindow} illustrates the need to investigate other properties if we want to preserve
dipath spaces
% between the minimum and final\brittany{the only place final
%vertices are mentioned is in the abstract and in the discussion} vertex
when performing an LPDC.

In \exref{bowlingball}, the problem was the existence of unreachable vertices. In \exref{badwindow},
the vertex $(2,4)$ is a \emph{deadlock} after the LPDC: only trivial dipaths
initiate from there;
whereas, before collapse, that was not the case. This seems to suggest that the introduction of
new deadlocks should not be allowed; in practice, this would require an
extra---but computationally easy---check on vertices of $\sigma$.

In the non-directed setting, if $K'$ is obtained from $K$ by collapsing a collapsing pair $(\tau, \sigma)$, then not only is the inclusion of $K'$ in $K$ a homotopy equivalence.
$K'$ is a deformation retract of $K$. The following example removes any hope of such a result in the directed setting:

\begin{example}[LPDC of the Four-Cube With No Directed Retraction
    to the Collapsed Complex]\label{ex:test}
    Let $(I^4,\mathcal{I}^4)$ be the standard unit four-cube. Let $\tau$ be the
    vertex $(1,1,0,0)$, and $\sigma$ be the cube~$[\zero,\one]$.
    Since $\tau$ is free and not the minimum vertex of $\sigma$,
    the pair~$(\tau, \sigma)$ is an LPDC pair.
    Thus, let $(K',\cubes{K'})$ be the collapsed complex. Next, we show that there is no directed
    retration, i.e., no directed map from~$I^4$ to~$K'$ that is the identity on~$K'$.

    Suppose, for a contradiction, that $f: I^4 \to K'$ is such a
    directed retraction.
    Let $\vecp_1=(0,1,0,0)$, $\vecp_2=(1,0,0,0)$, $\vecq_1=(1,1,1,0)$, and \mbox{$\vecq_2=(1,1,0,1)$.}
    By the product order on~$\R^4$, we have~$\vecp_1,\vecp_2 \preceq \tau$ and $\tau \preceq \vecq_1,\vecq_2$.
    Since the points~$\vecp_1$, $\vecp_2$, $\vecq_1$, and~$\vecq_2$ are vertices
    of~$I^4$ and are not equal to $\tau$, we also know
    that~$\vecp_1$,~$\vecp_2$,~$\vecq_1$, and~$\vecq_2$ are points in~$K'$.
    Since $f$ is a directed retraction, we have that~$\vecp_1 = f(\vecp_1)\preceq f(\tau)$
    and that~$\vecp_2 = f(\vecp_2)\preceq f(\tau)$.
    Similarly, we obtain that~$f(\tau)\preceq f(\vecq_1)=\vecq_1$
    and that~$f(\tau)\preceq f(\vecq_2)=\vecq_2$.

    Let $x_1,x_2,x_3,x_4 \in I$ such that $f(\tau) = (x_1,x_2,x_3,x_4)$.
    Then,
    \begin{align*}
        \vecp_1\preceq f(\tau)\Rightarrow x_2 &\geq 1
            \text{ and hence } x_2=1,\\
        \vecp_2\preceq f(\tau)\Rightarrow x_1 &\geq 1
            \text{ and hence } x_1=1,\\
        f(\tau)\preceq\vecq_1\Rightarrow x_4  &\leq 0
            \text{ and hence } x_4=0,\\
        f(\tau)\preceq \vecq_2\Rightarrow x_3 &\leq 0
            \text{ and hence } x_3=0.
    \end{align*}
    Thus,
    $f(\tau)=(1,1,0,0)=\tau$, which is not in $K'$ and hence a contradiction.
    In fact, this argument extends to $(I^k,\mathcal{I}^k)$ for $k \geq 4$.

    As further evidence that such a $(\tau,\sigma)$-collapse does not preserve
    the directed topology, consider the spaces of dipaths in $(I^4,\cubes{I}^4)$
    and $(K',\cubes{K}')$.  We would need dipaths in the original
    space to map to dipaths in the collapsed space.  However, notice that
    the dipath from $\vecp_1$ to
    $\vecq_1$ through~$\tau$ cannot be mapped to a dipath in~$(K',\cubes{K}')$.
\end{example}

We observe that vertex LPDCs appear to not introduce the problems of unreachability and deadlocks. These observations
lead us to suspect that studying unreachability, deadlocks, and vertex LPDCs can help us better understand
when LPDCs preserve and do not preserve dipath spaces between the minimum
and a given vertex. We leave this as future work.

In summary, we provide an easy criterion for determining when we have an LPDC
pair,
as well as discuss various settings for when LPDCs preserve spaces of dipaths.
Fully understanding when LPDCs preserve
spaces of dipaths between two given vertices is a step towards
developing algorithms that
compress directed Euclidean cubical complexes and preserve directed topology.

%\camera{brittany - check proof env.}

\begin{acknowledgement}
    This research is a product of one of the working groups at the
    Women in Topology (WIT) workshop at MSRI in November 2017. This workshop was
    organized in partnership with MSRI and the Clay Mathematics Institute, and was
    partially supported by an AWM ADVANCE grant (NSF-HRD 1500481).

    This material is based upon work supported by the US National Science Foundation under
    grant No.\  DGE 1649608 (Belton) and  DMS~1664858 (Fasy),
    as well as the Swiss National Science Foundation under grant No.\
    200021-172636 (Ebli).

    We thank the Computational Topology and Geometry (CompTaG) group at Montana State
    University for giving helpful feedback on drafts of this work.
\end{acknowledgement}

%\clearpage
\bibliographystyle{plain}
\bibliography{sufficient-conditions-arxiv-ver2}

\end{document}